\theoremstyle{Definition}
\numberwithin{equation}{section}
\newtheorem{proposition}{Proposition}
\newtheorem{theorem}{Theorem}
\newtheorem{corollary}{Corollary}
\newtheorem{lemma}{Lemma}
\newtheorem{remark}{Remark}
\newtheorem{definition}{Definition}
\newtheorem{example}{Example}
\DeclareRobustCommand{\secsym}{\ifmmode\mathsection\else\textsection\fi}
\def\S{{\mathcal{S}}}
\def\Res{{\mathrm{Res}}}
\def\para{\vspace{4 mm}}
\def\para{\vspace{2.5mm}}
\def\cP{{\mathcal P}}
\def\cQ{{\mathcal Q}}
\def\gcd{{\rm gcd}}
\def\S{{\mathcal S}}
\def\cS{{\mathcal S}}
\def\cR{{\mathcal R}}
\def\Resultant{{\rm Res}}
\def\Content{{\rm Content}}
\def\Primpart{{\rm Primpart}}
\def\mult{{\rm mult}}
\def\deg{{\rm deg}}
\def\content{{\rm Content}}
\def\proj2{\mathbb{P}^2(\mathbb{K})}
\def\projtres{\mathbb{P}^3(\mathbb{K})}
\def\myS{\mathscr{S}}
\def\myB{\mathscr{B}}
\def\myC{\mathscr{C}}
\def\myL{\mathscr{L}}
\def\myG{\mathscr{G}(\projtres)}
\def\myGG{\myG^\star}
\def\myGS{\mathscr{G}(\proj2)}
\def\cJ{\mathcal{J}}
\def\LC{\mathrm{LC}}
\def\cL{\mathcal{L}}
\def\cA{\mathcal{A}}
\def\cM{\mathcal{M}}
\def\K{\mathbb{K}}
\def\cT{\mathcal{T}}
\def\mapdeg{\mathrm{degMap}}
\def\betap{\mult(\myB(\cP))}
\def\betas{\mult(\myB(\cS))}
\def\betaq{\mult(\myB(\cQ))}
\def\oz{{\overline z}}
\def\ot{{\overline t}}
\def\ox{{\overline x}}
\def\oy{{\overline y}}
\def\myX{\overline{\mathrm{X}}}
\def\myY{\overline{\mathrm{Y}}}
\begin{document}

\title{On the base point locus of surface parametrizations: formulas and consequences}

\author{David A. Cox\\Department of Mathematics \& Statistics\\ Amherst College\\ Amherst, MA 01002 USA\\
dacox@amherst.edu\\
Sonia P\'erez-D\'{\i}az and J. Rafael Sendra\\
Dpto.\ de F\'{\i}sica y Matem\'aticas \\
        Universidad de Alcal\'a \\
      E-28871 Madrid, Spain  \\
sonia.perez@uah.es, rafael.sendra@uah.es}
\date{August 18, 2020}          
\maketitle

\begin{abstract}
This paper shows that the multiplicity of the base points locus of a projective rational surface parametrization can be expressed as the degree of the content of a univariate resultant.  As a consequence, we get a new proof of the degree formula relating the degree of the surface, the degree of the parametrization, the base points multiplicity, and the degree of the rational map induced by the parametrization. In addition, we extend both formulas to the case of dominant rational maps of the projective plane and describe how the base point loci of a parametrization and its reparametrizations are related. As an application of these results, we explore how the degree of a surface reparametrization is affected by the presence of base points.
\end{abstract}

\noindent
{\it Keywords:} Base point,  Hilbert-Samuel multiplicity, surface parametrization, reparame\-trization, parametrization degree, surface degree.

\para

\section{Introduction}\label{S-intro}
Let $\mathrm{X}, \mathrm{Y}$ be irreducible projective varieties of the same dimension, and consider a dominant rational map $\Phi=(\Phi_1:\cdots:\Phi_m): \mathrm{X}\dashrightarrow \mathrm{Y}$, where the $\Phi_i$ are homogeneous polynomials of the same degree and $\gcd(\Phi_1,\ldots,\Phi_m)=1$. The base points of $\Phi$ are the elements in the subvariety of $\mathrm{X}$ where $\Phi$ is not defined; that is, the projective variety defined by $\{\Phi_1,\ldots,\Phi_m\}$.  In our case, since we are mainly interested in projective rational parametrizations, $\mathrm{X}$ is the whole projective space; i.e., $\mathrm{X}=\mathbb{P}^{n}$. If $n=1$, then $\mathrm{Y}$ is a curve and $\Phi$ does not have base points. For the surface case, i.e., $n=2$, the base point subvariety is either empty or zero-dimensional. If $n>2$, the dimension of base point locus can be positive.

\para

Base points play an important role in the analysis of unirational varieties, since the explanation of many degenerate behaviours are often based on them. Some examples are, for instance, the  study of the degree of a rational surface by means of the degree of the polynomials in its rational parametrizations (see e.g.\ \cite{Cox2003,Chen,SchichoIssac,Schicho2006,Wang2004,Sed2003}), or the surjective cover of a surface by means of the images of finitely many rational parametrizations (see e.g.\ \cite{CSV,SSV14,SSV}). As a consequence, many authors have studied base points (see  e.g.\ \cite{AdkinsHoffmanWang2005, Chen2005,PS13,Sed1990,Wang2004}).

\para

In this paper, we deal with the problem of \textit{properly counting} the number of base points of  projective rational surface parametrizations. 
This question has been treated by many  authors. In \cite{Cox2003}, the problem is addressed for birational  triangular parametrizations, and in
 \cite{Sed2003} the case of  tensor product surfaces is established; see also, \cite{Chen} and \cite{Wang2004}. In addition, \cite{SchichoIssac} introduces the notion of blow up of the base locus, and referring to \cite{Conforto} presents a formula for the case of a birational parametrization. To our knowledge, the first general answer to the problem, in the sense of requiring no additional hypothesis such as the birationality of the parametrization or any particular   structure of the parametrization, appears in \cite{Cox2001}, where the degree formula is proved using Segre classes from Fulton's book \cite{Fulton}. Another proof that applies B\'ezout’s Theorem to two generic linear combinations of the polynomials in the parametrization and uses reduction ideals to relate this to the Hilbert-Samuel multiplicity of the base points appears in an unpublished lecture of the first author \cite{CoxWebPage}.

 \para

In this paper, also without assuming additional hypotheses, we present a formula that relates the multiplicity of the base point locus with the content of a univariate resultant (see Theorems \ref{theorem-betaW1LW2L} and \ref{theorem-BetaP-K1K2}). Furthermore, as a consequence of this relationship, we present an elementary proof of the degree formula (see Theorem \ref{theorem-formulaP}) relating the degree of the surface, the degree of the parametrization, the multiplicity of the base locus and the cardinality of the generic fiber of the parametrization. The proof in this paper was found independently of any previous work. Our methods are based on the intersection theory of curves in combination with well-known results from elimination theory, especially the properties of resultants (see \cite{SWP} and \cite{Walker}).

\para

The usual definition of the multiplicity of the base point locus uses Hilbert-Samuel multiplicities, which can be challenging to compute individually.  In Corollary \ref{cor-HSW1W2K1K2}, we provide a simple computational method to determine the sum of these multiplicities.

\para

Finally, we also state similar formulas for the case of rational maps between the projective plane (see Theorems  \ref{theorem-betaV1LV2L}, \ref{theorem-BetaP-J1J2}   and \ref{theorem-formulapbS}). Moreover, as a consequence, we study the variation of the base locus under reparametrizations (see Theorems \ref{T-propertyP1}, \ref{T-propertyP2}) as well as the behavior of the parametrization degree under reparametrizations (see Theorems \ref{T-propertyP1} and \ref{T-propertyP0} as well as Corollary \ref{cor-noBPQ}).

 \para

For this purpose, in Section \ref{sec-BP-parametrizations} we associate to the given parametrization two plane projective curves, defined over the algebraic closure of a transcendental field extension of the ground field (see \eqref{eq-W} and \eqref{eq-WL}).  Our definition of multiplicity is tailored to our needs and gives a good notion of the multiplicity of the base locus.  In Proposition \ref{prop-HSmult} and Corollary \ref{cor-HSmult}, we show that this agrees with the usual definition via Hilbert-Samuel multiplicity.  Our definition enables us to express the multiplicity of the base locus in terms of  the content (w.r.t.\ the introduced transcendental elements) of the resultant of the two polynomials defining the curves (see Theorem \ref{theorem-betaW1LW2L}). In a second step, we show that the curves can be simplified by introducing fewer transcendentals in the field extension (see \eqref{eq-K1K2g}), so that for almost all projective transformations the content of the resultant of these two new curves also yields the multiplicity of the base locus (see Theorem \ref{theorem-BetaP-K1K2}).  From here, we carefully analyze the primitive part of the resultant of the new curves and relate it to the degree of the surface and the cardinality of the generic fiber of the parametrization (see Lemma \ref{lemma-formula}).  Then the degree formula stated in Theorem \ref{theorem-formulaP} follows immediately.

  \para

In Section \ref{sec-formula-birational}, we show how the results in Section \ref{sec-BP-parametrizations} can be adapted to dominant rational maps from $\mathbb{P}^2$ onto $\mathbb{P}^2$ (see Theorems \ref{theorem-betaV1LV2L}, \ref{theorem-BetaP-J1J2} and \ref{theorem-formulapbS}). Finally, in Section \ref{sec-composition}, we study the behaviour of the base loci of two parametrizations of the same surface, when one is the reparametrization of the other;  see Theorems \ref{T-propertyP1}, \ref{T-propertyP0} and \ref{T-propertyP2} as well as Corollaries \ref{cor-betapqs} and \ref{cor-noBPQ}. In addition, we apply the results developed in Section \ref{sec-composition} to study how the degree of a parametrization varies under the presence of base points. More precisely, let $\cP,\cQ$ be curve  parametrizations related by $\cP=\cQ \circ \cS$, where $\cS$ is a non-constant rational function. Then $\deg(\cP) = \deg(\cQ)\, \deg(\cS)$. However,  for surface parametrizations, this equality is not true in general. In this paper a characterization of the equality is given when $\cP$, $\cQ$ are surface parameterizations and $\cS$ is a dominant rational map of $\mathbb{P}^2$. We show how this characterization is directly related with the base points of $\cQ$ and $\cS$ (see Theorem \ref{T-propertyP1}). Furthermore, we prove that the degree of the composition decreases, i.e., $\deg(\cP) <\deg(\cQ)\,\deg(\cS)$, if and only if
 a certain polynomial gcd is non-trivial, a fact that can be geometrically interpreted by asking
 a base point of $\cQ$ to be in the image of a curve via the rational map $\cS$.  We conclude that if $\cQ$ has no base points, then $\deg(\cP) = \deg(\cQ)\,\deg(\cS)$ (see Corollary \ref{cor-noBPQ}).

\para

The paper concludes with an appendix that explains how the proof of the degree formula given in Theorem~\ref{theorem-formulaP} relates to the unpublished argument sketched in \cite{CoxWebPage}.

\para

\noindent {\bf Notation.} Throughout this paper, we will use the following notation:
\begin{itemize}
\item $\K$ is an algebraically closed field of characteristic zero.
\item For a rational map
\[ \begin{array}{cccc}
\cM: & \mathbb{P}^{k_1}(\K) & \dashrightarrow & \mathbb{P}^{k_2}(\K) \\
     &     \ot=(t_1:\cdots:t_{k_{1}+1}) & \longmapsto & (M_1(\ot):\cdots: M_{k_2+1}(\ot)),
\end{array}
\]
where the non-zero $M_i$ are homogenous polynomial in $\ot$ of the same degree, we denote by $\deg(\cM)$ the degree $\deg_{\ot}(M_i)$, for $M_i$ non-zero, and by $\mapdeg(\cM)$ the degree of the map $\cM$; that is,  the cardinality of the generic fiber of $\cM$ (see e.g.\ \cite{Harris:algebraic}).
\item Let $f\in \mathbb{L}[t_1,t_2,t_3]$ be homogeneous and non-zero, where $\mathbb{L}$ is a field extension of $\K$. Then $\myC(f)$ denotes the projective plane curve defined by $f$ over the algebraic closure of $\mathbb{L}$.
\item $\mathscr{G}(\mathbb{P}^{k}(\K))$ denotes the set of all projective transformations of $\mathbb{P}^{k}(\K)$, and $\mathscr{G}(\mathbb{P}^{k}(\K))^*$ denotes the set of those transformations in $\mathscr{G}(\mathbb{P}^{k}(\K))$ whose matrix representation is of the form
\[ \left(\begin{array}{c|l} A & \mathbf{0}^T \\
\hline \mathbf{0} & 1 \end{array} \right),\]
where $\mathbf{0}=(0,\ldots,0)$.
\end{itemize}

\section{Formula for rational surface parametrizations}\label{sec-BP-parametrizations}

In this section, we consider a projective rational surface  $\myS\subset \projtres$ and a
rational parametrization of $\myS$, namely,
\begin{equation}\label{eq-Param}
\begin{array}{llll}
\mathcal{P}: & \mathbb{P}^{2}(\K) &\dashrightarrow &\myS \subset \mathbb{P}^{3}(\K) \\
& \ot & \longmapsto & (p_1(\ot):\cdots:p_4(\ot)),
\end{array}
\end{equation}
where $\ot=(t_1,t_2,t_3)$ and  the $p_i$ are homogenous polynomials of the same degree such that $\gcd(p_1,\ldots,p_4)=1$. We \textbf{assume} that $p_4$ is not zero.

\para

We will deal with the multiplicity of intersection of curves by means of resultants. For this purpose, in the sequel, we will  \textbf{assume} that  $(0:0:1)\not\in \myC(p_i)$ for all $i\in \{1,\ldots,4\}$.

 \para

The two hypotheses imposed above are technicalities for our reasoning.  We  will see in Remark \ref{rem-hipotesis} that the final formula in Theorem \ref{theorem-formulaP} is also true when they do not hold.

\para

\begin{definition}\label{def-base-point-P}
 A \textsf{base point of $\mathcal{P}$} is an element  $A\in \mathbb{P}^{2}(\K)$ such that $\mathcal{P}(A) = \mathbf{0}$.  We denote by $\myB({\cP})$ the set of base points of ${\cP}$.
 \end{definition}

 \para

We observe that $\myB({\cP})$ consists of the intersection points of the  projective plane curves $\myC(p_i)$.  That is,
 \[ \myB({\cP})=\bigcap_{i=1}^{4} \myC(p_i). \]
Note that $\myB(\cP)$ is either empty or finite since $\gcd(p_1,\ldots,p_4)=1$.

\para

We introduce the  following auxiliary polynomials:
\begin{equation}\label{eq-W}
\begin{array}{lll}
W_1(\ox,\ot):=\sum_{i=1}^{4} x_i\, p_i(t_1,t_2,t_3)
 \\ \noalign{\smallskip}
 W_2(\oy,\ot):=\sum_{i=1}^{4} y_i\, p_i(t_1,t_2,t_3),
 \end{array}
 \end{equation}
 where $x_i, y_i$ are new variables. We will work with the projective  plane curves $\myC(W_{i})$ in $\mathbb{P}^{2}(\mathbb{F})$, where $\mathbb{F}$ is algebraic closure of
$\mathbb{K}(\ox,\oy).$ In this situation, we have the following notion.

\para

\begin{definition}\label{def-multiplicity-BP}
We define the \textsf{multiplicity of a
base point} $A\in \myB(\cP)$ as $\mult_{A}(\myC(W_1),\myC(W_2))$, that is, as the multiplicity of intersection at $A$ of $\myC(W_1)$ and $\myC(W_2)$.

In addition, we define the \textsf{multiplicity of the base points locus of ${\cP}$},  denoted $\mult(\myB(\cP))$, as
\begin{equation}\label{eq-multBP}
\mult(\myB(\cP)):=\sum_{A\in \myB({\cP})} \mult_A(\myC(W_{1}),\myC(W_{2})).
\end{equation}
\end{definition}

\para

 A base point $A$ also has the \textsf{Hilbert-Samuel multiplicity} $e(I_A,R_A)$ (see \cite[4.6]{BH}), where
\begin{equation}\label{eq-HSsetup}
I_A := \langle \tilde{p}_1, \tilde{p}_2, \tilde{p}_3, \tilde{p}_4\rangle \subset   R_A={\cal O}_{{\Bbb P}^2(\mathbb{K}),A}
\end{equation}
and $\tilde{p}_i$ is a local equation of $p_i$ near $A$.  This agrees with the multiplicity defined in Definition \ref{def-multiplicity-BP}, as we now show.

\para

\begin{proposition}\label{prop-HSmult}
For $A \in \myB(\cP)$, we have $e(I_A,R_A) = \mult_{A}(\myC(W_1),\myC(W_2))$.
\end{proposition}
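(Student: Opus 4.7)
The plan is to invoke the classical theory of reductions of ideals: after base change to $\mathbb{F}$, $\langle W_1,W_2\rangle$ will turn out to be a minimal reduction of $I_A$, and the Hilbert--Samuel multiplicity of an $\mathfrak{m}$-primary ideal equals that of any of its reductions.  Since a minimal reduction of an $\mathfrak{m}$-primary ideal in a $2$-dimensional regular local ring is generated by a regular sequence, its Hilbert--Samuel multiplicity coincides with the colength of the ideal --- which is, by definition, the intersection multiplicity at $A$.

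Concretely, I would first extend scalars.  Set $R_A^{\mathbb{F}}:=\mathcal{O}_{\mathbb{P}^2(\mathbb{F}),A}$, viewing $A$ as an $\mathbb{F}$-point via $\K\hookrightarrow\mathbb{F}$.  The natural map $R_A\to R_A^{\mathbb{F}}$ is faithfully flat local with $\mathfrak{m}_A R_A^{\mathbb{F}}=\mathfrak{m}_A^{\mathbb{F}}$, so the Hilbert--Samuel multiplicity is preserved: $e(I_A,R_A)=e(I_A R_A^{\mathbb{F}},R_A^{\mathbb{F}})$.  Next comes the key step: $\langle W_1,W_2\rangle\subset R_A^{\mathbb{F}}$ is a minimal reduction of $I_A R_A^{\mathbb{F}}$.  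Since $I_A R_A^{\mathbb{F}}$ is $\mathfrak{m}$-primary (as $A$ is an isolated base point) in a $2$-dimensional regular local ring, its analytic spread equals $2$.  The Northcott--Rees theorem then guarantees that two \emph{generic} $\mathbb{F}$-linear combinations of $\tilde p_1,\ldots,\tilde p_4$ generate a minimal reduction; and because $(x_1,\ldots,x_4)$ and $(y_1,\ldots,y_4)$ are algebraically independent over $\K$, they avoid every proper Zariski-closed condition defined over $\K$, in particular the bad locus of non-reductions.

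With this in hand, $\langle W_1,W_2\rangle$ is a parameter ideal in a $2$-dimensional regular local ring, hence a complete intersection, and so $e(\langle W_1,W_2\rangle,R_A^{\mathbb{F}})=\ell_{R_A^{\mathbb{F}}}\bigl(R_A^{\mathbb{F}}/\langle W_1,W_2\rangle\bigr)$.  This length equals $\mult_A(\myC(W_1),\myC(W_2))$ by the very definition of intersection multiplicity, and the reduction equality $e(I,R)=e(J,R)$ closes the chain.  The principal obstacle is the middle step: verifying that $W_1,W_2$ are ``generic enough''.  To avoid invoking Northcott--Rees as a black box, one can verify directly that there exists $n$ with $(I_A R_A^{\mathbb{F}})^{n+1}=\langle W_1,W_2\rangle\,(I_A R_A^{\mathbb{F}})^n$, which reduces to the non-vanishing of certain polynomial expressions in the $x_i,y_i$ with coefficients in $\K$ --- automatic from algebraic independence.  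The remaining details (flatness of the residue-field extension, equality of analytic spread with dimension for $\mathfrak{m}$-primary ideals in a regular local ring, and the colength identity for parameter ideals) are all standard and can be cited from \cite{BH}.
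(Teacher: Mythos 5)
Your proposal is correct and follows essentially the same path as the paper's proof: extend scalars to $\mathbb{F}$ (the paper compares Hilbert functions directly, you invoke faithful flatness, which is equivalent), use the theory of reductions of $\mathfrak{m}$-primary ideals together with algebraic independence of $\ox,\oy$ over $\K$ to see that $\langle W_1,W_2\rangle$ is a reduction of $I_{A,\mathbb{F}}$ (the paper delegates this to Theorem A.1 of \cite{BCD}, you cite Northcott--Rees directly), and finish with the colength formula for parameter ideals. The substance and the order of the steps match.
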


\para

\begin{proof}
Recall that we have the field extension $\mathbb{K} \subset \mathbb{F}$.  Since $W_1$ and $W_2$ are defined over the larger field $\Bbb{F}$, Definition \ref{def-multiplicity-BP} implies that
\[
\mult_{A}(\myC(W_1),\myC(W_2)) = \dim_\mathbb{F} {\cal O}_{{\Bbb P}^2(\mathbb{F}),A}/\langle \widetilde{W}_1,\widetilde{W}_2\rangle,
\]
where $\widetilde{W}_i$ is a local equation of $W_i$ near $A$.  In contrast,
$e(I_A,R_A)$ is defined over the base field $\mathbb{K}$.  Since ${\cal O}_{{\Bbb P}^2(\mathbb{K}),A}$ has dimension 2, the Hilbert-Samuel multiplicity satisfies
\[
\dim_\mathbb{K} R_{A}/I^{d+1}_A = \tfrac{1}{2} e(I_A,R_A) d^2 + \text{terms of lower degree in $d$}
\]
for $d \gg 0$ by the proof of Proposition 4.6.2(b) in \cite{BH}.  Over the larger field $\mathbb{F}$, we also have the Hilbert-Samuel multiplicity $e(I_{A,\mathbb{F}},R_{A,\mathbb{F}})$, where
\[
I_{A,\mathbb{F}} := \langle \tilde{p}_1, \tilde{p}_2, \tilde{p}_3, \tilde{p}_4\rangle \subset   R_{A,\mathbb{F}}:={\cal O}_{{\Bbb P}^2(\mathbb{F}),A}.
\]
Let us show that these Hilbert-Samuel multiplicities are equal. The key point is that for $A \in {\Bbb P}^2(\mathbb{K}) \subset {\Bbb P}^2(\mathbb{F})$, $I_{A,\mathbb{F}}$ and $R_{A,\mathbb{F}}$ are obtained from \eqref{eq-HSsetup} by tensoring with $\Bbb{F}$.  It follows easily that
\[
\dim_\mathbb{F} R_{A,\mathbb{F}}/I^{d+1}_{A,\Bbb{F}} = \dim_\mathbb{K} R_{A}/I^{d+1}_A,
\]
from which we conclude that $e(I_{A,\mathbb{F}},R_{A,\mathbb{F}}) = e(I_A,R_A)$.  Hence the proposition will follow once we prove
\[
e(I_{A,\mathbb{F}},R_{A,\mathbb{F}}) = \dim_\mathbb{F}
R_{A,\Bbb{F}}/\langle \widetilde{W}_1,\widetilde{W}_2\rangle.
\]

By Theorem A.1 of \cite{BCD}, we know that if $S_1$ and $S_2$ are generic linear combinations of $p_1,\dots,p_4$ over $\Bbb{F}$, then
\[
e(I_{A,\mathbb{F}},R_{A,\mathbb{F}}) = e(\langle \widetilde{S}_1,\widetilde{S}_2\rangle, R_{A,\Bbb{F}}).
\]
The proof uses the theory of reduction ideals developed in \cite[4.6]{BH}.  The  field $\Bbb{F}$ contains $x_1,\dots,x_4,y_1,\dots,y_4 \in \Bbb{F}$ that are algebraically independent over $\Bbb{K}$.  These give generic linear combinations $W_1$ and $W_2$, so that
\[
e(I_{A,\mathbb{F}},R_{A,\mathbb{F}}) = e(\langle \widetilde{W}_1,\widetilde{W}_2\rangle, R_{A,\Bbb{F}}).
\]
Since $\widetilde{W}_1,\widetilde{W}_2$ form a regular sequence (this follows from the proof of Lemma \ref{Prop-BasePointS} below), we can use to the well-known fact that for a complete intersection, the Hilbert-Samuel multiplicity is easy to compute:
\[
e(\langle \widetilde{W}_1,\widetilde{W}_2\rangle, R_{A,\Bbb{F}}) = \dim_\mathbb{F}
R_{A,\Bbb{F}}/\langle \widetilde{W}_1,\widetilde{W}_2\rangle.
\]
The proposition follows.
\end{proof}

\para

\begin{corollary}\label{cor-HSmult}
$\mult(\myB(\cP)) =\sum_{A\in \myB({\cP})} e(I_A,R_A)$.
\end{corollary}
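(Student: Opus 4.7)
The plan is to observe that this corollary is a direct consequence of the preceding proposition, combined with the definition of $\mult(\myB(\cP))$. There is essentially no real content beyond the substitution, so the ``proof'' amounts to chaining together two things that are already established.

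First I would recall Definition \ref{def-multiplicity-BP}, which spells out
\[
\mult(\myB(\cP))=\sum_{A\in \myB({\cP})} \mult_A(\myC(W_{1}),\myC(W_{2})).
\]
Then I would invoke Proposition \ref{prop-HSmult}, which gives the pointwise identity $\mult_{A}(\myC(W_1),\myC(W_2)) = e(I_A,R_A)$ for each $A \in \myB(\cP)$. Substituting this equality term-by-term into the sum above yields the claimed formula.

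There is no real obstacle here; the only ``step'' is to apply Proposition \ref{prop-HSmult} inside the finite sum indexed by $\myB(\cP)$. The work of the corollary was already performed in proving the proposition (reducing the Hilbert--Samuel multiplicity over $\K$ to that over $\mathbb{F}$, using the generic linear combinations $W_1,W_2$ as a reduction of $I_{A,\mathbb{F}}$, and exploiting the complete intersection property to identify $e(\langle \widetilde{W}_1,\widetilde{W}_2\rangle, R_{A,\mathbb{F}})$ with $\dim_{\mathbb{F}} R_{A,\mathbb{F}}/\langle \widetilde{W}_1,\widetilde{W}_2\rangle$). Accordingly, I would present the corollary's proof in one or two lines.
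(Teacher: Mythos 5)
Your proposal is correct and matches the paper's (implicit) argument exactly: the corollary is obtained by summing the pointwise equality $\mult_{A}(\myC(W_1),\myC(W_2)) = e(I_A,R_A)$ from Proposition \ref{prop-HSmult} over the finite set $\myB(\cP)$, using Definition \ref{def-multiplicity-BP}. The paper gives no separate proof for this corollary, confirming that term-by-term substitution is all that is required.
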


\para

In \cite[p.\ 189]{BH}, Bruns and Herzog note that computing Hilbert-Samuel multiplicities ``may be a painful and often impossible task.''
In the sequel, we will see how the sum of Hilbert-Samuel multiplicities in Corollary \ref{cor-HSmult}, when reinterpreted via \eqref{eq-multBP}, can be computed by means of a simple resultant.

For this purpose, for $L\in \myG$, we introduce the polynomials
\begin{equation}\label{eq-WL}
\begin{array}{lll}
 W_{1}^{L}(\ox,\ot) & := &\sum_{i=1}^{4} x_i\, L_i({\cP}(\ot))\in \mathbb{K}(\ox,\oy)[\ot]
 \\ \noalign{\smallskip}
 W_{2}^{L}(\oy,\ot) &:=&\sum_{i=1}^{4} y_i \, L_i({\cP}(\ot))\in
 \mathbb{K}(\ox,\oy)[\ot].
\end{array}
\end{equation}
Note that $W_{i}=W_{i}^{\mathrm{Id}}$, where $\mathrm{Id}$ is the identity map.  In the next proposition, we study some properties of these polynomials in relation with the base points.

\para

\begin{proposition}\label{Prop-BasePointS}
If $L\in \myG$, then:
\begin{enumerate}
\item $\myC(W_{1}^{L}), \myC(W_{2}^{L})$ have no common components.
\item If $P,Q\in \myC(W_{1}^{L})\cap \myC(W_{2}^{L})$ are colinear with $(0:0:1)$ and $P\in \proj2$, then $Q\in \proj2$.
\item $\myB({\cP})=\myC(W_{1}^{L}) \cap \myC(W_{2}^{L})\cap \proj2$.
\item If $A\in \myB({\cP})$, then $$\mult(A,\myC(W_{1}^{L}))=\mult(A,\myC(W_{2}^{L}))=\min\{\mult(A,\myC(p_i))\,|\, i=1,\ldots,4\}.$$
\item If $A\in \myB({\cP})$, then the tangents to $\myC(W_{1}^{L})$ at $A$ {\rm(}{\hskip-1pt}similarly to $\myC(W_{2}^{L})${\rm)}, with the corresponding multiplicities, are the factors in $\mathbb{K}[\ox,\ot]\setminus \mathbb{K}[\ox]$ of
    \[ \epsilon_1 x_1 T_1+\epsilon_2 x_2 T_2 +\epsilon_3 x_3 T_3+\epsilon_4 x_4 T_4,\]
    where $T_i$ is the product of the tangents, counted with multiplicities, of $\myC(L_i({\cP}))$ at $A$, and where $\epsilon_i=1$ if $\mult(A,\myC(L_i({\cP}))))=\min\{\mult(A,\myC(L_i({\cP})))\,|\, i=1,\ldots,4\}$ and $0$ otherwise.
\end{enumerate}
\end{proposition}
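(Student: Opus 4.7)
The plan is to exploit two basic facts: each $W_i^L$ is linear in one set of transcendentals ($W_1^L$ in $\ox$, $W_2^L$ in $\oy$), and the polynomials $Q_i := L_i \circ \cP$ for $i=1,\ldots,4$ satisfy $\gcd(Q_1,\ldots,Q_4) = 1$ in $\K[\ot]$, which follows from $\gcd(p_1,\ldots,p_4)=1$ and the invertibility of $L \in \myG$. All five statements will be deduced from this linearity and gcd condition together with the algebraic independence of $\ox, \oy$ over the algebraically closed field $\K$.

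For Part 1, I would work in $\K[\ox,\oy,\ot]$: if $h$ divides both $W_1^L$ and $W_2^L$, then since $W_1^L \in \K[\ox,\ot]$ and $W_2^L \in \K[\oy,\ot]$, the divisor $h$ must lie in $\K[\ox,\ot]\cap\K[\oy,\ot]=\K[\ot]$; matching the coefficient of each $x_i$ in $h\mid W_1^L$ forces $h\mid Q_i$ for every $i$, so $h\mid\gcd(Q_1,\ldots,Q_4)=1$. Extending to $\mathbb{F}[\ot]$ via a Galois-conjugate argument shows $W_1^L$ and $W_2^L$ remain coprime there, so $\myC(W_1^L)$ and $\myC(W_2^L)$ share no common component. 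For Part 3, a point $A \in \proj2$ lies on $\myC(W_1^L)$ iff $\sum_i x_i Q_i(A) = 0$ in $\mathbb{F}$; since the $x_i$ are $\K$-linearly independent in $\mathbb{F}$ and $Q_i(A)\in\K$, this forces $Q_i(A)=0$ for every $i$, equivalently (using $L$ invertible) $\cP(A)=\mathbf{0}$. Combined with the analogous condition for $W_2^L$, this yields $\myC(W_1^L)\cap\myC(W_2^L)\cap\proj2=\myB(\cP)$.

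For Part 2, take $P=(a:b:c)\in\proj2$ and $Q=(a:b:c')$ with $c'\in\mathbb{F}$, both on $\myC(W_1^L)\cap\myC(W_2^L)$; the hypothesis $(0:0:1)\notin\myC(p_i)$ excludes $(a,b)=(0,0)$. Substitution gives
\[
W_1^L(\ox,a,b,t_3)=\sum_{i=1}^{4}x_i\,Q_i(a,b,t_3)\in\K[\ox][t_3],
\]
which is not identically zero in $t_3$ (otherwise the entire line $t_1=a,\,t_2=b$ would lie in $\myB(\cP)$, contradicting its finiteness). Hence $c'$ is a root of a nonzero polynomial in $\K(\ox)[t_3]$, so $c'\in\overline{\K(\ox)}$; the symmetric argument with $W_2^L$ gives $c'\in\overline{\K(\oy)}$. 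Since $\ox,\oy$ are algebraically independent over the algebraically closed $\K$, the classical identity $\overline{\K(\ox)}\cap\overline{\K(\oy)}=\K$ forces $c'\in\K$, whence $Q\in\proj2$.

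For Parts 4 and 5, I would work in local affine coordinates centered at $A$ and write each $Q_i=T_i+(\text{higher order})$ with $T_i$ its leading form of degree $m_i:=\mult(A,\myC(Q_i))$. The Taylor expansion of $W_1^L=\sum_i x_i Q_i$ at $A$ then begins with
\[
\sum_{i:\, m_i=m}x_i T_i\;=\;\sum_{i=1}^{4}\epsilon_i\,x_i T_i,\qquad m:=\min_i m_i,
\]
and this leading form is nonzero: the nonzero forms $T_i$ (for $i$ with $m_i=m$) are paired with the $\K$-linearly independent transcendentals $x_i$, precluding cancellation. Therefore $\mult(A,\myC(W_1^L))=m$, and the equality $m=\min_j\mult(A,\myC(p_j))$ follows by writing each $Q_i$ as a $\K$-linear combination of the $p_j$ (giving $m_i\geq\min_j\mult(A,\myC(p_j))$), and conversely via $L^{-1}$; this is Part 4. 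Part 5 is then immediate, since the tangent cone of $\myC(W_1^L)$ at $A$ is exactly $\sum_i\epsilon_i x_i T_i$, whose tangents (with multiplicity) are precisely its factors involving $\ot$, any factor purely in $\K[\ox]$ being an extraneous $\ox$-content. The main obstacle is the algebraic-independence step in Part 2, where the identity $\overline{\K(\ox)}\cap\overline{\K(\oy)}=\K$ must be combined with the non-vanishing of $W_1^L(\ox,a,b,t_3)$ (itself a consequence of the finiteness of $\myB(\cP)$) to conclude $c'\in\K$.
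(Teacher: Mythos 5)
Your proof is correct and takes essentially the same approach as the paper: Part 1 via reducing a common factor to a divisor of $\gcd(L_1(\cP),\ldots,L_4(\cP))=1$, Part 2 via the observation that the coordinate of a common intersection point on a line through $(0:0:1)$ is algebraic over both $\K(\ox)$ and $\K(\oy)$ and hence lies in $\K$, Part 3 via the $\K$-linear independence of the $x_i$, and Parts 4--5 via the leading form of $W_i^L$ in local coordinates at $A$. The only cosmetic difference is that in Part 2 you invoke $\overline{\K(\ox)}\cap\overline{\K(\oy)}=\K$ directly, where the paper packages the same fact as a gcd computation on the restrictions of $W_1^L$ and $W_2^L$ to the line.
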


\begin{proof} Without loss of generality, we may assume that $L$ is indeed the identity map, and hence it is enough to prove the result for $W_1,W_2$.

\noindent (1) If the two curves share a component, then $1\neq B:=\gcd(W_1,W_2)\in \K[\ot]$. Then
$B$ divides $\gcd(p_1,\ldots,p_4)=1$, a contradiction.

\medskip

\noindent (2) Let $\mathbb{F}$ be the algebraic closure of $\K(\ox,\oy)$. Suppose that $Q\in \mathbb{P}^{2}(\mathbb{F})\setminus \proj2$. The line $\myL$ passing through $P=(\lambda:\mu:\rho)$ and $(0:0:1)$ is  $\lambda t_2=\mu t_1$, with $\lambda,\mu\in \K$.
We assume w.l.o.g.\ that $\mu\neq 0$ and hence $\myL$ is of the form $t_1=\gamma t_2$ for some $\gamma\in \K$. If $Q$ is at infinity, i.e., $Q=(a:b:0)$, then since $a=\gamma b$, we have $Q=(\gamma:1:0)\in \proj2$. So we can assume that $Q$ is affine. Consider the polynomials $A_i(t_2,t_3):=W_{i}(\gamma t_2: t_2: t_3)$. Since $Q\in \myC(W_1)\cap \myL$, $Q$ can be expressed as
\[ Q=(\gamma \alpha :\alpha:1) \]
where $\alpha$ is a root of $A_1$; note that $\alpha$ is in the algebraic closure of $\K(\ox)$. Similarly, since $Q\in \myC(W_2)\cap \myL$, then $Q$ is also expressible as
\[
Q=(\gamma \beta :\beta :1)
\]
where $\beta$ is a root of $A_2$; note that $\beta$ is in the algebraic closure of $\K(\oy)$. Therefore, $\alpha=\beta$ is a root of $\gcd(W_1,W_2)\in \K[t_2,t_3]$. So $Q\in \proj2$.

\medskip

\noindent (3) Let $A\in \myB({\cP})$. Then, clearly $A\in \proj2$. Moreover,
$p_i(A)=0,\,\,i=1,\ldots,4$. So, $W_{1}(A)=0=W_2(A)$. Therefore, $A\in \myC(W_{1}) \cap \myC(W_{2})\cap \proj2$. Conversely, if $A\in \myC(W_{1}) \cap \myC(W_{2})\cap \proj2$, then, since $A\in \proj2$, we get $p_i(A)=0$ for all $i$, and hence $A\in \myB({\cP})$.

\medskip

\noindent (4) Changing coordinates, we may assume that $A=(0:0:1)$. Then $p_i$ can be expressed as
\[
p_i(\ot)=M_{i,n_{{\cP}}}(t_1,t_2)+\cdots +M_{i,n_{\cP}-\ell_i}(t_1,t_2)t_3^{\ell_i},
\]
where $M_{i,k}$ is homogeneous of degree $k$, $\ell_i:=\mult(A,\myC(p_i))$,  and $n_{\cP}:=\deg({\cP})=\deg(p_i)$. Moreover,
$W_{1}^{L}$ can be expressed as (similarly for $W_{2}^{L}$)
\[
W_{1}^{L}=N_{n_{\cal P}}(\ox,\oy,t_1,t_2)+\cdots +N_{n_{\cal P}-\ell}(\ox,\oy,t_1,t_2)\,t_3^{\ell},
 \]
where $N_{k}$ is $\{t_1,t_2\}$-homogeneous of degree $k$ and $\ell=\min \{\ell_1,\ell_2,\ell_3\}$. Indeed,
if we define $M_{i,j}=0$ if $j<n_{\cal P}-\ell_i$, then
\begin{equation}\label{eq-tg}
N_{n_{\cal P}-\ell}(\ox,\oy,t_1,t_2)=x_1 M_{1,n_{\cal P}-\ell}+ x_2 M_{2,n_{\cal P}-\ell} +x_3 M_{3,n_{\cal P}-\ell}+x_4 M_{4,n_{\cal P}-\ell}.
\end{equation}
From here, the result follows.

\medskip

\noindent (5) For $L$ being the identity and $A=(0:0:1)$, the result follows from \eqref{eq-tg}.
Now, the general case follows by taking into account how tangents change via a projective transformation and using the fact that multiplicities are preserved.
\end{proof}

Taking into account Proposition \ref{Prop-BasePointS} (1), (2), and the relation between resultants and the multiplicity of intersections (see Chapter IV, Section 5 in \cite{Walker}), we get the next theorem  which relates the multiplicity of the base locus with resultants.

\para

\begin{theorem}\label{theorem-betaW1LW2L} For every $L\in \myG$,
we have
$$\betap=\deg_{\ot}(\content_{\{\ox,\oy\}}(\Res_{t_3}(W_{1}^{L},W_{2}^{L}))). $$
\end{theorem}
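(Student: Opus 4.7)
\para

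The plan is to read the multiplicity of the base locus off the $t_3$-resultant $R:=\Res_{t_3}(W_1^L,W_2^L)$ by invoking the classical factorization of resultants in terms of intersection multiplicities \cite[Ch.~IV, Sec.~5]{Walker}. By Proposition~\ref{Prop-BasePointS}(1) the curves $\myC(W_1^L)$ and $\myC(W_2^L)$ share no component, and the hypothesis $(0:0:1)\notin \myC(p_i)$ for all $i$, combined with the fact that $L\in \myG$ is a projective transformation (so $L(\cP(0:0:1))\neq 0$), implies that $(0:0:1)\notin \myC(W_1^L)\cup \myC(W_2^L)$. Consequently
\[
R = c\prod_{P\in \myC(W_1^L)\cap\myC(W_2^L)} L_P^{\mult_P(\myC(W_1^L),\myC(W_2^L))}\in \mathbb{F}[t_1,t_2]
\]
for some $c\in \mathbb{F}^{\times}$, where $L_P$ is the linear form in $\mathbb{F}[t_1,t_2]$ cutting out the line joining $P$ and $(0:0:1)$.

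\para

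Next I would group the factors by line through $(0:0:1)$ and split $R = R_1\cdot R_2$, where $R_1$ collects the contributions from lines meeting $\myC(W_1^L)\cap \myC(W_2^L)$ inside $\proj2$ and $R_2$ collects those from the remaining lines. By Proposition~\ref{Prop-BasePointS}(2)--(3) these two types are exhaustive and mutually exclusive, and lines of the first type are precisely the lines through base points of $\cP$. Since base points are $\mathbb{K}$-rational, every $L_P$ contributing to $R_1$ lies in $\mathbb{K}[t_1,t_2]$, so $R_1\in \mathbb{K}[t_1,t_2]$, and by Definition~\ref{def-multiplicity-BP},
\[
\deg_{\ot}(R_1)=\sum_{A\in \myB(\cP)}\mult_A(\myC(W_1^L),\myC(W_2^L))=\mult(\myB(\cP)).
\]

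\para

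The crux is then to show $\content_{\{\ox,\oy\}}(R_2)\in \mathbb{K}^{\times}$; because $R_1\in \mathbb{K}[t_1,t_2]$ is free of $\ox,\oy$, this gives $\content_{\{\ox,\oy\}}(R)=R_1\cdot\content_{\{\ox,\oy\}}(R_2)$ equal to $R_1$ up to a unit, whence the theorem follows by comparing $\ot$-degrees. Suppose to the contrary that some $bt_1-at_2$ with $(a,b)\in\mathbb{K}^2\setminus\{(0,0)\}$ divides $R_2$; since $\mathbb{K}$ is algebraically closed and $R_2$ is homogeneous in $t_1,t_2$, it suffices to consider linear factors. Then $bt_1-at_2$ agrees up to scalar with some $L_P$ for a non-base-point $P\notin \proj2$, so the $\mathbb{K}$-rational line $bt_1=at_2$ through $(0:0:1)$ contains $P$. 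Parametrizing this line and restricting $W_1^L$ and $W_2^L$ to it exactly as in the proof of Proposition~\ref{Prop-BasePointS}(2), the free affine coordinate of $P$ is simultaneously a root of a polynomial with coefficients in $\mathbb{K}[\ox]$ and of one with coefficients in $\mathbb{K}[\oy]$, hence algebraic over both $\mathbb{K}(\ox)$ and $\mathbb{K}(\oy)$; by algebraic independence of $\ox$ and $\oy$ over $\mathbb{K}$ this coordinate must lie in $\mathbb{K}$, forcing $P\in \proj2$ and contradicting the choice of $P$. This final step, translating the geometric separation of base points from non-base points into the algebraic triviality of a content, is where I expect the main subtlety to lie; however the needed algebraic-independence argument is already implicit in the proof of Proposition~\ref{Prop-BasePointS}(2) and so should go through with little additional work.
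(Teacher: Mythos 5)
Your proof is correct and takes essentially the same route as the paper: invoke the resultant factorization from Walker, use Proposition~\ref{Prop-BasePointS}(1)--(3) to identify which linear factors come from base points, and use the algebraic-independence of $\ox$ and $\oy$ (the mechanism already driving the proof of Proposition~\ref{Prop-BasePointS}(2)) to rule out $\K$-rational factors from non-base-point intersections. The paper's proof is considerably terser, essentially asserting the conclusion from Walker's theorem and the three parts of the proposition; your explicit decomposition $R = R_1 R_2$ and the argument that $\content_{\{\ox,\oy\}}(R_2)$ is a unit spell out exactly the step the paper leaves implicit, and that is where the real content of the theorem lies.
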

\begin{proof}
By hypothesis, $(0:0:1)\not\in \myC(W_{i}^{L})$ for $i=1,2$.  By Proposition \ref{Prop-BasePointS} (2), any intersection point in $\Bbb{P}^2(\Bbb{F})$ colinear with $(0:0:1)$ and a base point lies in $\Bbb{P}^2(\Bbb{K})$ and hence is a base point by Proposition \ref{Prop-BasePointS} (3). Since the curves do not share components by Proposition \ref{Prop-BasePointS} (1), the result follows from Theorem 5.3 of \cite[p.\ 111]{Walker}.
\end{proof}

\para

In the second part of the section, we will show that for almost all projective transformations, the multiplicity of intersection of the base points locus can be achieved by a simplified version of the curves $\myC(W_{i}^{L})$. More precisely,  consider the polynomials
\begin{equation}\label{eq-K1K2g}
\begin{array}{lll}
K_{1}^{L}(\ox,\ot) &:=& W_{1}^{L}(x_4,0,0,-x_1,\ot)=x_4L_1({\cP})-x_1L_4({\cP}) \in \mathbb{K}(\ox)[\ot]\\
\noalign{\medskip}
K_{2}^{L}(\ox,\ot) &:=& W_{2}^{L}(0,0,x_4,-x_3,\ot)=x_4L_3({\cP})-x_3L_4({\cP}) \in \mathbb{K}(\ox)[\ot],
\end{array}
\end{equation}
where $L=(L_1:\cdots:L_4)\in \myG$. We start with a technical lemma that
relates $\Res_{t_3}(K_{1}^{L},K_{2}^{L})\neq 0$ to the resultant $\Res_{t_3}(W_{1}^{L},W_{2}^{L})$ when $L$ lies in a suitably chosen open subset of $\myGG \subsetneq \myG$ (see the notation in Section \ref{S-intro}).

\para

\begin{lemma}\label{lemm-open-1}
Let $\cA=(x_4,0,0,-x_1,0,0,x_4,-x_3,t_1,t_2)$.  Then there exists a non-empty Zariski open subset $\Omega$ of $\myGG$ such that
for every $L\in \Omega$, we have
\begin{enumerate}
\item  $\Res_{t_3}(W_{1}^{L},W_{2}^{L})(\cA)=
\Res_{t_3}(K_{1}^{L},K_{2}^{L}) \ne 0$.
\item $\Primpart_{\ox}(\Res_{t_3}(W_{1}^{L},W_{2}^{L})(\cA))=$
    $\Primpart_{\ox}(\Res_{t_3}(K_{1}^{L} ,K_{2}^{L}))$.
\item $\Content_{\ox}(\Res_{t_3}(W_{1}^{L},W_{2}^{L})(\cA))=$
    $\Content_{\ox}(\Res_{t_3}(K_{1}^{L} ,K_{2}^{L}))$.
\item $\deg_{\ot}(\Primpart_{\{\ox,\oy\}}(\Res_{t_3}(W_{1}^{L},W_{2}^{L}))) = \deg_{\ot}(
\Primpart_{\ox}(\Res_{t_3}(K_{1}^{L},K_{2}^{L})))$.
\item $\deg_{\ot}(\Content_{\{\ox,\oy\}}(\Res_{t_3}(W_{1}^{L},W_{2}^{L})))=\deg_{\ot}(\Content_{\ox}(\Res_{t_3}(K_{1}^{L},K_{2}^{L})))$.
\end{enumerate}
\end{lemma}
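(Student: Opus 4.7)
I would construct $\Omega$ as the intersection of finitely many non-empty Zariski-open subsets of $\myGG$, one per technical genericity condition, and then derive parts (2)--(5) from part (1) by exploiting the multiplicative decomposition $R=\Content_{\{\ox,\oy\}}(R)\cdot\Primpart_{\{\ox,\oy\}}(R)$, where $R:=\Res_{t_3}(W_1^L,W_2^L)$. The central tool is the specialization theorem for resultants: a $\mathbb{K}$-algebra homomorphism that does not decrease the $t_3$-degrees of two polynomials commutes with $\Res_{t_3}$.

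\textbf{Items (1), (2), (3).} Because $L\in\myGG$ forces $L_4(\cP)=p_4$, the $t_3$-leading coefficient of $W_1^L$ is $\sum_i x_i\,L_i(\cP)(0,0,1)$, whose image under the substitution $\cA$ equals $x_4\,L_1(\cP)(0,0,1)-x_1\,p_4(0,0,1)$. This is a non-zero polynomial in $x_1,x_4$ thanks to the standing hypothesis $(0{:}0{:}1)\notin\myC(p_4)$, and an analogous calculation works for $W_2^L$; thus $\cA$ does not drop the $t_3$-degrees and the specialization theorem yields the identity
\[ \Res_{t_3}(W_1^L,W_2^L)(\cA)=\Res_{t_3}(K_1^L,K_2^L). \]
Non-vanishing of the right-hand side is a Zariski-open condition in the entries of $L$, and it is non-empty: for generic $L$, the polynomials $K_1^L$ and $K_2^L$ are generic $\mathbb{K}[x_1,x_3,x_4]$-linear combinations of $p_1,\ldots,p_4$, and any common $\ot$-factor would force a non-trivial divisor of $\gcd(p_1,\ldots,p_4)=1$. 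Call the resulting open subset $\Omega_1$. Items (2) and (3) are then immediate, since two equal polynomials share the same $\ox$-content and $\ox$-primitive part.

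\textbf{Items (4), (5) and the main obstacle.} Write $R=C\cdot P$ with $C=\Content_{\{\ox,\oy\}}(R)\in\mathbb{K}[t_1,t_2]$ and $P=\Primpart_{\{\ox,\oy\}}(R)$. Since $\cA$ leaves $t_1,t_2$ untouched, $R(\cA)=C\cdot P(\cA)$, and consequently $\Content_{\ox}(R(\cA))=C\cdot\Content_{\ox}(P(\cA))$. Items (4) and (5) therefore reduce to two Zariski-open genericity conditions on $L$: (i) $\Content_{\ox}(P(\cA))\in\mathbb{K}^{*}$, so that $P(\cA)$ remains $\ox$-primitive, and (ii) $\deg_{\ot}(P(\cA))=\deg_{\ot}(P)$, so that the top-$\ot$-degree part of $P$ survives the substitution. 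The main obstacle is proving that the intersection $\Omega_2$ of these two opens is non-empty: primitivity can be genuinely lost under specialization (for example, the primitive polynomial $x_1 t_1-x_2 t_2$ becomes $x_4 t_1$ under $(x_1,x_2)\mapsto(x_4,0)$, acquiring a $t_1$-factor). My plan is to produce an explicit witness $L_0\in\myGG$ by analyzing the $\ox$-support of $\Res_{t_3}(x_4 p_1-x_1 p_4,\, x_4 p_3-x_3 p_4)$ and verifying (i) and (ii) there directly; then $\Omega=\Omega_1\cap\Omega_2$ completes the proof.
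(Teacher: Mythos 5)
Your handling of items (1)--(3) matches the paper's: the substitution $\cA$ preserves the $t_3$-degrees because the leading coefficients of $W_i^L$ contain the term in $x_4\,\LC_{t_3}(p_4)$ (respectively $-x_1\,\LC_{t_3}(p_4)$, $-x_3\,\LC_{t_3}(p_4)$) which does not vanish under $\cA$, so the specialization lemma for resultants applies, and non-vanishing plus non-emptiness of $\Omega_1$ follows from $\gcd(p_1,\ldots,p_4)=1$, exactly as in the paper.

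Your reduction of items (4) and (5) is also correct and well-observed: since $\cA$ fixes $t_1,t_2$ and $R=\Res_{t_3}(W_1^L,W_2^L)$ is $\ot$-homogeneous of degree $\deg(\cP)^2$ while $R(\cA)=\Res_{t_3}(K_1^L,K_2^L)$ is as well, the degree condition (ii) is automatic and the only thing to verify is (i), that $\Primpart_{\{\ox,\oy\}}(R)$ remains $\ox$-primitive after $\cA$. \textbf{However, this is precisely where your proposal has a genuine gap.} You assert that (i) is a Zariski-open condition in $L$ and that a witness $L_0$ would complete the argument, but neither claim is justified, and neither is obvious. The difficulty is that $P=\Primpart_{\{\ox,\oy\}}(R)$ itself depends on $L$: as $L$ varies, the content $\Content_{\{\ox,\oy\}}(R)$ can jump (content only grows under specialization, and can do so discontinuously), so ``$\Content_{\ox}(P(\cA))\in\K^*$'' does not visibly describe an open set, and exhibiting one $L_0$ for which it holds does not by itself give openness. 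Your own example ($x_1t_1-x_2t_2\mapsto x_4t_1$) shows the phenomenon you must control, but you stop at acknowledging it rather than controlling it.

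The paper resolves this by working \emph{universally}: it introduces the generic transformation $\cL$ with free coefficients $\oz$ and the auxiliary resultant $T(\myX,\myY,t_1,t_2)$ with independent variables in place of $\ox,\oy$, and proves the structural factorization $R^{\cL}=N(\oz)\,B(t_1,t_2)\,M(\oz,\ox,\oy,t_1,t_2)$ with $B$ the base-point factor, $M$ primitive, and --- crucially --- $N\in\K[\oz]$ (this uses Theorem~\ref{theorem-betaW1LW2L} to rule out $N$ depending on $t_1,t_2$). With this in hand, the conditions that specialization at $\oz^L$ preserves primitivity and degree are expressed as non-vanishing of explicit polynomials $A_1,A_2,A_3$ in $\oz$, which manifestly cuts out a non-empty Zariski-open $\Omega\subset\myGG$. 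Without some version of this universal bookkeeping (or another device to prove semicontinuity of the content degree), your plan does not close. In short: your reduction is a valid and efficient simplification of the problem, but the openness and non-emptiness of the set where (i) holds --- the actual content of the lemma for parts (4) and (5) --- remains unproved in your proposal.
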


\begin{proof}
Let $\cL(u_1,\ldots,u_4)=(\cL_1:\cL_2:\cL_3:u_4)$ be a generic element of $\myGG$; that is, $\cL_i=z_{i,1} u_1+ z_{i,2} u_2+ z_{i,3} u_3$, where $z_{i,j}$ are undetermined coefficients satisfying that the determinant of the corresponding matrix is not zero.
 Let $\oz=(z_{1,1},\ldots,z_{3,3})$. We introduce some notation:
 \begin{itemize}
 \item $W_{1}^{\cL}:=\sum_{i=1}^{4} x_i \cL_i(\cP)$, $W_{2}^{\cL}:=\sum_{i=1}^{4} y_i \cL_i(\cP)$, see \eqref{eq-WL}.
 \item $K_{1}^{\cL}:=x_4 \cL_1(\cP)-x_1\cL_4(\cP)$, $K_{2}^{\cL}:=x_4 \cL_3(\cP)-x_3\cL_4(\cP)$, see \eqref{eq-K1K2g}.
\item $\myX:=(\mathrm{X}_1,\ldots,\mathrm{X}_4)$ are new variables; similarly for $\myY$.  Let $\widetilde{W}_{1}:=W_{1}(\myX,\ot),$ see \eqref{eq-W}; similarly for $\widetilde{W}_{2}$.
\item $R^{\cL}(\oz,\ox,\oy,t_1,t_2):=\Res_{t_3}(W_{1}^{\cL},W_{2}^{\cL})$, $S^{\cL}(\oz,\ox,t_1,t_2):=\Res_{t_3}(K_{1}^{\cL},K_{2}^{\cL})$.
    \item     $T(\myX,\myY,t_1,t_2):=\Res_{t_3}(\widetilde{W}_{1},\widetilde{W}_2)$.
 \item   $\cA_1:=(x_4,0,0,-x_1,t_1,t_2)$, and  $\cA_2:=(0,0,x_4,-x_3,t_1,t_2)$.
 \item $\cT:=(x_4z_{1,1}, x_4z_{1,2}, x_4z_{1,3}, -x_1,x_4z_{3,1}, x_4z_{3,2}, x_4z_{3,3}, -x_3,t_1,t_2)$.
 \item For   $L\in \myGG$, we denote by $\oz^L$ the coefficient list of $L$. In addition, we denote by $\cT^L$ the tuple $\cT$ specialized at the coefficients of $L$.
 \end{itemize}

 We now prove statement (1). First observe that for  $L\in \myGG$, the summands $x_4 p_4$ of $W_1^L$ and $x_1 p_4$ of $K_1^L$ do not depend on $L$.  It follows that $\deg_{t_3}(K_{1}^{L}) = \deg_{t_3}(W_{1}^{L})$, and $\deg_{t_3}(K_{2}^{L}) = \deg_{t_3}(W_{2}^{L})$ holds similarly.   Since $K_1^L = W_1^L(\cA_1)$ and  $K_2^L = W_2^L(\cA_2)$, we can apply Lemma 4.3.1 in \cite[p.\ 96]{Winkler} on the specialization of resultants to obtain
\[
\Res_{t_3}(W_{1}^{L},W_{2}^{L})(\cA)= \Res_{t_3}(W_{1}^{L}(\cA_1),W_{2}^{L}(\cA_2)) =
\Res_{t_3}(K_{1}^{L},K_{2}^{L}),
\]
proving the first part of statement (1).  However, to ensure that the resultant is non-zero, we need to put some restrictions on $L \in \myGG$.  We construct a non-empty open subset $\Omega_1 \subset \myGG$ as follows.  Consider
\[
G_1(\oz,t_1,t_2):=\Res_{t_3}(\cL_1(\cP),p_4)\in \K[\oz,t_1,t_2].
\]
Let us show that $G_1\neq 0$. Indeed, if $G_1=0$, then $\gcd(z_{1,1}p_1+z_{1,2} p_2 +z_{1,3} p_3,p_4)\neq 1$.  Since $p_{4}\in \K[\ot]$, this gcd divides $p_1,p_2,p_3,p_4$, which contradicts $\gcd(p_1,\ldots,p_4)=1$. Then define $B_1(\oz)$ to be any non-zero coefficient of $G_1$ w.r.t.\ $\{t_1,t_2\}$.

Similarly, consider
\[
G_2(\oz,t_1,t_2):=\Res_{t_3}(\cL_3(\cP),p_4)\in \K[\oz,t_1,t_2],
\]
and reasoning as above shows that $G_2\neq 0$.  Let $B_2(\oz)$ be any non-zero coefficient of $G_2$ w.r.t.\ $\{t_1,t_2\}$.
Then define $\Omega_{1}$ as
\begin{equation}\label{eq-omega1def}
\Omega_{1}=\{ L\in \myGG \mid B_1(\oz^L)\,B_2(\oz^L)\neq 0\}.
\end{equation}
It follows that $\gcd(L_1(\cP),L_4(\cP))=\gcd(L_3(\cP),L_4(\cP)))=1$ for all $L \in \Omega_{1}$.

Now suppose $\Res_{t_3}(K_{1}^{L},K_{2}^{L}) = 0$ for some $L \in \Omega_1$.  Then $K_{1}^{L} = x_4 L_1(\cP) - x_1L_4(\cP)$ and $K_{2}^{L} = x_4 L_3(\cP) - x_3L_4(\cP)$ have a non-trivial common factor which must divide $L_1(\cP)$, $L_3(\cP)$ and $L_4(\cP)$.  This is impossible since $L \in \Omega_1$, and statement (1) is proved.

Statements (2) and (3) now follow when $L \in \Omega_1$ since $\Res_{t_3}(K_{1}^{L},K_{2}^{L}) \ne 0$.  For statements (4) and (5), our arguments will require that we shrink $\Omega_1$ slightly.  This will lead to the open subset $\Omega$ in the statement of the lemma.

 Before actually constructing $\Omega$, we need some preliminary work that will be useful
 below.
 Let $\LC_{t_3}$ denote the leading coefficient w.r.t.\ $t_3$. Since $p_i(0,0,1)\neq 0$, we know that $\deg_{t_3}(p_1)=\cdots=\deg_{t_3}(p_4) =\deg(\cP)$. Then
 \begin{equation}\label{eq-LCoef}
 \begin{array}{l}
 A_{1}^{*}(\oz,\ox,t_1,t_2):=\LC_{t_3}(W_{1}^{\cL})=\left(\sum_{i=1}^{3} x_{i}\,\sum_{j=1}^{3} z_{i,j}\LC_{t_3}(p_i)\right)+ x_4 \LC_{t_3}(p_4),\\
 \noalign{\medskip}
 A_{2}^{*}(\oz,\oy,t_1,t_2):=\LC_{t_3}(W_{2}^{\cL})
 =\left(\sum_{i=1}^{3} y_{i}\,\sum_{j=1}^{3} z_{i,j}\LC_{t_3}(p_i)\right)+ y_4 \LC_{t_3}(p_4). \end{array}
 \end{equation}
  So  $A_{1}^{*}(\oz,\cA_1)\neq 0$ and $A_{2}^{*}(\oz,\cA_2)\neq 0$. Moreover, we observe that for all $L\in \myGG$, we have $\LC_{t_3}(W_{1}^{L})\neq 0$ since it contains the summand $x_4\LC_{t_3}(p_4)$ that does not depend on $\oz^L$; similarly $\LC_{t_3}(W_{2}^{L})\neq 0$. Then using  the behaviour of the resultant under a ring homomorphism (see \cite[Lemma 4.3.1]{Winkler}), we obtain
\begin{equation}\label{eq-specialization-RL}
R^{\cL}(\oz^L,\ox,\oy,t_1,t_2)=\Res_{t_3}(W_{1}^{L},W_{2}^{L}).
\end{equation}
Analogous reasoning applied to $K_{i}^{\cL}$ yields
 \begin{equation}\label{eq-specialization-SL}
S^{\cL}(\oz^L,\ox,t_1,t_2)=\Res_{t_3}(K_{1}^{L},K_{2}^{L}).
\end{equation}
On the other hand, a  direct algebraic manipulation shows that $\widetilde{W}_i(\cT)=K_{i}^{\cL}$, and similarly as above one gets
\begin{equation}\label{eq-T-S}
T(\cT)=S^{\cL}(\oz,\ox,t_1,t_2).
\end{equation}

 Let us now construct $\Omega$. For this purpose, we introduce the polynomials $A_1, A_2, A_3$ as follows.

 \medskip

 \noindent \textsf{Definition of $A_1$.} By Proposition \ref{Prop-BasePointS} (3), we know that
 $T\neq 0$. Let us show that $R^\cL\neq 0$. Indeed, if it is zero, then $B:=\gcd(W_{1}^{\cL},W_{2}^{\cL})\neq 1$. Thus $B$ divides $x_1 \cL_1(\cP)+x_2 \cL_2(\cP)+x_3 \cL_3(\cP)+x_4 p_4$ and $y_1 \cL_1(\cP)+y_2 \cL_2(\cP)+y_3 \cL_3(\cP)+y_4 p_4$. So,
 $B$ divides $p_4$ and also divides $\cL_i(\cP)$ for $i\in \{1,2,3\}$. In particular $B\in \K[\ot]$ and $B$ divides $\sum_{i=1}^{3} z_{1,i} p_i$. That is, $B$ also divides $p_1,p_2,p_3$. Hence $B$ divides $\gcd(p_1\ldots,p_4)=1$, a contradiction.

Now factor $T$
 as product of the content and the primitive part w.r.t.\  $\{\myX,\myY\}$, and  $R^{\cL}$ as product of the content and the primitive part w.r.t.\ $\{\ox,\oy\}$.  This gives
$T(\myX,\myY,t_1,t_2)= C^*(t_1,t_2) M^*(\myX,\myY,t_1,t_2)$ and $R^{\cL}(\oz,\ox,\oy,t_1,t_2) = C(\oz,t_1,t_2) M(\oz,\ox,\oy,t_1,t_2)$.
Taking $L$ as the identity in  Theorem \ref{theorem-betaW1LW2L}, and using Proposition \ref{Prop-BasePointS} (3), we see that $C^*$ is the factor generated by the base points with the corresponding multiplicities of intersection. Moreover, the same argument applies to $C$ for $L$ generic in $\myGG$, namely $\cL$. Therefore, if $B(t_1,t_2)$ is the factor coming from the based points, then $C^*=B$ and $C=N B$ for some $N\in \K[\oz,t_1,t_2]$. Let us show that $N \in \K[\oz]$.
Indeed, by  Theorem \ref{theorem-betaW1LW2L},  $\deg_{\ot}(B)=\mult(\myB(\cP)).$
Now suppose that $N$ depends on $\{t_1,t_2\}$. Then taking $L$ such that  $N(\oz^{L},t_1,t_2)$ is non-constant, by \eqref{eq-specialization-RL},
$
\deg_{\ot}(\Content_{\{\ox,\oy\}}(\Res_{t_3}(W_{1}^{L},W_2^{L})))>\deg_{\ot}(B)=\mult(\myB(\cP))$,
which contradicts Theorem \ref{theorem-betaW1LW2L}.
So we have
\begin{equation}\label{eq-resTRL}
\begin{array}{rll} T(\myX,\myY,t_1,t_2)&= &B(t_1,t_2) M^*(\myX,\myY,t_1,t_2)
\\
R^{\cL}(\oz,\ox,\oy,t_1,t_2) &= & N(\oz) B(t_1,t_2) M(\oz,\ox,\oy,t_1,t_2).
\end{array}
\end{equation}
 We define the polynomial $A_1$ as follows using $M^*(\cT)$. Observe that since by definition $M^*(\myX,\myY,t_1,t_2)$ is primitive w.r.t.\ $\{\myX,\myY\}$,   $M^*(\cT)$ is primitive w.r.t.\ $\{\ox,\oz\}$. Therefore the resultant
 \[
 E(\ox,\oz,t_1)=\Resultant_{t_2}(M^*(\cT), B(t_1,t_2))
 \]
 is non-zero. Since $E$ is homogeneous w.r.t.\ $t_1$, $E$ is of the form $E=D(\ox,\oz) t_{1}^{m}$ for some $m\in \mathbb{N}$, with $D\neq 0$. Let $e(\oz)$ be  a non-zero coefficient of $D$ w.r.t.\ $\ox$.
 In this situation, we define $A_1(\oz)=N(\oz)\,e(\oz)$.

 \medskip

 \noindent \textsf{Definition of $A_2$.}  Let $M$ be as in \eqref{eq-resTRL}.
Let us show that  $M(\oz,\cA)\neq 0$. Indeed, if $M(\oz,\cA)=0$, then $R^{\cL}(\oz,\cA)=0.$ Using the behaviour of the resultant under a ring homomorphism (see \cite[Lemma 4.3.1]{Winkler}), we have
\[0=R^{\cL}(\oz,\cA)=
\LC_{t_3}(W_{1}^{\cL})(\oz,\cA_1)^{\beta}\, \Res_{t_3}(W_{1}^{\cL}(\oz,\cA_1),
W_{2}^{\cL}(\oz,\cA_2)).
 \]
for $\beta=|\deg_{t_3}(W_{1}^{\cL}(\oz,\cA_1))-\deg_{t_3}(W_{2}^{\cL}(\oz,\cA_2))|$.
As noted above, $\LC_{t_3}(W_{1}^{\cL})(\oz,\cA_1)\neq 0$, and hence $\Res_{t_3}(W_{1}^{\cL}(\oz,\cA_1),W_{2}^{\cL}(\oz,\cA_2))=0$.
 Thus $\gcd(W_{1}^{\cL}(\oz,\cA_1),W_{2}^{\cL}(\oz,\cA_2))\neq 1$, i.e.,
 $x_4 \cL_1 ({\cP})-x_1 \cL_4({\cP})$ and $x_4\cL_3({\cP})-x_3\cL_4({\cP})$ have a common factor. Reasoning as above, this factor divides $\gcd(p_1,\ldots,p_4)=1$, a contradiction.

Let $Q(\oz,\ox)$ be a non-zero coefficient of $M(\oz,\cA)$ w.r.t.\ $\{t_1,t_2\}$.  We define the polynomial $A_2(\oz)$ to be any non-zero coefficient of $Q$ w.r.t.\ $\ox$.

\medskip

 \noindent \textsf{Definition of $A_3$.} Consider the resultant (see \eqref{eq-resTRL})
 \[
 G(\ox,\oz,t_1)=\Resultant_{t_2}(M(\oz,\ox,\oy,t_1,t_2), B(t_1,t_2)).
 \]
 $G\neq 0$ because $M$ is primitive w.r.t.\ $\{\ox,\oy\}$.   Since $G$ is homogeneous w.r.t.\ $t_1$, we have $G=D^*(\oz,\ox,\oy) t_{1}^{m}$ for some $m\in \mathbb{N}$, and some $D^*\in \K[\oz,\ox,\oy]\setminus \{0\}$. Let $g(\oz)$ be  a non-zero coefficient of $D^*$ w.r.t.\ $\{\ox,\oy\}$.
 In this situation, we define $A_3(\oz)=g(\oz)$.

\medskip

We define $\Omega$ to consist of those projective transformations $L\in \Omega_1$ from \eqref{eq-omega1def} such that $A_1(\oz^L) \cdot A_2(\oz^L) \cdot A_{3}(\oz^L)\neq 0 $. Let us prove that  statements (4) and (5) of the lemma hold for $L\in \Omega$.  We begin with the following equalities:
\[
\begin{array}{rclr}
N(\oz^L) B(t_1,t_2) M(\oz^L,\cA) &=& R^{\cL}(\oz^L,\cA) & \text{see \eqref{eq-resTRL}} \\
& = & \Res_{t_3}(W_{1}^{L},W_{2}^{L})(\cA) & \text{see \eqref{eq-specialization-RL}}  \\
& =& \Res_{t_3}(K_{1}^{L},K_{2}^{L}) & \text{see statement (1)}  \\
&=& S^{\cL}(\oz^{L},\ox,t_1,t_3) & \text{see \eqref{eq-specialization-SL}}  \\
& =& T(\cT^L) & \text{see \eqref{eq-T-S}}  \\
& = & B(t_1,t_2) M^*(\cT^L). & \text{see \eqref{eq-resTRL}}
\end{array}
\]
Therefore, since $A_1(\oz^L)\neq 0$, we have $N(\oz^L)\neq 0$ and hence $M(\oz^L,\cA)=M^*(\cT^L)$ up to multiplication by a non-zero field element.  Furthermore, since $e(\oz^L)\neq 0$, we see that $M^*(\cT^L)$ is primitive w.r.t.\ $\ox$, and thus $M(\oz^L,\cA)$ also.  In this situation,
using \eqref{eq-specialization-RL} and \eqref{eq-resTRL} we obtain
\[
\Res_{t_3}(W_{1}^{L},W_{2}^{L})(\cA)= N(\oz^L) B(t_1,t_2) M(\oz^L, \cA).
\]
Moreover, since $M(\oz^L,\cA)$ is primitive w.r.t.\ $\ox$ we get
\begin{equation}\label{eq-primpart1}
\Primpart_{\ox}(\Res_{t_3}(W_{1}^{L},W_{2}^{L})(\cA))=M(\oz^L,\cA).
\end{equation}
On the other hand, applying \eqref{eq-specialization-SL}, \eqref{eq-T-S} and \eqref{eq-resTRL}, we have
\[ B(t_1,t_2) M^*(\cT^L)=T(\cT^L)=\Res_{t_3}(K_{1}^{L},K_{2}^{L}),
\]
and since $M^*(\cT^L)$ is primitive w.r.t.\ $\ox$, we get
\begin{equation}\label{eq-primpart2}
\Primpart_{\ox}(\Res_{t_3}(K_{1}^{L},K_{2}^{L}))=M^*(\cT^L).
\end{equation}
By statement (2), we have $M(\oz^L,\cA)=M^*(\cT^L)$, so
\begin{equation}\label{eq-deg1}
\deg_{\ot}(\Primpart_{\ox}(\Res_{t_3}(K_{1}^{L},K_{2}^{L})))=\deg_{\ot}(M(\oz^L,\cA)).
\end{equation}
Furthermore, since $A_2(\oz^L)\neq 0$, we have $\deg_{\ot}(M(\oz^L,\cA))=\deg_{\ot}(M(\oz,\cA))$. On the other hand, we have seen above that $M(\oz,\cA)\neq 0$. So, since $M(\oz,\ox,\oy,t_1,t_2)$ is homogeneous w.r.t.\ $\{t_1,t_2\}$, we have $\deg_{\ot}(M(\oz,\cA))=\deg_{\ot}(M(\oz,\ox,\oy,t_1,t_2))$.
 Finally, since $A_3(\oz^L)\neq 0$, we get that $M(\oz^L,\ox,\oy,t_1,t_2)$ is primitive w.r.t.\ $\{\ox,\oy\}$, and hence, by \eqref{eq-specialization-RL} and \eqref{eq-resTRL}, we have
 \begin{equation}\label{eq-deg2}
 \Primpart_{\{\ox,\oy\}}(\Res_{t_3}(W_{1}^{L},W_{2}^{L}))=M(\oz^L,\ox,\oy,t_1,t_2).
 \end{equation}
Moreover, note that $M$ is non-zero, primitive w.r.t.\ $\{\ox,\oy\}$, and homogeneous
w.r.t.\ $\ot$. Thus,
$\deg_{\ot}(M(\oz^L,\ox,\oy,t_1,t_2))=\deg_{\ot}(M(\oz,\ox,\oy,t_1,t_2))$. Therefore
\[
\begin{array}{rclr}
\deg_{\ot}(\Primpart_{\ox}(\Res_{t_3}(K_{1}^{L},K_{2}^{L})))\!\! &=&\!\! \deg_{\ot}(M(\oz^{L},\cA)) & \text{see \eqref{eq-deg1}} \\
\!\! &=&\!\! \deg_{\ot}(M(\oz,\ox,\oy,t_1,t_2)) & \text{$A_2(\oz^L)\neq 0$} \\
\!\! &=&\!\!\deg_{\ot}(M(\oz,\ox,\oy,t_1,t_2)) & \text{see above} \\
\!\! &=&\!\!\deg_{\ot}(\Primpart_{\ox,\oy}(\Res_{t_3}(W_{1}^{L},W_{2}^{L}))). & \text{see \eqref{eq-deg2}}
\end{array}
\]
So (4) follows. Finally, (5) follows from (4) and the fact that both resultants have the same degree w.r.t.\ $\ot$.
\end{proof}

\para

As a consequence of these lemmas, we get the following theorem  that can be seen as more efficient version of the resultant-based formula in Theorem \ref{theorem-betaW1LW2L}.

\para

\begin{theorem}\label{theorem-BetaP-K1K2}
 Let $\Omega$ be the open set introduced in Lemma \ref{lemm-open-1}.  If $L\in \Omega$, then
\[
\betap=\deg_{\ot}(\content_{\ox}(\Res_{t_3}(K_{1}^{L},K_{2}^{L}))).
\]
\end{theorem}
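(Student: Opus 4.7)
The plan is to obtain this as an immediate consequence of two results already in place, by chaining them together. First I would observe that the open set $\Omega$ constructed in Lemma \ref{lemm-open-1} is a subset of $\myGG$, which itself sits inside $\myG$. So any $L \in \Omega$ is in particular an element of $\myG$, and Theorem \ref{theorem-betaW1LW2L} applies to such an $L$, giving
\[
\betap = \deg_{\ot}\bigl(\content_{\{\ox,\oy\}}(\Res_{t_3}(W_{1}^{L},W_{2}^{L}))\bigr).
\]

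Second, I would invoke statement (5) of Lemma \ref{lemm-open-1}, which is designed exactly for the $L \in \Omega$ of the hypothesis and asserts the equality
\[
\deg_{\ot}\bigl(\content_{\{\ox,\oy\}}(\Res_{t_3}(W_{1}^{L},W_{2}^{L}))\bigr) = \deg_{\ot}\bigl(\content_{\ox}(\Res_{t_3}(K_{1}^{L},K_{2}^{L}))\bigr).
\]
Combining the two equalities yields the theorem.

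There is essentially no residual obstacle at this step, because all the technical difficulty has already been absorbed into Lemma \ref{lemm-open-1}: the delicate part was to control the specialization of $W_i^L$ at the tuple $\cA = (x_4,0,0,-x_1,0,0,x_4,-x_3,t_1,t_2)$ into $K_i^L$, to show that this specialization does not cause the resultant to vanish, and to verify that both primitivity and the $\ot$-degree of the content are preserved under the specialization, outside a suitable Zariski open set of projective transformations. Once that machinery is in hand, the present theorem is simply its user-facing statement: from the formula for $\betap$ via the content in eight transcendentals $\{\ox,\oy\}$ we pass to the equivalent formula using only the four transcendentals $\ox$, which is computationally cheaper and is the version we will actually use in the applications that follow.
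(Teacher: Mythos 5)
Your proof is correct and matches the paper's argument exactly: the paper likewise reduces the theorem to Theorem~\ref{theorem-betaW1LW2L} (applicable since $\Omega\subset\myGG\subset\myG$) plus the degree-of-content equality, which is precisely statement (5) of Lemma~\ref{lemm-open-1}. Nothing further is needed.
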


\begin{proof}
By Theorem  \ref{theorem-betaW1LW2L}, it is enough to prove that
\[
\deg_{\ot}(\content_{\{\ox,\oy\}}(\Res_{t_3}(W_{1}^{L},W_{2}^{L})))=
\deg_{\ot}(\content_{\ox}(\Res_{t_3}(K_{1}^{L},K_{2}^{L}))).
\]
And this is a consequence of Lemma \ref{lemm-open-1}.
\end{proof}

\para

Theorems \ref{theorem-betaW1LW2L} and \ref{theorem-BetaP-K1K2} and Proposition \ref{prop-HSmult} imply the following result about the Hilbert-Samuel multiplicity of the base points.

\para

\begin{corollary}\label{cor-HSW1W2K1K2} Assume the notation of Theorems \ref{theorem-betaW1LW2L} and \ref{theorem-BetaP-K1K2}.  Then:
\begin{enumerate}
\item For every $L \in \myG$, we have
\[
\sum_{A \in \myB(\cP)} e(I_A,R_A) = \deg_{\ot}(\content_{\{\ox,\oy\}}(\Res_{t_3}(W_{1}^{L},W_{2}^{L}))).
\]
\item For every  $L$ in the open set $\Omega$ from Lemma \ref{lemm-open-1}, we have
\[
\sum_{A \in \myB(\cP)} e(I_A,R_A) = \deg_{\ot}(\content_{\ox}(\Res_{t_3}(K_{1}^{L},K_{2}^{L}))).
\]
\end{enumerate}
\end{corollary}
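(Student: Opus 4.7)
The plan is very direct: the corollary is obtained by chaining together three results already established in the excerpt, so no new ideas are required beyond recognizing how to compose them. First I would use Corollary \ref{cor-HSmult}, which records the equality $\betap = \sum_{A\in\myB(\cP)} e(I_A,R_A)$; this is the bridge between the intersection-theoretic multiplicity $\betap$ of Definition \ref{def-multiplicity-BP} and the algebraic Hilbert--Samuel multiplicities $e(I_A,R_A)$ from \eqref{eq-HSsetup}. Note that Corollary \ref{cor-HSmult} is in turn an immediate consequence of Proposition \ref{prop-HSmult}, summed over the (finite) base locus.

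For part (1), I would then invoke Theorem \ref{theorem-betaW1LW2L}, which asserts that for \emph{every} $L \in \myG$,
\[
\betap = \deg_{\ot}\bigl(\content_{\{\ox,\oy\}}(\Res_{t_3}(W_{1}^{L},W_{2}^{L}))\bigr).
\]
Combining this with the Hilbert--Samuel identification from Corollary \ref{cor-HSmult} yields (1). For part (2), I would instead invoke Theorem \ref{theorem-BetaP-K1K2}, which gives the analogous identity
\[
\betap = \deg_{\ot}\bigl(\content_{\ox}(\Res_{t_3}(K_{1}^{L},K_{2}^{L}))\bigr)
\]
whenever $L$ lies in the open set $\Omega$ provided by Lemma \ref{lemm-open-1}. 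Again composing with Corollary \ref{cor-HSmult} gives (2).

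Since everything has been prepared by the earlier results, there is really no obstacle to overcome here: the corollary is a pure concatenation of Corollary \ref{cor-HSmult} with Theorems \ref{theorem-betaW1LW2L} and \ref{theorem-BetaP-K1K2}. The only point that deserves a brief comment in writing is to make explicit that the quantifier on $L$ in each part is inherited from the corresponding theorem, so I would simply note ``For part (1), fix $L \in \myG$; for part (2), fix $L \in \Omega$,'' and then assemble the two equalities without further calculation.
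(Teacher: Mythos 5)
Your proposal is correct and matches the paper's own derivation exactly: the paper states the corollary as an immediate consequence of Proposition \ref{prop-HSmult} (equivalently Corollary \ref{cor-HSmult}) combined with Theorems \ref{theorem-betaW1LW2L} and \ref{theorem-BetaP-K1K2}. Nothing more is needed.
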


\para

\begin{remark}\label{remark-formulaP}{\rm
Corollary \ref{cor-HSW1W2K1K2} provides the promised resultant-based algorithm to compute the sum of the Hilbert-Samuel multiplicities $e(I_A, R_A)$ for $A\in \myB(\cP)$. }
\end{remark}

\para

\begin{example}\label{example-fromCox} {\rm We consider the surface $\myS$ introduced in \cite{CoxWebPage} parametrized by
\[\cP(\ot):=(p_1:\cdots:p_4)=(t_2^2t_3+t_1^3:t_1^2t_3+t_2^3:t_1t_2t_3:t_2^2t_3).\]    Let us illustrate how to compute $\betap$ by means of resultants. First of all, since
    $(0:0:1)$ belongs to $\myC(p_i)$, we apply a projective transformation. For instance, we replace $\cP(\ot)$ by $\cP(t_1+t_3,t_2+t_3,t_3)$. In this situation, applying Corollary \ref{cor-HSW1W2K1K2}, we see that the sum of the Hilbert-Samuel multiplicities of the base points is given by
\[
\deg_{\ot}(\content_{\{ \ox,\oy\}}(\Res_{t_3}(W_{1}^{L},W_{2}^{L}))) =   \deg_{\ot}(t_{1}^{4}-4t_{1}^{3}t_{2}+6 t_{1}^{2}t_{2}^{2}-4 t_{1}t_{2}^{3}+t_{2}^{4}) = 4.
\]
In fact, this parametrization has $(0:0:1)$ as its unique base point, necessarily of multiplicity 4.  But the above calculation was done without knowing anything about the number of base points or their individual multiplicities.
 } \end{example}

\para

In the next lemma we relate the degree of the primitive part of the resultant to the degree of the surface defined by $\cP$ and by the degree of the rational map induced by $\cP$ (see notation in Section \ref{S-intro}). Note that $\mapdeg(\cP)$ can be computed using \cite{PS-grado}.

\para

\begin{lemma}\label{lemma-formula} There exists a non-empty Zariski open subset $\Omega'$ of $\myGG$ such that for every $L\in \Omega'$, we have
\[ \deg_{\ot}(\Primpart_{\{ \ox\}}(\Res_{t_3}(K_{1}^{L},K_{2}^{L})))=\deg(\myS) \, \mapdeg(\cP), \]
where $\myS$ is the surface parametrized by $\cP$.
\end{lemma}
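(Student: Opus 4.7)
The plan is to reinterpret the system $K_1^L(\ox,\ot)=K_2^L(\ox,\ot)=0$ as expressing that $\ot$ lies in a fiber of an auxiliary dominant rational map $\psi_L\colon\proj2\dashrightarrow\proj2$ built from $\cP$ and a linear projection of $\projtres$, and then to exploit the multiplicativity of map degrees under composition.

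First I would introduce the projection $\pi\colon\projtres\dashrightarrow\proj2$ away from the point $(0:1:0:0)$, namely $\pi(u_1:u_2:u_3:u_4)=(u_1:u_3:u_4)$. Because $L\in\myGG$ preserves the fourth coordinate, $L_4(\cP)=p_4$, and
\[
\psi_L:=\pi\circ L\circ\cP\colon\proj2\dashrightarrow\proj2,\qquad \psi_L(\ot)=\bigl(L_1(\cP(\ot)):L_3(\cP(\ot)):p_4(\ot)\bigr).
\]
By construction, $K_1^L(\ox,\ot)=K_2^L(\ox,\ot)=0$ is equivalent to $\psi_L(\ot)=(x_1:x_3:x_4)$, so at a specialized $\ox_0$ the common zeros of $K_1^L,K_2^L$ in $\proj2$ split as $\myB(\cP)\cup\psi_L^{-1}(\ox_0)$.

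Next I would compute $\mapdeg(\psi_L)$. On the non-empty Zariski open subset of $\myGG$ on which $L^{-1}\bigl((0:1:0:0)\bigr)\notin\myS$ and $L(\myS)$ is not a cone with vertex $(0:1:0:0)$, the projection $\pi|_{L(\myS)}\colon L(\myS)\dashrightarrow\proj2$ is dominant of degree $\deg(L(\myS))=\deg(\myS)$ (projection from a point off a degree-$d$ surface is generically $d$-to-$1$). Combined with $\mapdeg(L\circ\cP)=\mapdeg(\cP)$ and the multiplicativity of map degrees under composition of dominant rational maps,
\[
\mapdeg(\psi_L)=\mapdeg\bigl(\pi|_{L(\myS)}\bigr)\cdot\mapdeg(L\circ\cP)=\deg(\myS)\,\mapdeg(\cP).
\]

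Third, I would translate this fiber count into a degree statement about the primitive part. Our standing assumption $(0:0:1)\notin\myC(p_i)$ forces $\deg_{t_3}(K_j^L)=n:=\deg(\cP)$ for $j=1,2$, so $R(\ox,t_1,t_2):=\Res_{t_3}(K_1^L,K_2^L)$ is homogeneous of total degree $n^2$ in $\{t_1,t_2\}$, and non-zero on an open subset of $\myGG$ by Lemma \ref{lemm-open-1}(1). Factor $R=C(t_1,t_2)\,M(\ox,t_1,t_2)$ with $C=\Content_\ox(R)$ and $M=\Primpart_\ox(R)$. By the analysis already carried out in Lemma \ref{lemm-open-1}, $C$ depends only on $\ot$ and records, with multiplicity, the contribution of the base points of $\cP$, while $M$ is primitive in $\ox$. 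For a generic $\ox_0\in\K^3$, the specialization $M(\ox_0,\cdot)$ is a non-zero form of degree $\deg_{\ot}(M)$ in $\{t_1,t_2\}$, and by the classical resultant/intersection dictionary (Chapter IV, Section 5 of \cite{Walker}) its roots in $\mathbb{P}^1(\K)$ correspond, counted with multiplicity, to the non-base intersection points of $\myC(K_1^L(\ox_0,\cdot))$ and $\myC(K_2^L(\ox_0,\cdot))$, which by the setup are exactly the points of $\psi_L^{-1}(\ox_0)$. A generic-smoothness argument on a further open subset $\Omega'\subset\myGG$ then guarantees that for generic $\ox_0$ this fiber is reduced, disjoint from $\myB(\cP)$, and that the two plane curves meet transversally at each of its points, so each contributes intersection multiplicity~$1$. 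This yields $\deg_{\ot}(M)=\#\psi_L^{-1}(\ox_0)=\mapdeg(\psi_L)=\deg(\myS)\,\mapdeg(\cP)$, as required.

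The main obstacle is the last step: one must establish both that (i) for sufficiently general $L\in\myGG$, the rational map $\psi_L$ is generically unramified, so that a generic fiber consists of $\mapdeg(\psi_L)$ reduced points, and (ii) for such $L$, a generic fiber avoids $\myB(\cP)$ and gives transversal intersections of the specialized $K_j^L$-curves. Both are Zariski-open conditions on $(L,\ox_0)$ in characteristic zero, and the combination of openness and non-emptiness (using the freedom in $\myGG$) produces the desired $\Omega'$; but making these conditions precise and compatible with the content/primitive-part splitting controlled by Lemma \ref{lemm-open-1} is the most delicate part of the argument.
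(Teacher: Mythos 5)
Your proposal is correct in outline but takes a genuinely different route from the paper. The paper's proof constructs $\Omega'$ as the intersection of three open sets ($\Omega_1$ from Lemma \ref{lemm-open-1}, $\Omega_2$ a Jacobian/gradient-independence condition, and $\Omega_3$ requiring that the center of projection $(0:1:0:0)$ lie off $L(\myS)$) and then verifies the hypotheses of Theorem 6 of \cite{JSC08}, an external implicitization result that directly outputs the identity $\deg_{x_2}(\myS^L) = \deg_{\ot}(\Primpart_{\ox}(\Res_{t_3}(K_1^L,K_2^L)))/\mapdeg(L(\cP))$. You instead reprove the geometric content of that citation from scratch: the observation that $K_1^L=K_2^L=0$ encodes a fiber of $\psi_L = \pi\circ L\circ\cP$, plus $\mapdeg(\psi_L) = \deg(\myS)\,\mapdeg(\cP)$ by multiplicativity and the degree of a generic projection, plus a generic-transversality argument to turn the fiber count into the degree of the primitive part. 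Note the close correspondence between the two approaches: your condition that $L^{-1}(0:1:0:0)\notin\myS$ is exactly the paper's $\Omega_3$, and your requirement that $\psi_L$ be generically unramified is morally the paper's $\Omega_2$ (linear independence of $\nabla(L_1(\cP)/L_4(\cP))$ and $\nabla(L_3(\cP)/L_4(\cP))$, i.e., $\psi_L$ has rank-two Jacobian at a generic point). What your approach buys is transparency and self-containment; what the paper's buys is brevity and a rigorous off-loading of exactly the step you yourself flag as delicate — making the ``reduced fiber, transversal intersections, contributing multiplicity one'' claim precise and compatible with the specialization of the resultant. Your sketch is sound, but to replace the citation you would need to (i) show that $\Res_{t_3}(K_1^L,K_2^L)$ specializes correctly at a generic $\ox_0\in\proj2$ (leading coefficients in $t_3$ must not vanish), (ii) rule out extraneous solutions where $L_1(\cP)=L_3(\cP)=L_4(\cP)=0$ but $L_2(\cP)\neq 0$ (this is where $(0:1:0:0)\notin L(\myS)$ is used), and (iii) carry out the generic-smoothness argument in characteristic zero showing each fiber point gives intersection multiplicity one. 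None of these is problematic, but none is automatic either.
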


\begin{proof} We use the notation introduced in the proof of Lemma \ref{lemm-open-1}. In particular, let $\cL=(\cL_1:\cdots:\cL_4)$ be a generic element of $\myGG$.  We construct $\Omega'$ as the intersection of open subsets $\Omega_1,\Omega_2,\Omega_3$.

\medskip

\noindent \textsf{Definition of $\Omega_1$.} This is the subset $\Omega_1$ defined in \eqref{eq-omega1def}.  Recall that for $L\in \Omega_{1}$, we have $\gcd(L_1(\cP),L_4(\cP))=\gcd(L_3(\cP),L_4(\cP))) =1$.

\medskip

\noindent \textsf{Definition of $\Omega_2$.} We want $\Omega_2$ such  that if $L=(L_1:\cdots:L_4)\in \Omega_2\subset \myGG$, then the gradients $\{\nabla (L_1(\cP)(t_1,t_2,1)/L_4(\cP)(t_1,t_2,1)),\,\nabla (L_3(\cP)(t_1,t_2,1)/L_4(\cP)(t_1,t_2,1))\}$ are linearly
independent as vectors in  ${\mathbb  K}(\ot)^2$.  Recall that $p_4\neq 0$ by hypothesis.

Since $\cP$ parametrizes a surface, there exist two different indexes in $\{1,2,3\}$, say w.l.o.g.\ $1$ and $2$, such that $\nabla\left({p_1(t_1,t_2,1)}/{p_4(t_1,t_2,1)}\right)$, $\nabla\left({p_2(t_1,t_2,1)}/{p_4(t_1,t_2,1)}\right)$ are linearly independent.

For $j\in \{1,3\}$, we introduce the gradient vectors
\[ \overline{v_j}(\oz,t_1,t_2)=(v_{j,1},v_{j,2}):=
\nabla\left(\dfrac{\cL_j(\cP)(t_1,t_2,1)}{\cL_4(\cP)(t_1,t_2,1)}\right)=\sum_{i=1}^{3} z_{j,i} \nabla\left(\dfrac{p_i(t_1,t_2,1)}{p_4(t_1,t_2,1)}\right)
\]
as well as the matrix
\[ \Delta=\left(\begin{array}{cc}
v_{1,1}(\oz,t_1,t_2) & v_{1,2}(\oz,t_1,t_2) \\
v_{3,1}(\oz,t_1,t_2) & v_{3,2}(\oz,t_1,t_2)
\end{array} \right).
\]
We observe that $\det(\Delta)\neq 0$ because specializing $\overline{v}_j$ at
$\oz=(1,0,0,0),\overline{0},(0,1,0,0)$ gives
$\nabla\left({p_1(t_1,t_2,1)}/{p_4(t_1,t_2,1)}\right)$ and $
\nabla\left({p_2(t_1,t_2,1)}/{p_4(t_1,t_2,1)}\right)$, which are linearly independent by hypothesis. Let $A_3(\oz)$ be any non-zero coefficient of $\det(\Delta)$ w.r.t.\ $\{t_1,t_2\}$. We define $\Omega_2$ as
\[
\Omega_{2}=\{ L\in \myGG \,|\, A_3(\oz^L)\neq 0\}.
 \]
\smallskip

\noindent \textsf{Definition of $\Omega_3$.} We take $\Omega_3$ as the open subset of $\myGG$ such that if $F(u_1,u_2,u_3,u_4) = 0$ is the implicit equation of the surface parametrized by $\cP$, and $L\in \myGG$,  then $F(L(u_1,u_2,u_3,u_4))$  does not vanish at $(0:1:0:0)$. Note that this means that the total degree, and the partial degree w.r.t.\ $u_2$, of $F(L(u_1,u_2,u_3,u_4))$ are the same.

\medskip

In this situation, we define $\Omega=\Omega_1\cap \Omega_2 \cap \Omega_3$.
We observe that for $L\in \Omega$, the parametrization $L(\cP)$ satisfies the general hypotheses in \cite[p.\ 120]{JSC08}. Namely, no component of $L(\cP)$ vanishes at $(0:0:1)$ and it is $(1,3)$-settled (this terminology is defined in \cite[p.\ 120]{JSC08}). Therefore, by Theorem 6 of \cite{JSC08}, using our notation,
\[
\deg_{x_2}(\myS^L)=\frac{\deg_{\ot}(
\Primpart_{\ox}(\Res_{t_3}(K_{1}^L,K_{2}^{L})))}
{\mapdeg(L(\cP))},
\]
where $\myS^L$ denote the surface parametrized by $L(\cP)$.
Now the result follows by taking into account that since $L\in \Omega_3$, $\deg_{x_2}(\myS)$ is the degree of the surface parametrized by $L(\cP)$, that is,  $\deg_{x_2}(\myS^L)=\deg(\myS)$. Moreover, $\mapdeg(L(\cP))=\mapdeg(\cP)$ since $L\in \myG$.
\end{proof}

As a consequence of the previous lemmas, we have the following degree formula relating degrees and base point locus multiplicity (see notation in Section \ref{S-intro}).

\para

\begin{theorem}\label{theorem-formulaP}
$\betap= \deg(\cP)^2 - \deg(\myS)\cdot \mapdeg(\cP)$.
\end{theorem}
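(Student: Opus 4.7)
The plan is to compute the total $\ot$-degree of the resultant $\Res_{t_3}(K_1^L, K_2^L)$ in two different ways and equate them. On the one hand, this resultant factors as $\content_{\ox} \cdot \Primpart_{\ox}$, and the $\ot$-degrees of these two factors have already been identified in Theorem~\ref{theorem-BetaP-K1K2} and Lemma~\ref{lemma-formula}. On the other hand, $K_1^L$ and $K_2^L$ are homogeneous polynomials in $\ot=(t_1,t_2,t_3)$ of degree $\deg(\cP)$, so by the classical B\'ezout-type property of the Sylvester resultant, the $t_3$-resultant is homogeneous in $(t_1,t_2)$ of degree exactly $\deg(\cP)^2$, provided the leading coefficients w.r.t.\ $t_3$ do not drop.

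First I would choose $L$ in the open subset $\Omega \cap \Omega' \subset \myGG$, which is non-empty since $\myGG$ is irreducible and both $\Omega$ (from Lemma~\ref{lemm-open-1}) and $\Omega'$ (from Lemma~\ref{lemma-formula}) are non-empty Zariski open subsets. For such an $L$, both conclusions apply: Theorem~\ref{theorem-BetaP-K1K2} gives
\[
\deg_{\ot}\!\bigl(\content_{\ox}(\Res_{t_3}(K_1^L,K_2^L))\bigr) = \betap,
\]
and Lemma~\ref{lemma-formula} gives
\[
\deg_{\ot}\!\bigl(\Primpart_{\ox}(\Res_{t_3}(K_1^L,K_2^L))\bigr) = \deg(\myS)\cdot\mapdeg(\cP).
\]

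Next I would check that $\deg_{t_3}(K_1^L)=\deg_{t_3}(K_2^L)=\deg(\cP)$. Since $L \in \myGG$, one has $L_4(\cP)=p_4$ and $L_i(\cP)$ is a $\K$-linear combination of $p_1,p_2,p_3$ for $i\in\{1,2,3\}$. The standing hypothesis $(0{:}0{:}1)\notin \myC(p_i)$ gives $\deg_{t_3}(p_i)=\deg(\cP)$ for all $i$, and then the leading coefficients in $t_3$ of $K_1^L = x_4 L_1(\cP)-x_1 L_4(\cP)$ and $K_2^L = x_4 L_3(\cP)-x_3 L_4(\cP)$ are non-zero because $x_1,x_3$ are transcendental over $\K$. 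Thus the Sylvester resultant $\Res_{t_3}(K_1^L,K_2^L)$ is a polynomial in $\K(\ox)[t_1,t_2]$ that is homogeneous in $(t_1,t_2)$ of degree $\deg(\cP)\cdot\deg(\cP) = \deg(\cP)^2$.

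Finally I would combine these facts: writing $\Res_{t_3}(K_1^L,K_2^L) = \content_{\ox}\cdot\Primpart_{\ox}$ and comparing total $\ot$-degrees yields
\[
\deg(\cP)^2 \;=\; \betap \;+\; \deg(\myS)\cdot\mapdeg(\cP),
\]
which is the claimed formula. The substantive work has already been done in Theorem~\ref{theorem-BetaP-K1K2} and Lemma~\ref{lemma-formula}; the main ``obstacle'' here is really just the bookkeeping of verifying that the two resultant degree statements can be applied simultaneously for a common $L$ and that no degree drop occurs in $t_3$. Both concerns are handled by the genericity of $L$ and by the hypothesis that $(0{:}0{:}1)$ avoids each curve $\myC(p_i)$.
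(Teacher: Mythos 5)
Your proposal is correct and takes essentially the same route as the paper: pick $L \in \Omega \cap \Omega'$, factor $\Res_{t_3}(K_1^L,K_2^L)$ into its $\ox$-content and $\ox$-primitive part, and invoke Theorem~\ref{theorem-BetaP-K1K2} and Lemma~\ref{lemma-formula} to identify the $\ot$-degrees of the two factors, while the total $\ot$-degree of the resultant equals $\deg(\cP)^2$ because $\deg_{t_3}(K_i^L)=\deg(\cP)$. Your explicit verification that the $t_3$-leading coefficients do not drop (via the $-x_1 p_4$ and $-x_3 p_4$ terms, which depend on transcendentals absent from the other summand) matches the brief degree remark in the paper's proof.
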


 \begin{proof}
Let $L \in \Omega \cap \Omega'$, where $\Omega$ is from Lemma \ref{lemm-open-1} (and Theorem~\ref{theorem-BetaP-K1K2}) and $\Omega'$ is from Lemma \ref{lemma-formula}.  Since $p_4(0,0,1) \ne 0$ and $L \in \myGG$, we know that $\deg_{t_3}(K_i^L) =\deg(L(\cP))=\deg(\cP)$.  Then
\[
\underbrace{\Res_{t_3}(K_{1}^{L},K_{2}^{L})}_{\begin{array}{l} \text{The degree}\\
\text{is } \deg(\cP)^2 \end{array}}
\ = \ \underbrace{\content_{\ox}(\Res_{t_3}(K_{1}^{L},K_{2}^{L}))}_{\begin{array}{l} \text{By Theorem \ref{theorem-BetaP-K1K2}, the}\\
\text{degree is } \betap \end{array}} \cdot  \underbrace{\Primpart_{\ox}(\Res_{t_3}(K_{1}^{L},K_{2}^{L}))}_{\begin{array}{l} \text{By Lemma \ref{lemma-formula}, the degree}\\
\text{is } \deg(\myS)\hskip1pt \mapdeg(\cP) \end{array}},
\]
where ``degree'' means the degree in $\{t_1,t_2\}$.
 \end{proof}

 \para

 When we combine this theorem with Proposition \ref{prop-HSmult}, we get a new proof of the well-known degree formula (compare with \cite{Cox2001}).

 \para

\begin{corollary}\label{cor-formulaHS}
$\deg(\myS)\cdot \mapdeg(\cP) = \deg(\cP)^2- \sum_{A \in \myB(\cP)} e(I_A,R_A)$.
\end{corollary}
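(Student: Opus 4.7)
My plan is to observe that this corollary is an immediate consequence of combining two results already established in the paper: Theorem~\ref{theorem-formulaP} and Corollary~\ref{cor-HSmult}. No additional geometric or algebraic argument is required; the substantive content has been packaged into those earlier statements, and the corollary is just the translation of the degree formula into the language of Hilbert--Samuel multiplicities.

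Concretely, I would proceed as follows. First, recall from Theorem~\ref{theorem-formulaP} the identity
\[
\mult(\myB(\cP)) = \deg(\cP)^{2} - \deg(\myS)\cdot \mapdeg(\cP),
\]
where $\mult(\myB(\cP))$ is defined in Definition~\ref{def-multiplicity-BP} as the sum of local intersection multiplicities $\mult_{A}(\myC(W_1),\myC(W_2))$ over $A\in\myB(\cP)$. Next, invoke Corollary~\ref{cor-HSmult} (which is a direct consequence of Proposition~\ref{prop-HSmult}, where the identification of intersection multiplicity with Hilbert--Samuel multiplicity is carried out via reduction ideals and the generic-linear-combination argument of \cite{BCD}) to rewrite the left-hand side as $\sum_{A\in\myB(\cP)} e(I_A,R_A)$. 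Rearranging gives exactly the claimed equation.

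Since no nontrivial step remains, there is essentially no obstacle; the only thing worth emphasizing in the write-up is that this recovers the degree formula of \cite{Cox2001} by an argument independent of Segre classes or Fulton's intersection theory, relying instead purely on resultants and the structural results developed in Section~\ref{sec-BP-parametrizations}. I would therefore keep the proof to two lines: cite Corollary~\ref{cor-HSmult} to replace $\mult(\myB(\cP))$ by $\sum_{A\in\myB(\cP)} e(I_A,R_A)$ in Theorem~\ref{theorem-formulaP}, and conclude.
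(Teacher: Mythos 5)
Your proof is correct and matches the paper's own approach: the paper likewise obtains this corollary by combining Theorem~\ref{theorem-formulaP} with Proposition~\ref{prop-HSmult} (of which Corollary~\ref{cor-HSmult} is just the summed restatement). Citing Corollary~\ref{cor-HSmult} instead of Proposition~\ref{prop-HSmult} is an inessential repackaging, so nothing further is needed.
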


\para

\begin{example}\label{example-verifythm}
{\rm Consider the surface $\myS$ parametrized by
\[\cP(\ot)=(p_1:\cdots:p_4)=(t_2^2t_3+t_1^3:t_1^2t_3+t_2^3:t_1t_2t_3:t_2^2t_3)
\]
from Example \ref{example-fromCox}, where we computed that $\betap = 4$.  One may also check that  $\deg(\cP)=3$, $\deg(\myS)=5$ and $\mapdeg(\cP)=1$ (using results from \cite{PS-grado}). Thus
\[
 \betap=4 = 3^2 - 5\cdot 1 = \deg(\cP)^2- \deg(\myS)\cdot \mapdeg(\cP),
 \]
 as predicted by Theorem \ref{theorem-formulaP}. }
 \end{example}

\para

Applying Theorems \ref{theorem-betaW1LW2L}, \ref{theorem-BetaP-K1K2} and \ref{theorem-formulaP}, and Lemma \ref{lemma-formula},  we get the following resultant-based formula for the degree of the implicit equation of the surface $\myS$.

\para

\begin{theorem}\label{theorem-formulaS}  \
\begin{enumerate}
\item For every $L\in \myG$, we have
$$\begin{array}{lll} \deg(\myS)&= &\dfrac{\deg(\cP)^2-\deg_{\ot}(\content_{\{\ox,\oy\}}(\Res_{t_3}(W_{1}^{L},W_{2}^{L})))}
{\mapdeg(\cP)}.
\\
\noalign{\medskip}
&= &\dfrac{\deg_{\ot}(\Primpart_{\{ \ox\}}(\Res_{t_3}(W_{1}^{L},W_{2}^{L})))}
{\mapdeg(\cP)}.
\end{array}
$$
\item For every $L$ in the open set $\Omega$ introduced in Lemma \ref{lemm-open-1}, we have
 $$\begin{array}{lll}
\deg(\myS)&=&\dfrac{\deg(\cP)^2-\deg_{\ot}(\content_{\{\ox\}}(\Res_{t_3}(K_{1}^{L},K_{2}^{L})))}
{\mapdeg(\cP)} \\
\noalign{\medskip}
&= &\dfrac{\deg_{\ot}(\Primpart_{\{ \ox\}}(\Res_{t_3}(K_{1}^{L},K_{2}^{L})))}
{\mapdeg(\cP)}.
\end{array}$$
\end{enumerate}
\end{theorem}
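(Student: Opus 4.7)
The strategy is to derive this theorem as an essentially immediate corollary of the machinery already in place. Rearranging Theorem \ref{theorem-formulaP} gives
\[ \deg(\myS) = \frac{\deg(\cP)^2 - \betap}{\mapdeg(\cP)}, \]
so each of the four formulas in the statement reduces to rewriting $\betap$ as the $\ot$-degree of a suitable content via Theorem \ref{theorem-betaW1LW2L} or Theorem \ref{theorem-BetaP-K1K2}, and then exchanging ``$\deg(\cP)^2 - \deg_{\ot}(\text{content})$'' for ``$\deg_{\ot}(\text{primitive part})$'' via the factorization $R = \content(R)\cdot\Primpart(R)$.

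For part (1), I would substitute $\betap = \deg_{\ot}(\content_{\{\ox,\oy\}}(\Res_{t_3}(W_{1}^{L},W_{2}^{L})))$ from Theorem \ref{theorem-betaW1LW2L} to obtain the first equality, valid for every $L \in \myG$. For the second equality, I would verify that $\Res_{t_3}(W_{1}^{L},W_{2}^{L})$, viewed as an element of $\K[\ox,\oy,t_1,t_2]$, is homogeneous in $\{t_1,t_2\}$ of total degree $\deg(\cP)^2$. This is the standard elimination-theoretic fact that the resultant of two forms of degrees $m,n$ in one variable whose coefficients are homogeneous of complementary degrees in additional variables is itself homogeneous of degree $mn$ in those additional variables; the prerequisite $\deg_{t_3}(W_i^L) = \deg(\cP)$ is guaranteed by the standing hypothesis $(0:0:1)\notin \myC(p_i)$. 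Since content and primitive part multiply to recover the resultant up to a field element, their $\ot$-degrees sum to $\deg(\cP)^2$, and the second equality is immediate.

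For part (2), the argument has the same structure: substitute the formula for $\betap$ from Theorem \ref{theorem-BetaP-K1K2} (valid for $L \in \Omega$) to get the content version, then use $\deg_{\ot}(\Res_{t_3}(K_{1}^{L},K_{2}^{L}))=\deg(\cP)^2$ to pass to the primitive-part version. The degree identity on this resultant follows from the same elimination-theoretic principle, once one notes that $L\in \Omega\subset \myGG$ together with $p_4(0,0,1)\ne 0$ forces $\deg_{t_3}(K_i^L)=\deg(\cP)$; in fact this computation was already carried out in the first paragraph of the proof of Lemma \ref{lemm-open-1}.

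The main obstacle, if there is one, is purely bookkeeping: correctly identifying the total $\ot$-degree of each resultant and being careful about which variable set the content and primitive part are taken with respect to in each assertion. All the substantive content has been done in Theorems \ref{theorem-betaW1LW2L}, \ref{theorem-BetaP-K1K2} and \ref{theorem-formulaP}, so this final theorem amounts to a short algebraic rearrangement once the homogeneity and $\ot$-degree of the two resultants are in hand.
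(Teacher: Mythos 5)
Your proposal is correct and matches the paper's own (unwritten) argument: the theorem is indeed just Theorem~\ref{theorem-formulaP} combined with Theorem~\ref{theorem-betaW1LW2L} (resp.\ Theorem~\ref{theorem-BetaP-K1K2}), together with the observation that $\Res_{t_3}(W_1^L,W_2^L)$ and $\Res_{t_3}(K_1^L,K_2^L)$ are homogeneous of degree $\deg(\cP)^2$ in $\{t_1,t_2\}$, so that the $\ot$-degrees of content and primitive part sum to $\deg(\cP)^2$. One small caveat you flag but do not resolve: in part~(1) the theorem as printed pairs $\content_{\{\ox,\oy\}}$ with $\Primpart_{\{\ox\}}$, and your ``content times primitive part recovers the resultant'' step requires the same variable set on both factors. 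In light of Lemma~\ref{lemm-open-1}(4), which uses $\Primpart_{\{\ox,\oy\}}$ for the $W$-resultant, the $\Primpart_{\{\ox\}}$ in part~(1) is almost certainly a misprint for $\Primpart_{\{\ox,\oy\}}$; with that correction your argument closes cleanly.
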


\para

\begin{remark}\label{rem-hipotesis}{\rm
At the beginning of this section, we imposed two main hypotheses, namely, that $(0:0:1)\not\in \myC(p_i)$ for all $i$ and that $p_4\neq 0$. The first hypothesis was used to relate  $\betap$ with the resultant, and the second was used in Lemma \ref{lemma-formula} to allow the dehomogenization w.r.t.\ the fourth parametrization component. Let us show that the formula in Theorem \ref{theorem-formulaP} is still valid in both cases. If the first hypothesis fails, we can apply a projective transformation $\ell(\ot)$ such that $\cP^*(\ot)=\cP(\ell(\ot))$ satisfies the condition. In this situation, observe that $\deg(\cP^*)=\deg(\cP)$ that $\betap=\mult(\myB(\cP^*))$, and that $\mapdeg(\cP^*)=\mapdeg(\ell)\,\mapdeg(\cP)=\mapdeg(\cP)$. Therefore, since the formula holds for $\cP^*$ it also holds for $\cP$.

On the other hand, if $p_4=0$, we can simply take $L\in \myG$ such that $L(\cP)$ satisfies the hypothesis. Now, the reasoning is as in the previous paragraph.}
\end{remark}

\para

The following corollaries are direct consequences of Theorem \ref{theorem-formulaP}. We observe that Corollary \ref{cor-1} improves the formulae given in Theorem 1 in \cite{SchichoGrado}.

\para

\begin{corollary}\label{cor-1}
$\deg(\cP)\geq \sqrt{\deg(\myS)\,\mapdeg(\cP)} \geq \sqrt{\deg(\myS)}.$
\end{corollary}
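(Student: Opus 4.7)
The plan is to read off the corollary directly from the degree formula of Theorem \ref{theorem-formulaP}, which states
\[
\betap = \deg(\cP)^2 - \deg(\myS)\cdot \mapdeg(\cP).
\]
First I would observe that the left-hand side is nonnegative: by Definition \ref{def-multiplicity-BP}, $\betap$ is a sum of intersection multiplicities $\mult_A(\myC(W_1),\myC(W_2))$ over base points $A\in\myB(\cP)$, and each such intersection multiplicity is a nonnegative integer (equivalently, by Proposition \ref{prop-HSmult}, a sum of Hilbert--Samuel multiplicities, which are nonnegative). Consequently
\[
\deg(\cP)^2 \geq \deg(\myS)\cdot \mapdeg(\cP),
\]
and taking square roots yields the first inequality of the corollary.

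For the second inequality, I would use that $\mapdeg(\cP)$ is, by definition, the cardinality of the generic fiber of the dominant rational map $\cP\colon \mathbb{P}^2(\K)\dashrightarrow \myS$; in particular $\mapdeg(\cP)\geq 1$. Multiplying by the nonnegative quantity $\deg(\myS)$ gives $\deg(\myS)\cdot\mapdeg(\cP)\geq \deg(\myS)$, and taking square roots completes the chain.

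There is no real obstacle here; the only thing one needs to make sure of is the two sign/positivity observations (nonnegativity of $\betap$ and $\mapdeg(\cP)\geq 1$), both of which are immediate from the definitions given in the paper. The corollary is therefore just a numerical consequence of Theorem \ref{theorem-formulaP}.
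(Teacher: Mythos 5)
Your proof is correct and matches the paper's intent: the paper simply declares Corollary \ref{cor-1} a direct consequence of Theorem \ref{theorem-formulaP}, and your argument spells out exactly the two positivity observations ($\betap \geq 0$ and $\mapdeg(\cP) \geq 1$) needed to read the inequalities off the degree formula.
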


\para

\begin{corollary}\label{cor-2}
If $\cP$ is birational, then $\deg(\cP)^2-\betap=\deg(\myS)$.
\end{corollary}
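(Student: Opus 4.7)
The plan is to derive this corollary as an immediate specialization of Theorem \ref{theorem-formulaP}. Recall from the Notation in Section \ref{S-intro} that $\mapdeg(\cP)$ denotes the cardinality of the generic fiber of $\cP$, viewed as a rational map onto $\myS$. By definition, $\cP$ being birational is exactly the condition that $\mapdeg(\cP)=1$, since a dominant rational map between varieties of the same dimension is birational if and only if it is generically one-to-one.

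With $\mapdeg(\cP)=1$ in hand, I simply substitute into the degree formula from Theorem \ref{theorem-formulaP}:
\[
\betap \;=\; \deg(\cP)^2 \;-\; \deg(\myS)\cdot \mapdeg(\cP) \;=\; \deg(\cP)^2 \;-\; \deg(\myS).
\]
Rearranging yields the stated equality $\deg(\cP)^2-\betap=\deg(\myS)$.

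There is essentially no obstacle here; the only conceptual point worth recording is the identification of birationality with $\mapdeg(\cP)=1$, which is standard and can be cited from \cite{Harris:algebraic} consistently with the notation fixed in Section~\ref{S-intro}. Note also that the hypotheses originally imposed at the start of Section~\ref{sec-BP-parametrizations} (namely that $p_4\ne 0$ and $(0:0:1)\notin \myC(p_i)$) are not restrictive by Remark~\ref{rem-hipotesis}, so the corollary holds for an arbitrary birational parametrization $\cP$ of $\myS$.
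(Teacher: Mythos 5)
Your proof is correct and is exactly the intended derivation: the paper presents this corollary as a direct consequence of Theorem~\ref{theorem-formulaP}, obtained by substituting $\mapdeg(\cP)=1$ for a birational parametrization. The additional remarks about Remark~\ref{rem-hipotesis} and the identification of birationality with generic-fiber cardinality one are consistent with the paper's conventions.
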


\para

\begin{corollary}\label{cor-3}
A rational surface whose degree is not the square of a natural number cannot be birationally parametrized without base points in $\Bbb{P}^2(\Bbb{K})$.
\end{corollary}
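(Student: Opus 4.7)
The plan is to argue by contrapositive, reducing directly to Corollary \ref{cor-2}. Suppose, for contradiction, that $\myS$ is a rational surface whose degree $\deg(\myS)$ is not a perfect square, yet $\myS$ admits a birational parametrization $\cP\colon \mathbb{P}^2(\K) \dashrightarrow \myS$ with $\myB(\cP) = \emptyset$.

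First I would observe that $\myB(\cP) = \emptyset$ forces $\betap = 0$. This is immediate from Definition \ref{def-multiplicity-BP}, where $\mult(\myB(\cP))$ is defined as the sum $\sum_{A \in \myB(\cP)} \mult_A(\myC(W_1),\myC(W_2))$; if the index set is empty, the sum is zero. (Equivalently, via Proposition \ref{prop-HSmult}, the sum of Hilbert-Samuel multiplicities over an empty set is zero.) Next, since $\cP$ is birational, Corollary \ref{cor-2} applies and yields
\[
\deg(\myS) \;=\; \deg(\cP)^2 - \betap \;=\; \deg(\cP)^2.
\]
Since $\deg(\cP)$ is a natural number, $\deg(\myS)$ is then the square of a natural number, contradicting the assumption on $\myS$.

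There is essentially no hard step here: the entire content is packaged into Corollary \ref{cor-2}, which in turn rests on Theorem \ref{theorem-formulaP}. The only mild subtlety worth flagging explicitly in the write-up is the conventional interpretation of the empty sum in \eqref{eq-multBP}; beyond that, the proof is a one-line specialization of the degree formula to the birational, base-point-free case. Accordingly, I would keep the proof to just a few lines.
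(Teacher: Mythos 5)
Your argument is correct and is exactly the intended one: the paper lists Corollary \ref{cor-3} among the direct consequences of Theorem \ref{theorem-formulaP}, and specializing Corollary \ref{cor-2} to the base-point-free case (so $\betap=0$) immediately gives $\deg(\myS)=\deg(\cP)^2$. Nothing to add.
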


\para

We observe that although the presence of base points might be inevitable (see Corollary \ref{cor-3}), one may reparametrize so that they are all on a line, in particular on the line at infinity (see Theorem 4.1 of \cite{SSV}).

\section{Rational maps of $\proj2$}\label{sec-formula-birational}

In this section, we analyze the base points of rational maps  $\proj2 \dashrightarrow \proj2$ and adapt the results in the previous section to this case. To begin, let
\begin{equation}\label{eq-S}
\begin{array}{cccc}
\cS: & \proj2 &  \dashrightarrow & \proj2   \\
 &\ot=(t_1:t_2:t_3) & \longmapsto & \cS(\ot)=(s_1(\ot):s_2(\ot):s_3(\ot)),
\end{array}
\end{equation}
where  $\gcd(s_1,s_2,s_3)=1$, be a dominant rational transformation of $\proj2$ and let $\mapdeg(\cS)$  denote the degree of the map $\cS$.  Similarly, as in Section \ref{sec-BP-parametrizations}, we \textbf{assume} that $(0:0:1)\not\in \myC(s_i)$ for $i=1,2,3$.
Later in Remark \ref{rem-hipotesisS}, we will see that our results hold even when this hypothesis is not satisfied.

\para

\begin{definition}
We say that $A\in \proj2$ is a \textsf{base point} of $\cS(\ot)$ if $s_1(A)=s_2(A)=s_3(A) = 0$. That is, the
base points of $\cS$ are the intersection points of the projective plane curves, $\myC(s_i)$, defined over $\mathbb{K}$ by $s_i(\ot)$, $i=1,2,3$. Let us denote by $\myB(\cS)$ the set of base points of $\cS$; i.e., $\myB(\cS)=\myC(s_1)\cap \myC(s_2) \cap \myC(s_3)$.
\end{definition}

\para

First we introduce the polynomials
\begin{equation}\label{eq-V}
\begin{array}{l}
 V_{1}=\sum_{i=1}^{3} x_i\, s_i(\ot) \in \mathbb{K}(\ox,\oy)[\ot] \\
 \noalign{\medskip}
 V_{2}=\sum_{i=1}^{3} y_i \,s_i(\ot)\in
 \mathbb{K}(\ox,\oy)[\ot],
 \end{array}
\end{equation}
where $x_i,y_j$ are new variables; compare with \eqref{eq-W}. Then, as we did in Section \ref{sec-BP-parametrizations}, we have the following notion of multiplicity.

\para

\begin{definition}
For $A\in \myB(\cS)$, we define the \textsf{multiplicity of intersection of $A$} as $\mult_{A}(\myC(V_{1}),\myC(V_2))$.

In addition, we define the \textsf{multiplicity of the base points locus of ${\cS}$}, denoted $\betas$, as
\[
\betas:=\sum_{A\in \myB({\cS})} \mult_A(\myC(V_{1}),\myC(V_{2}))
\]
\end{definition}

For every $L\in \myGS$ (see the notation in Section \ref{S-intro}) we introduce the polynomials (compare with \eqref{eq-WL})
\begin{equation}\label{eq-VL}
\begin{array}{l}
 V_{1}^{L}=\sum_{i=1}^{3} x_i\,L_i(\cS) \in \mathbb{K}(\ox,\oy)[\ot] \\
 \noalign{\medskip}
 V_{2}^{L}=\sum_{i=1}^{3} y_i \,L_i(\cS)\in
 \mathbb{K}(\ox,\oy)[\ot],
 \end{array}
\end{equation}

In this situation,  Proposition \ref{Prop-BasePointS} extends naturally to the case of the map $\cS$, and hence the following theorem holds (compare with Theorem \ref{theorem-betaW1LW2L}).

\para

\begin{theorem}\label{theorem-betaV1LV2L} If $L\in \myGS$, then
$$\betas=\deg_{\ot}(\content_{\{\ox,\oy\}}(\Res_{t_3}(V_{1}^{L},V_{2}^{L}))). $$
\end{theorem}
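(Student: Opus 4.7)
The plan is to mirror the proof of Theorem \ref{theorem-betaW1LW2L} essentially verbatim, exploiting the fact that the passage from a parametrization $\cP: \proj2 \dashrightarrow \projtres$ to a rational map $\cS: \proj2 \dashrightarrow \proj2$ only shortens the defining sums from four to three summands. The key tool will be Theorem 5.3 of \cite[p.\ 111]{Walker}, which expresses the $\ot$-degree of the content of $\Res_{t_3}(V_1^L, V_2^L)$ as the sum of intersection multiplicities at those points of $\myC(V_1^L) \cap \myC(V_2^L)$ that lie in $\proj2$ and are colinear with $(0:0:1)$. The goal is to show that these are exactly the base points of $\cS$ and that their multiplicities sum to $\betas$.

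First I would establish the analog of Proposition \ref{Prop-BasePointS}(1)--(3) in the present setting. Part (1), that $\myC(V_1^L)$ and $\myC(V_2^L)$ share no common component, follows as before: any common factor of $V_1^L$ and $V_2^L$ would necessarily lie in $\K[\ot]$ and, via the invertibility of $L$, divide each $s_j$, contradicting $\gcd(s_1,s_2,s_3)=1$. Part (2), the colinearity property with $(0:0:1)$, uses the same parametrization-of-the-line argument: restricting $V_1^L$ and $V_2^L$ to a line $t_1 = \gamma t_2$ through $(0:0:1)$ yields roots lying in the algebraic closures of $\K(\ox)$ and $\K(\oy)$ respectively, so a coincident root must in fact lie in $\K$. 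Part (3), $\myB(\cS) = \myC(V_1^L) \cap \myC(V_2^L) \cap \proj2$, is immediate because $L$ is invertible and $V_i^L$ is a $\K(\ox,\oy)$-linear combination of $s_1, s_2, s_3$ in which the coefficients are algebraically independent over $\K$.

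Next I would verify that $(0:0:1) \notin \myC(V_i^L)$, as required by Walker's theorem. Evaluating gives $V_1^L(\ox, 0, 0, 1) = \sum_{i=1}^3 x_i\, L_i(\cS)(0,0,1)$, which vanishes only if every $L_i(\cS)(0,0,1)$ vanishes; by invertibility of $L$ this would force every $s_j(0,0,1)$ to vanish, contradicting the standing hypothesis $(0:0:1) \notin \myC(s_j)$. With (1)--(3) and the non-containment established, Theorem 5.3 of \cite{Walker} applies and yields
\[
\betas = \deg_{\ot}\bigl(\content_{\{\ox,\oy\}}(\Res_{t_3}(V_1^L, V_2^L))\bigr).
\]
I do not expect a genuine obstacle: every step is a straightforward transcription from the proof of Theorem \ref{theorem-betaW1LW2L}, and no new phenomenon arises when the target drops from $\projtres$ to $\proj2$; if anything, the shorter sums mildly simplify the bookkeeping.
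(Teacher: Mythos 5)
Your proof is correct and takes essentially the same route as the paper, which itself dispatches Theorem \ref{theorem-betaV1LV2L} in one line by observing that Proposition \ref{Prop-BasePointS} and the proof of Theorem \ref{theorem-betaW1LW2L} carry over verbatim when the four-term sums $W_i^L$ are replaced by the three-term sums $V_i^L$. You fill in exactly the details that analogy requires, including the check that $(0:0:1)\notin\myC(V_i^L)$ and the adaptation of parts (1)--(3) of Proposition \ref{Prop-BasePointS}, before invoking Theorem 5.3 of Walker just as the paper does.
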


\para

For $L\in \myGS$, we consider the polynomials (compare with \eqref{eq-K1K2g})
\begin{equation}\label{eq-J1J2}
\begin{array}{lll}
J_{1}^{L}(\ox,\ot) &=& V_{1}^{L}(x_3,0,-x_1,\ot)=x_{3} L_1(\cS)-x_1 L_3(\cS)\in \mathbb{K}(\ox)[\ot],\\
\noalign{\medskip}
J_{2}^{L}(\ox,\ot) &=& V_{2}^{L}(0,x_3,-x_2,\ot)=x_3L_2(\cS)-x_2L_3(\cS) \in \mathbb{K}(\ox)[\ot].
\end{array}
\end{equation}
Similar to Section \ref{sec-formula-birational}, the corresponding versions of  Lemma \ref{lemm-open-1} and Theorem \ref{theorem-BetaP-K1K2} hold. We state here the version of Theorem \ref{theorem-BetaP-K1K2} for $\cS$.

\para

\begin{theorem}\label{theorem-BetaP-J1J2}
There exists a non-empty open subset $\Omega_{\cS}$ of $\myGS$ such that for  $L\in \Omega_{\cS}$, we have
\[ \betas=\deg_{\ot}(\content_{\ox}(\Res_{t_3}(J_{1}^{L},J_{2}^{L}))). \]
\end{theorem}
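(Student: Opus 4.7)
The plan is to follow the blueprint of Theorem \ref{theorem-BetaP-K1K2} verbatim, with the pair $(W_i^L, K_i^L)$ replaced by $(V_i^L, J_i^L)$ and the four components $p_1,\ldots,p_4$ replaced by the three components $s_1,s_2,s_3$. Since Theorem \ref{theorem-betaV1LV2L} already expresses $\betas$ as the $\ot$-degree of $\content_{\{\ox,\oy\}}(\Res_{t_3}(V_1^L,V_2^L))$ for every $L\in \myGS$, it suffices to produce a non-empty Zariski open $\Omega_{\cS}\subseteq \myGS$ on which that number coincides with $\deg_{\ot}(\content_{\ox}(\Res_{t_3}(J_1^L,J_2^L)))$. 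The bulk of the work is therefore the proof of an analogue of Lemma \ref{lemm-open-1} in the $\proj2\dashrightarrow\proj2$ setting.

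The first step is the specialization identity. Set $\cA_{\cS}:=(x_3,0,-x_1,0,x_3,-x_2,t_1,t_2)$ so that $J_i^L$ is obtained from $V_i^L$ by substituting the $(x_j,y_j)$-blocks as prescribed by $\cA_{\cS}$. Applying Lemma 4.3.1 of \cite{Winkler} to the associated ring homomorphism gives $\Res_{t_3}(V_1^L,V_2^L)(\cA_\cS)=\Res_{t_3}(J_1^L,J_2^L)$. Two observations are needed: the leading $t_3$-coefficients of $V_i^L$ do not drop under this substitution (this uses $(0:0:1)\notin \myC(s_i)$ and the fact that the summand $x_3\,L_3(\cS)$ contributes a term independent of the other $x_j$); and $\Res_{t_3}(J_1^L,J_2^L)\ne 0$, which amounts to $\gcd(L_1(\cS),L_3(\cS))=\gcd(L_2(\cS),L_3(\cS))=1$. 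Both conditions cut out a first non-empty open $\Omega_1\subset \myGS$, exactly as in \eqref{eq-omega1def}, using $\gcd(s_1,s_2,s_3)=1$.

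The main obstacle, paralleling the hard part of the proof of Lemma \ref{lemm-open-1}, is the construction of auxiliary polynomials $A_1,A_2,A_3\in \K[\oz]$ whose simultaneous non-vanishing on a shrinking $\Omega_{\cS}\subseteq \Omega_1$ forces the degree-preserving behaviour of the primitive part under the specialization $\cA_\cS$. Concretely, one writes, for a generic $\cL\in \myGS$, the factorization $R^{\cL}:=\Res_{t_3}(V_1^\cL,V_2^\cL) = N(\oz)\,B(t_1,t_2)\,M(\oz,\ox,\oy,t_1,t_2)$ with $M$ primitive in $\{\ox,\oy\}$ and $B$ the base-point factor of Theorem \ref{theorem-betaV1LV2L}; the independence of $N$ on $\ot$ is forced by contradicting Theorem \ref{theorem-betaV1LV2L} if $N(\oz^L,t_1,t_2)$ were non-constant for a specialization in $\Omega_1$. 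Analogously $T:=\Res_{t_3}(V_1,V_2)=B\,M^*$ and a direct algebraic manipulation gives $T(\cT)=S^{\cL}:=\Res_{t_3}(J_1^\cL,J_2^\cL)$ for a suitable tuple $\cT$ built from the $z_{i,j}$'s. Then $A_1$ is defined via a non-zero coefficient of $\Res_{t_2}(M^*(\cT),B)$ to guarantee $N(\oz^L)\ne 0$ and primitivity of $M^*(\cT^L)$ w.r.t.\ $\ox$; $A_2$ via a non-zero coefficient of $M(\oz,\cA_{\cS})$ so that $\deg_{\ot}(M(\oz^L,\cA_{\cS}))=\deg_{\ot}(M(\oz,\cA_{\cS}))$; and $A_3$ via a non-zero coefficient of $\Res_{t_2}(M,B)$ so that $M(\oz^L,\ox,\oy,t_1,t_2)$ is primitive in $\{\ox,\oy\}$. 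With these in place, the chain of identities in the proof of Lemma \ref{lemm-open-1} transfers verbatim, yielding items analogous to (1)--(5); combining the resulting content equality with Theorem \ref{theorem-betaV1LV2L} gives the stated formula for $\betas$.
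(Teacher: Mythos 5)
Your proposal is correct and matches the paper's own approach, which is simply to transfer Lemma \ref{lemm-open-1} and Theorem \ref{theorem-BetaP-K1K2} to the $\proj2\dashrightarrow\proj2$ setting; you correctly identify the few points where the transfer is not completely verbatim, namely the specialization tuple $\cT$ built from the $z_{i,j}$'s and the leading-coefficient genericity condition on $L_3(\cS)$, which is automatic in the surface case (because $L\in\myGG$ forces $L_4(\cP)=p_4$) but must be imposed as an extra open condition here since $L$ ranges over the full group $\myGS$.
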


\para

The results in the last part of Section \ref{sec-BP-parametrizations} involve  surface parametrizations in $\Bbb{P}^3(\Bbb{K})$. In order to apply these results to a map $\cS$ as in \eqref{eq-S}, we consider the map
\begin{equation}\label{eq-ParamAssToS}
\begin{array}{llll}
\cP^{\cS}: & \mathbb{P}^{2}(\K) &\dashrightarrow &\myS^{\cS} \subset \mathbb{P}^{3}(\K) \\
& \ot & \longmapsto & (s_1(\ot):s_2(\ot):s_2(\ot):s_3(\ot))
\end{array}
\end{equation}
We observe that the rank of the Jacobian of $\cS$ is 2, and hence the rank of the Jacobian of $\cP^{\cS}$ is also 2. Therefore, $\myS^{\cS}$ is a surface. Moreover, since $s_i(0,0,1)\neq 0$ for all $i\in \{1,2,3\}$, none of the curves defined by the components of $\cP^{\cS}$ passes through $(0:0:1)$ either. Also note that $\myS^{\cS}$ is not the plane $u_4=0$; rather, $\myS^{\cS}$ is the plane $u_2=u_3$. So $\cP^{\cS}$ satisfies the hypotheses required in Section \ref{sec-BP-parametrizations}. In addition, we clearly have  $\mapdeg(\cS)=\mapdeg(\cP^\cS)$.

\para

Next lemma relates the multiplicities of the base point loci $\myB(\cS)$ and $\myB(\cP^{\cS})$.

\para

\begin{lemma}\label{lemma:betaPbetaS}
$\myB(\cS) = \myB(\cP^\cS)$ and $\betas=\mult(\myB(\cP^\cS))$.
\end{lemma}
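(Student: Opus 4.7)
The plan is to reduce both multiplicities to the same Hilbert--Samuel multiplicity at each base point, using Proposition \ref{prop-HSmult} on the $\cP^\cS$-side and an immediate analog of it on the $\cS$-side.

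For the set equality: since $\cP^\cS = (s_1:s_2:s_2:s_3)$, the four components of $\cP^\cS$ vanish simultaneously at $A \in \P^2(\K)$ exactly when $s_1(A) = s_2(A) = s_3(A) = 0$, which is the definition of $\myB(\cS)$. Hence $\myB(\cP^\cS) = \myB(\cS)$.

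For the multiplicity equality, fix $A \in \myB(\cS) = \myB(\cP^\cS)$. Applying Proposition \ref{prop-HSmult} to $\cP^\cS$, and noting that the ideal $I_A$ of \eqref{eq-HSsetup} associated with $\cP^\cS$ is $\langle \tilde{s}_1, \tilde{s}_2, \tilde{s}_2, \tilde{s}_3 \rangle = \langle \tilde{s}_1, \tilde{s}_2, \tilde{s}_3 \rangle$ (the duplicated generator is redundant), we obtain
\[
\mult_A(\myC(W_1^{\cP^\cS}), \myC(W_2^{\cP^\cS})) \;=\; e(\langle \tilde{s}_1, \tilde{s}_2, \tilde{s}_3\rangle, R_A).
\]
A word-for-word adaptation of the proof of Proposition \ref{prop-HSmult} to the three-polynomial, map-to-$\P^2$ setting should yield
\[
\mult_A(\myC(V_1), \myC(V_2)) \;=\; e(\langle \tilde{s}_1, \tilde{s}_2, \tilde{s}_3\rangle, R_A),
\]
since its proof only uses (i) $\gcd(s_1, s_2, s_3) = 1$, (ii) the regular-sequence property of the generic linear combinations $V_1, V_2$, (iii) the reduction-ideal result from \cite{BCD}, and (iv) the complete-intersection identity for Hilbert--Samuel multiplicity, none of which depends on whether the original map has codomain $\P^2(\K)$ or $\P^3(\K)$. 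Combining the two displays and summing over $\myB(\cS) = \myB(\cP^\cS)$ yields $\betas = \mult(\myB(\cP^\cS))$.

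The one place that needs care is porting the proof of Proposition \ref{prop-HSmult} to the $\P^2 \dashrightarrow \P^2$ setting; I expect no new difficulties, as only the count of parameters changes. A shorter alternative that bypasses this porting entirely is to observe that $W_1^{\cP^\cS} = x_1 s_1 + (x_2 + x_3) s_2 + x_4 s_3$, so the $\K$-algebra embedding $x_1' \mapsto x_1$, $x_2' \mapsto x_2 + x_3$, $x_3' \mapsto x_4$ (and analogously for the $y$'s) extends to the algebraic closures of the respective rational function fields and sends $V_i$ to $W_i^{\cP^\cS}$ for $i = 1, 2$; invariance of intersection multiplicity of plane curves under algebraic field extensions then gives the pointwise equality immediately, with no passage through Hilbert--Samuel multiplicities.
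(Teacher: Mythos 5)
Your main argument is precisely the paper's: both reduce $\betas$ and $\mult(\myB(\cP^\cS))$ to the same sum of Hilbert--Samuel multiplicities by applying Proposition \ref{prop-HSmult} to $\cP^\cS$ and its three-component analog to $\cS$, then noting that $\cS$ and $\cP^\cS$ generate the same ideal locally.

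Your alternative at the end, however, is a genuinely different and arguably cleaner route. Observing that $W_1^{\cP^\cS} = x_1 s_1 + (x_2+x_3) s_2 + x_4 s_3$, the substitution $x_1' \mapsto x_1$, $x_2' \mapsto x_2+x_3$, $x_3' \mapsto x_4$ (and likewise for the $y$'s) defines a $\K$-algebra embedding $\K(\ox',\oy') \hookrightarrow \K(\ox,\oy)$ carrying $V_i$ to $W_i^{\cP^\cS}$; extending to algebraic closures identifies the $V$-side ambient field as a subfield of the $W$-side one. Since each base point $A$ lies in $\proj2$ and the defining polynomials live over the smaller field, the intersection number $\mult_A(\myC(V_1),\myC(V_2))$ is unchanged by passing to the larger algebraically closed field, giving the desired pointwise equality with no reference to Hilbert--Samuel multiplicity, reduction ideals, or the BCD theorem. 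What this buys is independence from the Hilbert--Samuel apparatus, so the lemma no longer leans on Proposition \ref{prop-HSmult} or its unstated $\cS$-analog. One small terminological caveat: the extension $\mathbb{F}' \hookrightarrow \mathbb{F}$ is \emph{not} algebraic (it is an extension of algebraically closed fields of positive transcendence degree); the invariance you need is the stability of local intersection multiplicity at a $\K$-rational point under arbitrary extension of algebraically closed fields containing $\K$ --- exactly the base-change fact already exploited inside the proof of Proposition \ref{prop-HSmult}. With ``algebraic'' corrected to ``arbitrary,'' this alternative stands as a self-contained, more elementary proof.
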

\begin{proof}
The first assertion is obvious since $\cS = (s_1,s_2,s_3)$ and $\cP^\cS =  (s_1,s_2,s_2,s_3)$.  For the second, first note that the analog of Proposition \ref{prop-HSmult} holds for $\cS$, so that $\betas$ is the sum of the Hilbert-Samuel multiplicities of the base points for the ideal generated $\cS$.  Since $\cS$ and $\cP^\cS$ give the same ideal, this equals the sum of the Hilbert-Samuel multiplicities of the base points for the ideal generated by $\cP^\cS$. Hence the sum is $\mult(\myB(\cP^\cS))$ by Proposition \ref{prop-HSmult}.
\end{proof}

\para

In this situation, we can adapt Lemma \ref{lemma-formula} and Theorem \ref{theorem-formulaP} to the case of the map $\cS$ as follows.

\para

\begin{theorem}\label{theorem-formulapbS} \
\begin{enumerate}
\item $\betas=\deg(\cS)^{2}-\mapdeg(\cS)$.
\item There exists a non-empty Zariski open subset $\Omega'_\cS$ of $\myGS$ such that for every $L\in \Omega'_\cS$, we have
\[
\deg_{\ot}(\Primpart_{\{ \ox\}}(\Res_{t_3}(J_{1}^{L},J_{2}^{L})))=\mapdeg(\cS)
\]
\end{enumerate}
\end{theorem}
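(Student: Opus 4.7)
The plan is to deduce both parts from Theorem~\ref{theorem-formulaP} applied to the auxiliary surface parametrization $\cP^\cS$ introduced in \eqref{eq-ParamAssToS}, combined with Theorem~\ref{theorem-BetaP-J1J2}. The text immediately preceding the theorem already verifies that $\cP^\cS$ satisfies the hypotheses of Section~\ref{sec-BP-parametrizations} and records $\deg(\cP^\cS)=\deg(\cS)$ and $\mapdeg(\cP^\cS)=\mapdeg(\cS)$; Lemma~\ref{lemma:betaPbetaS} further supplies $\betas=\mult(\myB(\cP^\cS))$. So essentially everything is already in place.

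For part (1), the key new observation is that the image of $\cP^\cS=(s_1:s_2:s_2:s_3)$ is exactly the hyperplane $\{u_2=u_3\}\subset\mathbb{P}^3(\K)$: the inclusion is obvious from the form of $\cP^\cS$, and equality follows because $\cS$ is dominant. Hence $\deg(\myS^\cS)=1$, and Theorem~\ref{theorem-formulaP} yields
\[
\betas \;=\; \mult(\myB(\cP^\cS)) \;=\; \deg(\cP^\cS)^2 - \deg(\myS^\cS)\cdot \mapdeg(\cP^\cS) \;=\; \deg(\cS)^2 - \mapdeg(\cS).
\]

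For part (2), I would follow the bookkeeping in the proof of Theorem~\ref{theorem-formulaP}, but one step lower in dimension. Since $s_i(0,0,1)\neq 0$ for $i=1,2,3$, we have $\deg_{t_3}(s_i)=\deg(\cS)$, and for $L$ in a suitable nonempty Zariski-open subset $\Omega^{\dagger}_\cS\subset\myGS$ the $t_3$-leading coefficients of $J_1^L$ and $J_2^L$ are both non-zero, so $\Res_{t_3}(J_1^L,J_2^L)$ is a non-zero polynomial homogeneous of degree $\deg(\cS)^2$ in $\{t_1,t_2\}$. Setting $\Omega'_\cS:=\Omega_\cS\cap\Omega^{\dagger}_\cS$, with $\Omega_\cS$ as in Theorem~\ref{theorem-BetaP-J1J2}, and factoring the resultant as content times primitive part w.r.t.\ $\ox$ gives
\[
\deg(\cS)^2 \;=\; \deg_\ot(\Content_\ox(\Res_{t_3}(J_1^L,J_2^L))) \;+\; \deg_\ot(\Primpart_\ox(\Res_{t_3}(J_1^L,J_2^L))).
\]
By Theorem~\ref{theorem-BetaP-J1J2} the first summand equals $\betas$, and by part (1) $\deg(\cS)^2-\betas=\mapdeg(\cS)$, which is exactly the claimed equality.

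The only delicate point is pinning down $\Omega^{\dagger}_\cS$ where the resultant genuinely attains its expected degree $\deg(\cS)^2$ in $\{t_1,t_2\}$; this amounts to non-vanishing of certain $t_3$-leading coefficients and, exactly as in the construction of the open subsets in Lemma~\ref{lemm-open-1}, is a generic (hence Zariski-open) condition on $L\in\myGS$. Beyond that, no genuine obstacle arises: the proof is essentially a reduction of the $\mathbb{P}^2\dashrightarrow\mathbb{P}^2$ setting to the $\mathbb{P}^2\dashrightarrow\mathbb{P}^3$ setting via the trivial ``doubling'' of the middle coordinate.
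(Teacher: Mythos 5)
Your proposal is correct and follows essentially the same route as the paper: part (1) is obtained by applying Theorem~\ref{theorem-formulaP} to the auxiliary parametrization $\cP^\cS$ of the hyperplane $\{u_2=u_3\}$ (so $\deg(\myS^\cS)=1$) together with Lemma~\ref{lemma:betaPbetaS}, and part (2) is obtained by splitting $\Res_{t_3}(J_1^L,J_2^L)$ into content times primitive part on a suitable open set where the resultant has full degree $\deg(\cS)^2$, then invoking Theorem~\ref{theorem-BetaP-J1J2} and part (1). The only difference is cosmetic: the paper spells out the polynomial $A_1(\oz)$ defining the auxiliary open set $\Omega_4$ (your $\Omega^{\dagger}_\cS$) via the $t_3$-leading coefficients of the $s_i$, while you simply observe that non-vanishing of these leading coefficients is a Zariski-open condition, which amounts to the same thing.
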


\begin{proof}
Observe that $\deg(\cP^\cS) = \deg(\cS)$ and $\mapdeg(\cP^\cS) = \mapdeg(\cS)$.  Since $\cP^\cS$ parametrizes the plane $u_2 = u_3$ in $\Bbb{P}^3$, the image surface $\myS^\cS$ has $\deg(\myS^\cS) = 1$. Hence
\[
\begin{array}{rclr}
\mapdeg(\cS) &= & 1 \cdot \mapdeg(\cP^{\cS}) &\\
&= & \deg(\cP^{\cS})^2-\mult(\myB(\cP^\cS)) & \text{(see Theorem \ref{theorem-formulaP})}\phantom{.}\\
&= & \deg(\cS)^2-\betas & \text{(see Lemma \ref{lemma:betaPbetaS})}.
\end{array}
\]
This proves statement (1).

For (2), assume for the moment that we have a non-empty open subset $\Omega_4$ of $\myGS$ such that $\deg_{\ot}(J_1^L) = \deg_{\ot}(J_2^L) = \deg(\cS)$ for all $L \in \Omega_4$.  Set $\Omega'_\cS = \Omega_4 \cap \Omega_\cS$, where $\Omega_\cS$ is from Theorem~\ref{theorem-BetaP-J1J2}.

Now take $L \in \Omega'_\cS$. Then Theorem \ref{theorem-BetaP-J1J2} allows us to rewrite statement (1) in the form
\[
\mapdeg(\cS) = \deg(\cS)^2 - \deg_{\ot}(\content_{\{\ox,\oy\}}(\Res_{t_3}(J_{1}^{L},J_{2}^{L}))).
\]
However, we have the factorization
\[
\Res_{t_3}(J_{1}^{L},J_{2}^{L}) = \content_{\{\ox,\oy\}}(\Res_{t_3}(J_{1}^{L},J_{2}^{L})) \cdot \Primpart_{\{ \ox\}}(\Res_{t_3}(J_{1}^{L},J_{2}^{L})).
\]
This resultant has degree $\deg(\cS)^2$ w.r.t.\ $\ot$ since $L \in \Omega_4$, and statement (2) follows.

It remains to construct $\Omega_4$.  Let $\cL=(\cL_1:\cL_2:\cL_3)$ be a generic projective transformation; that is, $\cL_i=z_{i,1} t_1+ z_{i,2} t_2+ z_{i,3} t_3$, where $z_{i,j}$ are undetermined coefficients satisfying that the determinant of the corresponding matrix is not zero.  Since $s_i(0,0,1) \ne 0$ for all $i$, arguing as in the proof of Lemma \ref{lemm-open-1}, we obtain
\begin{align*}
\LC_{t_3}(J_1^\cL) &= x_3 \sum_{j=1}^3 z_{1,j} \LC_{t_3}(s_i) - x_1 \sum_{j=1}^3 z_{3,j} \LC_{t_3}(s_i)\\
\LC_{t_3}(J_2^\cL) &= x_3 \sum_{j=1}^3 z_{2,j} \LC_{t_3}(s_i) - x_2 \sum_{j=1}^3 z_{3,j} \LC_{t_3}(s_i)
\end{align*}
(compare with \eqref{eq-LCoef}).  If we set $A_1(\oz) := \sum_{j=1}^3 z_{3,j} \LC_{t_3}(s_i)$, then the desired $\Omega_4$ consists of all $L \in \myGS$ such that $A_1(\oz^L) \ne 0$ ($\oz^L$ is defined in the proof of Lemma \ref{lemm-open-1}).
\end{proof}

Applying Theorems \ref{theorem-betaV1LV2L}, \ref{theorem-BetaP-J1J2} and \ref{theorem-formulapbS},  we get the following resultant-based formula which is the corresponding version of Theorem \ref{theorem-formulaS}.

\para

\begin{theorem}\label{theorem-formulaSbis}  \
\begin{enumerate}
\item If $L\in \myGS$, then
$$\begin{array}{lll} 1&= &\dfrac{\deg(\cS)^2-\deg_{\ot}(\content_{\{\ox,\oy\}}(\Res_{t_3}(V_{1}^{L},V_{2}^{L})))}
{\mapdeg(\cS)}.
\\
\noalign{\medskip}
&= &\dfrac{\deg_{\ot}(\Primpart_{\{ \ox\}}(\Res_{t_3}(V_{1}^{L},V_{2}^{L})))}
{\mapdeg(\cS)}.
\end{array}
$$
\item For every $L$ in the open set $\Omega'_\cS$ defined in Theorem \ref{theorem-formulapbS}, we have
 $$\begin{array}{lll}
1&=&\dfrac{\deg(\cS)^2-\deg_{\ot}(\content_{\{\ox\}}(\Res_{t_3}(J_{1}^{L},J_{2}^{L})))}
{\mapdeg(\cS)} \\
\noalign{\medskip}
&= &\dfrac{\deg_{\ot}(\Primpart_{\{ \ox\}}(\Res_{t_3}(J_{1}^{L},J_{2}^{L})))}
{\mapdeg(\cS)}.
\end{array}$$
\end{enumerate}
\end{theorem}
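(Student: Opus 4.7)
The plan is to assemble Theorem \ref{theorem-formulaSbis} from three already-proved ingredients: Theorem \ref{theorem-betaV1LV2L}, Theorem \ref{theorem-BetaP-J1J2}, and Theorem \ref{theorem-formulapbS}(1). Rearranging the formula $\betas = \deg(\cS)^2 - \mapdeg(\cS)$ from Theorem \ref{theorem-formulapbS}(1) yields $\mapdeg(\cS) = \deg(\cS)^2 - \betas$, and substituting Theorem \ref{theorem-betaV1LV2L} for $\betas$ immediately gives the first equality in part (1) upon dividing by $\mapdeg(\cS)$.

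For the second equality in part (1), I would first verify that the total $\ot$-degree of $\Res_{t_3}(V_{1}^{L},V_{2}^{L})$ equals $\deg(\cS)^2$. Since both $V_{1}^{L}$ and $V_{2}^{L}$ are $\ot$-homogeneous of degree $d := \deg(\cS)$, the classical homogeneity property of resultants gives the $d^2$ total degree in $\{t_1,t_2\}$, provided the $t_3$-leading coefficients do not drop. The $t_3^d$-coefficient of $V_{1}^{L}$ is $\sum_{i=1}^3 x_i L_i(\cS)(0,0,1)$; because $(s_1(0,0,1),s_2(0,0,1),s_3(0,0,1))\neq \mathbf{0}$ by the standing hypothesis $(0:0:1)\notin\myC(s_i)$, and $L$ is a nonsingular linear map, at least one $L_i(\cS)(0,0,1)$ is nonzero, making this coefficient nonzero in $\mathbb{K}(\ox)$; the same applies to $V_{2}^{L}$ in $\oy$. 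Thus $\deg_{\ot}\Res_{t_3}(V_{1}^{L},V_{2}^{L}) = d^2$, and the factorization $\Res = \content \cdot \Primpart$ combined with the first equality yields $\deg_{\ot}(\Primpart_{\{\ox\}}(\Res_{t_3}(V_{1}^{L},V_{2}^{L}))) = d^2 - \betas = \mapdeg(\cS)$.

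Part (2) is completely analogous. Theorem \ref{theorem-BetaP-J1J2} replaces Theorem \ref{theorem-betaV1LV2L} as the source of the content identity, giving the first equality after division by $\mapdeg(\cS)$. For the primitive-part statement, one needs $\deg_{\ot}\Res_{t_3}(J_{1}^{L},J_{2}^{L}) = \deg(\cS)^2$; this is exactly the leading-coefficient condition that was already imposed via the subset $\Omega_4$ in the construction of $\Omega'_\cS$ in the proof of Theorem \ref{theorem-formulapbS}(2), which ensures $\deg_{t_3}(J_i^L) = \deg(\cS)$ for $i=1,2$ and hence the expected total $\ot$-degree of the resultant. The rest of the computation mirrors part (1).

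I do not anticipate a genuine obstacle: the only delicate point is the resultant-degree bookkeeping verifying that the product of the content and primitive-part degrees reconstructs $\deg(\cS)^2$, and the open set $\Omega'_\cS$ was precisely engineered in the previous section to make this bookkeeping valid. Everything else reduces to arithmetic on formulas already in hand.
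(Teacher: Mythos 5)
Your proposal is correct and matches the approach the paper intends (the paper gives no explicit proof, merely stating that the result follows by ``applying'' Theorems \ref{theorem-betaV1LV2L}, \ref{theorem-BetaP-J1J2} and \ref{theorem-formulapbS}). You correctly assemble the first equality in each part from $\mapdeg(\cS) = \deg(\cS)^2 - \betas$ together with the content-of-resultant identity, and you supply the needed degree bookkeeping: for part~(1) the hypothesis $(0:0:1)\notin\myC(s_i)$ together with the nonsingularity of $L$ guarantees the $t_3$-leading coefficients of $V_i^L$ are nonzero linear forms over $\K(\ox)$ (resp.\ $\K(\oy)$), so $\deg_{\ot}\Res_{t_3}(V_1^L,V_2^L)=\deg(\cS)^2$ holds for \emph{every} $L\in\myGS$, while for part~(2) the open set $\Omega_4\subset\Omega'_\cS$ was constructed precisely to preserve $\deg_{t_3}(J_i^L)=\deg(\cS)$ --- and indeed the primitive-part identity in part~(2) is already stated verbatim as Theorem~\ref{theorem-formulapbS}(2), so your re-derivation is slightly redundant but harmless. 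One minor point worth flagging, which you inherited from the paper's wording rather than introduced yourself: the paper writes $\content_{\{\ox,\oy\}}$ in the first fraction of part~(1) but $\Primpart_{\{\ox\}}$ in the second; for these two quantities to sum to $\deg(\cS)^2$ via the factorization $\Res=\content\cdot\Primpart$, the content and primitive part must be taken with respect to the \emph{same} variable set $\{\ox,\oy\}$, and this is almost certainly a typographical slip carried over from Theorem~\ref{theorem-formulaS}; your computation is correct under the natural reading $\Primpart_{\{\ox,\oy\}}$.
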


\para

Since a birational map of $\proj2$ has $\mapdeg(\cS)=1$, we get the following corollaries.

\para
\begin{corollary}\label{C-formulaSbis} If $\cS$ is birational, then
$\betas=\deg(\cS)^{2}-1.$
\end{corollary}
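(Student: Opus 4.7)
The corollary is an immediate specialization of Theorem \ref{theorem-formulapbS}(1). The plan is very short: invoke that theorem and observe that the hypothesis pins down $\mapdeg(\cS)$.

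Recall that $\mapdeg(\cS)$ was defined in the Notation section as the cardinality of the generic fiber of $\cS$. By definition, a birational map $\cS \colon \proj2 \dashrightarrow \proj2$ admits a rational inverse, so its generic fiber consists of exactly one point. Hence $\mapdeg(\cS)=1$ whenever $\cS$ is birational.

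Substituting $\mapdeg(\cS)=1$ into the formula of Theorem \ref{theorem-formulapbS}(1), namely
\[
\betas = \deg(\cS)^2 - \mapdeg(\cS),
\]
we immediately obtain $\betas = \deg(\cS)^2 - 1$, which is the desired equality. Since everything is invoked from a theorem proved a few lines above, there is no genuine obstacle: the only thing to keep in mind is that the hypothesis $(0:0:1)\notin \myC(s_i)$ used to derive Theorem \ref{theorem-formulapbS} can be removed exactly as in Remark \ref{rem-hipotesis} (and Remark \ref{rem-hipotesisS}), because applying a projective change of coordinates to $\cS$ preserves both $\deg(\cS)$ and $\mapdeg(\cS)$, and also preserves the multiplicity $\betas$ of the base locus.
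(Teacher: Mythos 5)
Your proof is correct and follows exactly the paper's approach: the paper obtains this corollary simply by noting that a birational map of $\proj2$ has $\mapdeg(\cS)=1$ and substituting into Theorem~\ref{theorem-formulapbS}(1). The extra remark about removing the hypothesis $(0:0:1)\notin\myC(s_i)$ via Remark~\ref{rem-hipotesisS} is a harmless addition.
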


\para

\begin{corollary}
Every non-linear birational transformation of $\proj2$ has base points.
\end{corollary}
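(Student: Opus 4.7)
The plan is to derive this as an immediate consequence of Corollary \ref{C-formulaSbis}. Since $\cS$ is birational, that corollary gives $\betas = \deg(\cS)^2 - 1$, and the non-linearity hypothesis means $\deg(\cS) \geq 2$, so $\betas \geq 3 > 0$.

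Next, I would invoke the definition of $\betas$ as $\sum_{A \in \myB(\cS)} \mult_A(\myC(V_1),\myC(V_2))$: a sum of non-negative integers (which is empty, and hence zero, when $\myB(\cS) = \emptyset$) cannot be positive unless the index set is non-empty. Therefore $\myB(\cS) \neq \emptyset$, i.e., $\cS$ has at least one base point.

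The only mild subtlety is the standing hypothesis $(0:0:1) \notin \myC(s_i)$ used throughout Section \ref{sec-formula-birational}; as noted in Remark \ref{rem-hipotesisS} (referenced earlier in the section), one can always pre-compose with a projective transformation $\ell$ to enforce this without changing either $\deg(\cS)$ or the existence/number of base points (base points are permuted by $\ell$). So there is no real obstacle here, and the whole argument reduces to a one-line inequality combined with the definition of $\betas$.
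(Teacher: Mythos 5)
Your argument is exactly the one the paper intends: the corollary is stated immediately after Corollary \ref{C-formulaSbis} with no separate proof because it is precisely the observation that $\deg(\cS)\geq 2$ forces $\betas=\deg(\cS)^2-1>0$, hence $\myB(\cS)\neq\emptyset$. Your handling of the standing hypothesis via Remark \ref{rem-hipotesisS} is also consistent with the paper's conventions.
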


\para

\begin{remark}\label{rem-hipotesisS}{\rm
At the beginning of this section, we required that $(0:0:1)\not\in \myC(s_i)$ for $i=1,2,3$. Reasoning as in Remark \ref{rem-hipotesis}, we get that since the formula holds for $\cS^*(\ot)=\cS(\ell(\ot))$ ($\ell(\ot)$ is a projective transformation), it also holds for $\cS$.}
\end{remark}

\para

In the last part of this section, we discuss an additional property satisfied by birational transformations of $\proj2$. This property is related with the rationality of the curves $\myC(J_{1}^{L})$  and $\myC(J_{2}^{L})$.
\para

\begin{lemma}\label{lemm-open-2}
There exists a non-empty Zariski open subset $\Omega''_\cS$ of $\myGS$ such that for every $L\in \Omega$, $\myC(J_{i}^{L})$ is irreducible.
\end{lemma}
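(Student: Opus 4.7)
The plan is to reduce irreducibility of $\myC(J_{i}^{L})$ to the classical fact that a birational map pulls back a pencil of lines to a pencil whose generic member is geometrically irreducible. I would treat $i=1$ in detail; the case $i=2$ is identical with $L_{1}$ and $x_{1}$ replaced by $L_{2}$ and $x_{2}$ (the role of $L_{3}$ being preserved). Since
\[
J_{1}^{L}(\ox,\ot)=x_{3}\,L_{1}(\cS)(\ot)-x_{1}\,L_{3}(\cS)(\ot)
\]
involves only $x_{1},x_{3}$ and is homogeneous of degree one in that pair, setting $\lambda:=x_{1}/x_{3}$ lets us write $J_{1}^{L}=x_{3}\,f_{L}(\ot,\lambda)$ with
\[
f_{L}(\ot,\lambda):=L_{1}(\cS)(\ot)-\lambda\,L_{3}(\cS)(\ot).
\]
Because $\overline{\K(\lambda)}$ is algebraically closed and irreducibility over an algebraically closed field is preserved under any field extension, irreducibility of $\myC(J_{1}^{L})$ over $\overline{\K(\ox)}$ is equivalent to absolute irreducibility of $f_{L}$ in $\K(\lambda)[\ot]$.

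First I would take $\Omega''_{\cS}$ to be the Zariski-open subset of $L\in \myGS$ for which $\gcd(L_{1}(\cS),L_{3}(\cS))=\gcd(L_{2}(\cS),L_{3}(\cS))=1$. Non-emptiness follows from $\gcd(s_{1},s_{2},s_{3})=1$: any polynomial dividing both $L_{1}(\cS)$ and $L_{3}(\cS)$ for a generic choice of $L$ would be forced to divide $s_{1},s_{2},s_{3}$. Geometrically, this says the linear pencil $\{L_{1}(\cS)-c\,L_{3}(\cS)\}_{c\in\K}$ on $\proj2$ has no fixed component; combined with the fact that $f_{L}$ has degree one in $\lambda$, it also rules out the ``proper power of an irreducible'' pathology, since any such power would force $\gcd(L_{1}(\cS),L_{3}(\cS))\neq 1$.

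The heart of the argument exploits the assumed birationality of $\cS$. The map $\cS$ induces an isomorphism of function fields $\cS^{\ast}$ under which $L_{1}/L_{3}$ on the target $\proj2$ pulls back to $L_{1}(\cS)/L_{3}(\cS)$ on the source $\proj2$. Because $L$ is a projective transformation, $\{L_{1}/L_{3},L_{2}/L_{3}\}$ is a transcendence basis of the target function field over $\K$, so that field is purely transcendental over $\K(L_{1}/L_{3})$; in particular $\K(L_{1}/L_{3})$ is algebraically closed in it. Transported through $\cS^{\ast}$, the subfield $\K(L_{1}(\cS)/L_{3}(\cS))$ is algebraically closed in the function field of the source $\proj2$. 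By the standard criterion (in characteristic zero, the generic fiber of a dominant rational map $X\dashrightarrow\mathbb{P}^{1}$ is geometrically integral if and only if the corresponding subfield of $\K(X)$ is algebraically closed in $\K(X)$), the generic fiber of $\ot\mapsto(L_{1}(\cS)(\ot):L_{3}(\cS)(\ot))$ is geometrically integral, which is exactly the absolute irreducibility of $f_{L}$. Hence $\myC(J_{1}^{L})$ is irreducible.

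The main obstacle, in my view, is the final step translating ``geometrically integral generic fiber'' into ``absolute irreducibility of the defining polynomial $f_{L}$''. This combines the algebraic-closure criterion for geometric irreducibility with generic reducedness of fibers in characteristic zero; both are classical, and together with the $\gcd$ hypothesis cutting out $\Omega''_{\cS}$ they take care of the remaining bookkeeping (most importantly, that $f_{L}$ is squarefree so that the generic fiber is not just irreducible but reduced).
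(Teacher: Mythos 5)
Your argument proves the lemma only under the additional hypothesis that $\cS$ is birational, but the lemma as stated in the paper applies to any dominant rational map $\cS$ satisfying the section's running hypotheses; birationality is only imposed afterwards, in Proposition \ref{prop-K1K2}, not in Lemma \ref{lemm-open-2} itself. Your key step transports $\K(L_1/L_3)$ through the function-field \emph{isomorphism} $\cS^{\ast}$; when $\cS$ is merely dominant, $\cS^{\ast}$ is only a finite inclusion of function fields, and the fact that $\K(L_1/L_3)$ is algebraically closed in $\K(\proj2)$ does not transport to algebraic closedness of $\K(L_1(\cS)/L_3(\cS))$ in $\K(t_1,t_2)$. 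So as written the proof does not cover the full scope of the lemma.

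Beyond this scope issue, the route is genuinely different from the paper's. The paper argues entirely through resultant specializations: it lets $\cL$ be a generic projective transformation with undetermined coefficients $\oz$, forms $R^\cL=\Res_{t_3}(\cL_1(\cS),\cL_3(\cS))$, proves it is nonzero using $\gcd(s_1,s_2,s_3)=1$, and defines $\Omega''_\cS$ by the non-vanishing of the leading coefficients $A_i$ together with the gcd $M_i$ of the coefficients of $R^\cL$ with respect to $\{t_1,t_2\}$. It then observes that if $J_1^L$ were reducible then $\gcd(L_1(\cS),L_3(\cS))\ne 1$, and shows this forces $M_1(\oz^L)=0$, a contradiction. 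What that resultant argument directly gives is coprimality of $L_1(\cS),L_3(\cS)$, equivalently irreducibility of $J_1^L$ as an element of $\K(\ox)[\ot]$ (by bidegree $(1,\deg\cS)$, any factorization over $\K[\ox,\ot]$ would have a factor of $\ox$-degree $0$, which would divide both $L_1(\cS)$ and $L_3(\cS)$). Your function-field approach, by contrast, aims directly at \emph{geometric} irreducibility of $\myC(J_i^L)$, which is what the statement literally asserts; but it buys this precision at the cost of assuming $\cS$ birational. If you want your argument to replace the paper's, you either need to add the birationality hypothesis explicitly (and note that this matches the subsequent use in Proposition \ref{prop-K1K2}) or supply the Bertini-type ingredient that the paper relies on implicitly — namely that for a dominant $\cS$ and generic $L$, the pencil $\{L_1(\cS)-c\,L_3(\cS)\}_c$ has no fixed part and is not composite with a pencil of lower degree, so its generic member is geometrically irreducible.
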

\begin{proof}
Let $\cL=(\cL_1:\cL_2:\cL_3)$ be a generic projective transformation as in the proof of Theorem \ref{theorem-formulapbS} and set $\overline{z}_i=(z_{i,1},z_{i,2},z_{i,3})$. Let $A_i(\overline{z}_i)$ be the leading coefficient of $\cL_i(S)$ w.r.t.\ $t_3$. Now set $R^\cL(\overline{z}_1,\overline{z}_3,t_1,t_2):=\Res_{t_3}(\cL_1(S),\cL_3(S))\in \mathbb{K}[\overline{z}_1,\overline{z}_3,t_1,t_2]$. If $R^\cL=0$, then $\cL_1(S),\cL_3(S)$ have a common factor $B$. Arguing as in the proof of Lemma \ref{lemm-open-1},  $B\in \mathbb{K}[\ot]$. So, in particular, $B$ divides $\cL_1(S)$, and hence $B$ divides $\gcd(s_1,s_2,s_3)$, a contradiction. Therefore $R^\cL$ is non-zero.  Then let  $M_1(\overline{z}_1,\overline{z}_3)$ be the gcd of all coefficients of $R^\cL$ w.r.t.\ $\{t_1,t_2\}$. Repeating the same argument for $\cL_2(S)$ and $\cL_3(S)$, we get a polynomial $M_2(\overline{z}_{2},\overline{z}_{3})$.

In this situation, let $\Omega''_\cS$ consists of all projective transformations whose coefficients are not zeros of $A_1\cdot A_2 \cdot A_3 \cdot M_1\cdot M_2$.  If $L \in \Omega''_\S$, then
 $J_{1}^{L}$ and $J_{2}^{L}$ are irreducible. Indeed, if $J_{1}^{L}$ is reducible, then $\gcd(L_1(S),L_3(S))\neq 1$. Moreover, since
 $A_1$ and $A_3$ do not vanish, $\Res_{t_3}(L_1(S),L_3(S))$ specializes properly. Thus, $R^\cL(\overline{z}_1,\overline{z}_3,t_1,t_2)$ vanishes, and hence $M_1$ also vanishes, a contradiction.  Similar reasoning shows that
 $J_{2}^{L}$ is also irreducible.
\end{proof}

\para

\begin{example}\label{example-section3} {\rm
Consider the classical Cremona transform $\cS(\ot) = (t_2t_3,t_1t_3,t_1t_2)$.  It has base points $\{(1:0:0), (0:1:0), (0:0:1)\}$ and $\deg(\cS) = 2$.  Since $\cS$ is birational, Theorem \ref{theorem-formulapbS} implies
\[
\betas = 2^2 - 1 = 3.
\]
Hence base point has multiplicity 1.  Also notice that the polynomials
\begin{align*}
J_1(\ox,\ot) &= x_3(t_2t_3) - x_1(t_1t_2) = t_2(x_3 t_3 - x_1 t_1)\\
J_2(\ox,\ot) &= x_3(t_1t_3) - x_2(t_1t_2) = t_1(x_3 t_3 - x_2 t_2)
\end{align*}
are not irreducible.  This explains why the open set $\Omega''_\cS$ is needed in Lemma \ref{lemm-open-2}.}
\end{example}

\para

\begin{proposition}\label{prop-K1K2} Let $\cS$  be a  birational map  of $\proj2$ and  $\Omega''_\cS$ be the open subset from Lemma \ref{lemm-open-2}.  Assume $L \in \Omega''_\cS$ and let  $\cR^{L}=\cR\circ L^{-1}=(r_{1}^{L}:r_{2}^L:r_{3}^{L})$  be the inverse of $L\circ \cS$.   Then we have:
\begin{enumerate}
\item $\myC(J_{1}^{L})$ is rational  and can be parametrized by
\[ \cJ_1(h_1,h_2)=(j_{1,1}(x_1,x_3,h_1,h_2):j_{1,2}(x_1,x_3,h_1,h_2):j_{1,3}(x_1,x_3,h_1,h_2)),  \]
where  $j_{1,i}(x_1,x_3,h_1,h_2)$ is the homogenization of $r_{i}^{L}(x_1,h_1,x_3)$ as polynomial in $\mathbb{K}[\ox][h_1]$.
\item $\myC(J_2^{L})$ is rational   and can be parametrized by
\[ \cJ_2(h_1,h_2)=(j_{2,1}(x_2,x_3,h_1,h_2):j_{2,2}(x_2,x_3,h_1,h_2):j_{2,3}(x_2,x_3,h_1,h_2)),  \]
where $j_{2,i}(x_2,x_3,h_1,h_2)$ is the homogenization of $r_{i}^{L}(h_1,x_2,x_3)$ as polynomial in $\mathbb{K}[\ox][h_1]$.
\end{enumerate}
\end{proposition}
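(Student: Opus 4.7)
The plan is to exhibit $\myC(J_1^L)$ as the pullback of a line under the birational map $L\circ\cS$, and then produce a rational parametrization by pushing forward a parametrization of that line through the inverse $\cR^L$. First, the fact that $\cR^L = \cR\circ L^{-1}$ inverts $L\circ \cS$ is a formal calculation: $(L\circ \cS)\circ(\cR\circ L^{-1}) = L\circ(\cS\circ \cR)\circ L^{-1} = \mathrm{id}$. The geometric key is the identity
\[
J_1^L(\ox,\ot) = x_3\, L_1(\cS(\ot)) - x_1\, L_3(\cS(\ot)),
\]
which shows that $\ot\in \myC(J_1^L)$ if and only if $(L\circ \cS)(\ot)$ lies on the line $\Lambda_1\subset \proj2$ (with target coordinates $u_1,u_2,u_3$ and defined over $\overline{\K(\ox)}$) given by $x_3 u_1 - x_1 u_3 = 0$. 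Thus $\myC(J_1^L)$ is the closure of $(L\circ \cS)^{-1}(\Lambda_1)$.

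Because every birational map of $\proj2$ has finite base locus, applying $\cR^L$ to a parametrization of $\Lambda_1$ yields a dense rational parametrization of this preimage. Parametrizing $\Lambda_1$ as $(u_1:u_2:u_3) = (x_1 h_2 : h_1 : x_3 h_2)$ with $(h_1:h_2)\in \mathbb{P}^1$, I set
\[
\cJ_1(h_1,h_2) := \bigl(r_1^L(x_1 h_2, h_1, x_3 h_2) : r_2^L(x_1 h_2, h_1, x_3 h_2) : r_3^L(x_1 h_2, h_1, x_3 h_2)\bigr).
\]
Its image lies in $\myC(J_1^L)$ by construction: applying $L\circ \cS$ to $\cJ_1(h_1,h_2)$ returns the chosen point $(x_1 h_2 : h_1 : x_3 h_2)$ of $\Lambda_1$, which forces $J_1^L$ to vanish. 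To match the form stated in the proposition, I expand $r_i^L = \sum_{a+b+c=d} c_{abc}\, t_1^a t_2^b t_3^c$, so that
\[
r_i^L(x_1 h_2, h_1, x_3 h_2) = \sum_{a+b+c=d} c_{abc}\, x_1^a x_3^c\, h_1^b h_2^{a+c};
\]
after stripping any common power of $h_2$, this is precisely the homogenization in $h_1$ via $h_2$ of $r_i^L(x_1,h_1,x_3)\in \K[\ox][h_1]$, namely $j_{1,i}(x_1,x_3,h_1,h_2)$.

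Since $\cR^L$ has only finitely many base points, the line $\Lambda_1$ is not contained in this base locus, so $\cJ_1$ is non-constant; combined with the irreducibility of $\myC(J_1^L)$ granted by Lemma \ref{lemm-open-2}, this forces $\cJ_1$ to dominate $\myC(J_1^L)$ and hence to provide a birational parametrization, establishing rationality. The argument for $\myC(J_2^L)$ is completely parallel: use $\Lambda_2: x_3 u_2 - x_2 u_3 = 0$ parametrized as $(u_1:u_2:u_3) = (h_1 : x_2 h_2 : x_3 h_2)$ and compose with $\cR^L$. The only real bookkeeping obstacle is checking the homogenization identification and confirming that $\cJ_i$ is genuinely dominant rather than degenerating on the line's preimage, but both follow from the finite-base-locus property of birational self-maps of $\proj2$ together with Lemma \ref{lemm-open-2}.
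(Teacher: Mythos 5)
Your proof is correct and takes essentially the same approach as the paper, whose argument is much terser (it simply notes that $J_i^L$ is irreducible by Lemma \ref{lemm-open-2} and that $J_i^L(\cJ_i(h_1,h_2))=0$ because $\cR^L$ inverts $L\circ\cS$). You fill in the details the paper leaves implicit — the geometric reading of $\myC(J_1^L)$ as the pullback of the line $x_3u_1-x_1u_3=0$, the explicit verification that $J_1^L(\cJ_1)=0$, and the observation that $\cJ_1$ is non-constant because the base locus of $\cR^L$ is finite — so the closure of the image is the whole irreducible curve.
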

\begin{proof}  Since the $J_{i}^{L}$ are irreducible polynomials (see Lemma \ref{lemm-open-2}) and $\cR^L$ is the inverse of $L\circ \cS$,  we have  $J_{i}^{L}(\cJ_i(h_1,h_2))=0$. This proves (1) and (2).
\end{proof}

\para

A natural question is whether the curves $\myC(K_{i}^{L})$ in $\Bbb{P}^3(\Bbb{F})$ (see \eqref{eq-K1K2g}), when $\cP$ is birational, are also rational. However, in general, this is not true. For instance, consider
\[
\cP(\ot)=(t_1^3-t_1 t_2 t_3-t_3^3: t_2 t_3^2-t_1^3-5 t_3^3: t_1^3-t_2^2 t_3-t_1^2 t_3+4 t_3^3: t_1^3-t_2 t_3^2-t_3^3).
\]
One may check that $\mapdeg(\cP)=1$ (use \cite{PS-grado}), $\betap=3$ and $\deg(\myS)=6$ (check that the formula in Theorem \ref{theorem-formulaP} holds). However, there exists a non-empty open Zariski subset $\Omega$ of $\myG$ such that for every $L\in \Omega$, the curves $\myC(K_{1}^{L})$ and $\myC(K_{2}^{L})$ have genus 1.

\section{Behaviour of base points under composition}\label{sec-composition}

In this section, we analyze the relation between the base loci of two different parametrizations  of the same surface under the assumption that one is the reparametrization of the other. More precisely, in the sequel we fix a surface $\myS\subset \projtres$, as well as two rational parametrization of $\myS$, namely $\cP$ and $\cQ$. Moreover, we \textbf{assume} that there exists a rational map $\cS$ of $\proj2$ such that $\cP=\cQ\circ \cS$.  Note that if $\cQ$ is birational then $\cS$ always exists; indeed, in that case, $\cS=\cQ^{-1}\circ \cP$.
In this situation, our goal is to relate $\betap, \betas$, and $\betaq$.

 \para

To begin, let  $\cQ(\ot)=(q_1 :\cdots:q_4)$, $\cS(\ot)=(s_1 :s_2:s_3)$ where $\gcd(q_1,\ldots,q_4) =\gcd(s_1,s_2,s_3)=1$. Also set  $p_i(\ot)=q_i(s_1(\ot),s_2(\ot),s_3(\ot))$.  Here is a first result.

\para

\begin{proposition}\label{prop-ineq}\
\begin{enumerate}
\item $\deg(\cP)\le \deg(\cQ) \,\deg(\cS)$.
\item $\betap\leq\deg(\cS)^2 \betaq+\deg(\myS)\, \mapdeg(\cQ) \,\betas$.
\end{enumerate}
\end{proposition}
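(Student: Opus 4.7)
The plan is to prove (1) by a direct degree calculation and (2) as an algebraic consequence of (1) together with the degree formulas already established in Theorems \ref{theorem-formulaP} and \ref{theorem-formulapbS}.

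For statement (1): Since $\cP = \cQ \circ \cS$ as rational maps, the tuple $(q_1(\vec s):\cdots:q_4(\vec s))$ represents $\cP$, but its components need not be coprime. Set $h(\ot):=\gcd(q_1(\vec s),\ldots,q_4(\vec s))$. Then the coprime homogeneous representatives satisfy $p_i = q_i(\vec s)/h$ (up to a nonzero scalar), and
\[
\deg(\cP) \;=\; \deg(q_i(\vec s)) - \deg(h) \;=\; \deg(\cQ)\deg(\cS) - \deg(h) \;\le\; \deg(\cQ)\deg(\cS).
\]

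For statement (2): I would apply Theorem \ref{theorem-formulaP} to $\cP$ and $\cQ$, and Theorem \ref{theorem-formulapbS}(1) to $\cS$, to obtain
\[
\betap = \deg(\cP)^2 - \deg(\myS)\,\mapdeg(\cP),\quad \betaq = \deg(\cQ)^2 - \deg(\myS)\,\mapdeg(\cQ),\quad \betas = \deg(\cS)^2 - \mapdeg(\cS).
\]
Because the degree of a dominant rational map is multiplicative under composition, $\mapdeg(\cP) = \mapdeg(\cQ)\,\mapdeg(\cS)$ (here $\cS$ must be dominant, since otherwise $\cQ\circ\cS$ could not parametrize the surface $\myS$). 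Substituting these four identities into the right-hand side of (2), the terms $\pm \deg(\cS)^2\deg(\myS)\mapdeg(\cQ)$ cancel and the whole expression telescopes to
\[
\deg(\cS)^2\,\betaq + \deg(\myS)\,\mapdeg(\cQ)\,\betas \;=\; \bigl(\deg(\cQ)\deg(\cS)\bigr)^2 - \deg(\myS)\,\mapdeg(\cP).
\]
Subtracting $\betap = \deg(\cP)^2 - \deg(\myS)\,\mapdeg(\cP)$, the $\deg(\myS)\,\mapdeg(\cP)$ terms cancel and one is left with $\bigl(\deg(\cQ)\deg(\cS)\bigr)^2 - \deg(\cP)^2$, which is nonnegative by part (1). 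This yields the desired inequality.

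The only step that requires care is the multiplicativity $\mapdeg(\cP)=\mapdeg(\cQ)\,\mapdeg(\cS)$, since it is the glue between the three degree formulas; it is however a standard fact for dominant rational maps between irreducible varieties of the same dimension (the generic fiber of $\cP$ is the disjoint union, over the $\mapdeg(\cQ)$ points in a generic fiber of $\cQ$, of the $\mapdeg(\cS)$ points lying over each in a generic fiber of $\cS$). Everything else is a bookkeeping computation, so I expect no real obstacle beyond verifying this composition identity.
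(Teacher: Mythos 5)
Your proof is correct and takes essentially the same route as the paper: part (1) is the same degree count after dividing out $\gcd(q_1(s_1,s_2,s_3),\ldots,q_4(s_1,s_2,s_3))$, and part (2) combines the degree formulas of Theorems~\ref{theorem-formulaP} and~\ref{theorem-formulapbS} with the multiplicativity $\mapdeg(\cP)=\mapdeg(\cQ)\,\mapdeg(\cS)$ to reduce the inequality to part (1). The only difference is organizational: the paper forms the product $\deg(\cQ)^2\deg(\cS)^2 = \bigl(\betaq+\deg(\myS)\,\mapdeg(\cQ)\bigr)\bigl(\betas+\mapdeg(\cS)\bigr)$ and subtracts it from the $\cP$-formula, whereas you substitute directly into the right-hand side and telescope, but both reach the same identity $\deg(\cP)^2 - \deg(\cQ)^2\deg(\cS)^2 = \betap - \bigl(\deg(\cS)^2\,\betaq + \deg(\myS)\,\mapdeg(\cQ)\,\betas\bigr)$.
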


\begin{proof}
For (1), note that $\deg(p_i) = \deg(\cQ) \,\deg(\cS)$.  Then the desired inequality follows since $\cP$ is obtained from the $p_i$ after dividing out by $\gcd(p_1,p_2,p_3,p_4)$.

For (2), Theorems \ref{theorem-formulaP} and \ref{theorem-formulapbS} imply
\begin{align*}
\deg(\cP)^2 &= \betap+\deg(\myS)\,  \mapdeg(\cP)\\
\deg(\cQ)^2 \deg(\cS)^2 &=  \big(\betaq+\deg(\myS)\, \mapdeg(\cQ)\big) \big(\betas+\mapdeg(\cS)\big).
\end{align*}
Since $\mapdeg(\cP)=\mapdeg(\cQ) \,\mapdeg(\cS)$, it follows that
\begin{equation}\label{eq-degbeta}
\begin{array}{c}
\begin{aligned}
\deg(\cP)^2 -\deg(\cQ)^2 \deg(\cS)^2
&= \betap - \big(\betaq\, \betas\,+ \\
&\quad \ \betaq \, \mapdeg(\cS)\, + \\
&\quad \ \deg(\myS) \, \mapdeg(\cQ) \, \betas\big)\\
&=\betap - \big(\deg(\cS)^2 \betaq\, + \\ &\quad \ \deg(\myS)\, \mapdeg(\cQ) \,\betas\big),
\end{aligned}
\end{array}
\end{equation}
where the last equality uses $\deg(\cS)^2 = \betas + \mapdeg(\cS)$ by Theorem \ref{theorem-formulapbS}.  By (1), the left-hand side is non-positive, so the same is true for the right-hand side.
\end{proof}

In the following theorem, we characterize when the inequalities in Proposition \ref{prop-ineq} are equalities.
\para

\begin{theorem}\label{T-propertyP1}  The following statements are equivalent:
\begin{enumerate}
   \item   $\gcd(p_1,p_2,p_3,p_4)=1$.
  \item    $\deg(\cP)=\deg(\cQ) \,\deg(\cS)$.
   \item   $\betap=\deg(\cS)^2 \betaq+\deg(\myS)\, \mapdeg(\cQ) \,\betas$.
   \end{enumerate}
\end{theorem}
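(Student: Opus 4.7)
The plan is to establish the chain $(1)\Longleftrightarrow(2)\Longleftrightarrow(3)$, since both equivalences are short consequences of what has already been set up.

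First I would handle $(1)\Leftrightarrow(2)$ by a direct degree count. Each $p_i(\ot)=q_i(\cS(\ot))$ is homogeneous of the common degree $\deg(\cQ)\,\deg(\cS)$. Writing $g:=\gcd(p_1,\ldots,p_4)$ and letting $(P_1:\cdots:P_4)$ be the reduced representative of $\cP=\cQ\circ\cS$, one has $p_i = g\,P_i$ for all $i$, so
\[
\deg(\cQ)\,\deg(\cS) \;=\; \deg(\cP) + \deg_{\ot}(g).
\]
Therefore $(2)$ holds if and only if $\deg_{\ot}(g)=0$, i.e., $g$ is a non-zero constant, which is exactly $(1)$.

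Next I would prove $(2)\Leftrightarrow(3)$ by invoking identity \eqref{eq-degbeta} derived in the proof of Proposition \ref{prop-ineq} (from Theorems \ref{theorem-formulaP} and \ref{theorem-formulapbS} combined with the multiplicativity $\mapdeg(\cP)=\mapdeg(\cQ)\,\mapdeg(\cS)$):
\[
\deg(\cP)^2 - \deg(\cQ)^2\deg(\cS)^2 \;=\; \betap \,-\, \bigl(\deg(\cS)^2\,\betaq + \deg(\myS)\,\mapdeg(\cQ)\,\betas\bigr).
\]
By Proposition \ref{prop-ineq}(1), $\deg(\cP)\le\deg(\cQ)\,\deg(\cS)$ with both non-negative, so the left-hand side vanishes precisely when $\deg(\cP)=\deg(\cQ)\,\deg(\cS)$. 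Consequently $(2)$ is equivalent to the right-hand side being zero, which is $(3)$, completing the equivalence.

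No step poses a real obstacle, since the substantive work is already contained in Proposition \ref{prop-ineq} and the two degree formulas. The only subtlety I would flag is that $(3)\Rightarrow(2)$ relies on the inequality of Proposition \ref{prop-ineq}(1) to pass from equality of squares to equality of the degrees themselves; this is why I would organise the argument as the two separate equivalences $(1)\Leftrightarrow(2)$ and $(2)\Leftrightarrow(3)$ rather than a cyclic chain, keeping each step transparent.
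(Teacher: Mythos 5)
Your proposal is correct and follows essentially the same route as the paper: the paper's terse argument likewise derives $(1)\Leftrightarrow(2)$ from the degree count in the proof of Proposition \ref{prop-ineq}(1) and derives $(2)\Leftrightarrow(3)$ directly from identity \eqref{eq-degbeta}. One tiny remark: for $(3)\Rightarrow(2)$ you do not actually need Proposition \ref{prop-ineq}(1), since $\deg(\cP)^2=\deg(\cQ)^2\deg(\cS)^2$ already forces $\deg(\cP)=\deg(\cQ)\,\deg(\cS)$ because degrees are non-negative.
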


\begin{proof}
$1 \Leftrightarrow 2$. This follows from the proof of statement (1) of Proposition \ref{prop-ineq}.

\medskip

\noindent $2 \Leftrightarrow 3$. This is an immediate consequence of \eqref{eq-degbeta}.
\end{proof}

\para

The following corollary follows directly from the previous result.

\para

\begin{corollary}\label{cor-betapqs}
If $\gcd(p_1,\ldots,p_4)=1$, then we have:
\begin{enumerate}
\item $\myB(\cQ)=\emptyset$ if and only if $\betap=\deg(\myS)\, \mapdeg(\cQ) \, \betas$.
\item If $\myB(\cQ)=\emptyset$ and $\cQ$ is birational, then $\betap=\deg(\myS)\, \betas$.
\item $\myB(\cP)=\emptyset$ if and only if $\myB(\cQ)=\emptyset=\myB(\cS).$
\end{enumerate}
\end{corollary}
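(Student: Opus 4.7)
The plan is to derive all three statements directly from the key identity
\[
\betap=\deg(\cS)^2\, \betaq + \deg(\myS)\, \mapdeg(\cQ)\, \betas,
\]
which the hypothesis $\gcd(p_1,\ldots,p_4)=1$ affords us via Theorem \ref{T-propertyP1}. The essential observation to use throughout is that $\betap, \betaq, \betas$ are sums of intersection multiplicities, hence non-negative integers that vanish exactly when the corresponding base locus is empty. Additionally, $\deg(\cS), \deg(\myS), \mapdeg(\cQ)$ are all strictly positive integers.

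For statement (1), I would substitute $\betaq=0$ (equivalent to $\myB(\cQ)=\emptyset$) into the identity to obtain $\betap=\deg(\myS)\, \mapdeg(\cQ)\, \betas$ immediately. Conversely, if this equality holds, then the identity forces $\deg(\cS)^2\, \betaq=0$, and since $\deg(\cS)\geq 1$, we conclude $\betaq=0$, i.e., $\myB(\cQ)=\emptyset$. Statement (2) then follows by specializing (1) to the birational case, where $\mapdeg(\cQ)=1$.

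For statement (3), both summands $\deg(\cS)^2\, \betaq$ and $\deg(\myS)\, \mapdeg(\cQ)\, \betas$ in the identity are non-negative, so $\betap=0$ if and only if both summands vanish. Since $\deg(\cS)^2, \deg(\myS)\cdot \mapdeg(\cQ) > 0$, vanishing of each summand reduces to $\betaq=0$ and $\betas=0$, respectively. Translating back, $\myB(\cP)=\emptyset$ is equivalent to $\myB(\cQ)=\emptyset=\myB(\cS)$, as claimed. There is no substantive obstacle here: every step is an immediate algebraic consequence of Theorem \ref{T-propertyP1} together with positivity of the degrees involved.
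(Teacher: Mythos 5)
Your proposal is correct and takes exactly the approach the paper intends: the paper simply says the corollary ``follows directly from the previous result,'' i.e.\ from the equality $\betap=\deg(\cS)^2\,\betaq+\deg(\myS)\,\mapdeg(\cQ)\,\betas$ in Theorem~\ref{T-propertyP1}, and your argument spells out the routine non-negativity and positivity observations the paper leaves implicit.
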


\para

\begin{theorem}\label{T-propertyP0}  If  $\myB(\cQ)=\emptyset$,  then $\gcd(p_1,p_2,p_3,p_4)=1$.
 \end{theorem}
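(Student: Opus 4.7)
The plan is to argue by contradiction. Suppose $\gcd(p_1,p_2,p_3,p_4) = h$ has positive degree. Then the projective plane curve $\myC(h)$ is a common component of each $\myC(p_i)$, so $p_i(A) = 0$ for all $i$ and all $A \in \myC(h)$. The key point is that $\myC(h)$ has infinitely many points while $\myB(\cS)$ is finite (this finiteness follows from $\gcd(s_1,s_2,s_3)=1$, exactly as noted after Definition \ref{def-base-point-P} for $\cP$). Hence we may pick $A \in \myC(h) \setminus \myB(\cS)$.

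Since $A \notin \myB(\cS)$, the point $\cS(A) = (s_1(A):s_2(A):s_3(A)) \in \proj2$ is well defined. Then for each $i$,
\[
q_i(\cS(A)) \;=\; q_i(s_1(A),s_2(A),s_3(A)) \;=\; p_i(A) \;=\; 0,
\]
because $h(A)=0$ and $h \mid p_i$. Therefore $\cS(A) \in \myB(\cQ)$, contradicting the hypothesis $\myB(\cQ) = \emptyset$. We conclude that $\gcd(p_1,p_2,p_3,p_4)$ must be a nonzero constant, i.e.\ equal to $1$ up to scaling.

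There is essentially no obstacle here: the whole proof rests on the elementary observation that the base locus $\myB(\cS)$ is zero-dimensional while a nontrivial common factor of the $p_i$ would cut out a one-dimensional locus, so the composition $\cQ\circ \cS$ would be forced to produce a base point of $\cQ$ at $\cS(A)$ for generic $A$ on that curve. The only mild subtlety is making sure that the evaluation $q_i(\cS(A))$ makes sense, which is guaranteed by excluding the finitely many points of $\myB(\cS)$ from the infinite set $\myC(h)$.
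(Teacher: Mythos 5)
Your proof is correct and is essentially identical to the paper's own argument: both pick a point $A$ on $\myC(h)$ outside the finite set $\myB(\cS) = \myC(s_1)\cap\myC(s_2)\cap\myC(s_3)$ and observe that $\cS(A)$ would then be a base point of $\cQ$. No issues.
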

\begin{proof}
 Assume that a non-constant polynomial $h(\ot)\in {\Bbb K}[\ot]$ divides $p_i$ for all $i$.  Then $h$ divides $q_i(s_1,s_2,s_3)$ for all $i$, so that for each $\mathbf{a}\in \myC(h)$, $q_i(s_1(\mathbf{a}),s_2(\mathbf{a}),s_3(\mathbf{a}))=0$ for all $i$.
But $\myC(s_1)\cap \myC(s_2)\cap \myC(s_3)$ is finite since $\gcd(s_1,s_2,s_3)=1$.  It follows that $\myC(h)\setminus (\myC(s_1)\cap \myC(s_2)\cap \myC(s_3))\neq \emptyset. $ Let $\mathbf{a}\in \myC(h)\setminus (\myC(s_1)\cap \myC(s_2)\cap \myC(s_3))$. Then $(s_1(\mathbf{a}),s_2(\mathbf{a}),s_3(\mathbf{a}))\in \proj2$ and hence is a base point of $\cQ$, a contradiction.
\end{proof}

\para

Theorems \ref{T-propertyP1} and \ref{T-propertyP0} have the following nice corollary.

\para

\begin{corollary}\label{cor-noBPQ} If $\myB(\cQ)=\emptyset$, then  $\deg(\cP) =\deg(\cQ)\,\deg(\cS)$.
\end{corollary}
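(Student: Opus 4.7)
The plan is essentially to chain together the two theorems just stated, since they are tailor-made to yield this corollary. Specifically, Theorem \ref{T-propertyP0} says that the hypothesis $\myB(\cQ)=\emptyset$ is already strong enough to force $\gcd(p_1,p_2,p_3,p_4)=1$, and Theorem \ref{T-propertyP1} then gives the equivalence of this gcd condition with the desired degree equality.

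Concretely, I would proceed as follows. First, invoke Theorem \ref{T-propertyP0} with the given hypothesis $\myB(\cQ)=\emptyset$ to conclude that $\gcd(p_1,p_2,p_3,p_4)=1$, where recall $p_i = q_i(s_1,s_2,s_3)$. Second, apply the equivalence $(1)\Leftrightarrow (2)$ of Theorem \ref{T-propertyP1}, which turns $\gcd(p_1,p_2,p_3,p_4)=1$ into $\deg(\cP) = \deg(\cQ)\,\deg(\cS)$. That is the claim.

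There is no obstacle here, because all the real work has been absorbed into Theorems \ref{T-propertyP0} and \ref{T-propertyP1}. The content of Theorem \ref{T-propertyP0} is the geometric observation that if no irreducible factor of a common divisor $h$ of the $p_i$ existed, then a point on $\myC(h)$ away from the finite set $\myC(s_1)\cap\myC(s_2)\cap\myC(s_3)$ would get mapped by $\cS$ to a base point of $\cQ$; the content of Theorem \ref{T-propertyP1} is the bookkeeping identity that $\deg(\cP)$ drops below $\deg(\cQ)\deg(\cS)$ exactly by the degree of $\gcd(p_1,\ldots,p_4)$. Once both are in hand, the corollary is a one-line consequence. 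Thus the proof reduces to writing: \emph{By Theorem \ref{T-propertyP0}, $\gcd(p_1,p_2,p_3,p_4)=1$, and then by Theorem \ref{T-propertyP1} we obtain $\deg(\cP) = \deg(\cQ)\,\deg(\cS)$.}
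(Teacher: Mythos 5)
Your proposal is correct and matches the paper's argument exactly: the paper introduces Corollary \ref{cor-noBPQ} precisely as the immediate consequence of combining Theorem \ref{T-propertyP0} (which gives $\gcd(p_1,\ldots,p_4)=1$ from $\myB(\cQ)=\emptyset$) with the equivalence $(1)\Leftrightarrow(2)$ of Theorem \ref{T-propertyP1}. Nothing further is needed.
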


\para

\begin{remark}\label{R-propertyP1} \
{\rm
The converse of Theorem \ref{T-propertyP0} is not true.}
\end{remark}

\para

\begin{example}\label{example-section4} {\rm
Consider the parametrization from Examples \ref{example-fromCox} and \ref{example-verifythm}, which we write as
\[
\cQ(\ot) =(t_2^2t_3+t_1^3:t_1^2t_3+t_2^3:t_1t_2t_3:t_2^2t_3).
\]
We know that $\deg(\cQ) = 3, \mapdeg(\cQ) = 1, \deg(\myS) = 5$ and $\betaq = 4$.  If $\cS(\ot) = (t_2t_2,t_1t_3,t_1t_2)$ is the Cremona transform from Example \ref{example-section3}, then the reparametrization $\cP = \cQ \circ \cS$ is given by
\begin{align*}
\cP(\ot) &= (t_1^3t_2(t_3^2+t_2^2):t_1t_3^2(t_2^3+t_1^2t_3):t_1^2t_2^2t_3^2:t_1^3t_2t_3^2)\\
&= (t_1^2t_2(t_3^2+t_2^2):t_3^2(t_2^3+t_1^2t_3):t_1t_2^2t_3^2:t_1^2t_2t_3^2),
\end{align*}
where the second line factors out the common factor $t_1$.  Thus $\deg(\cP) = 5 < 3\cdot 2 = \deg(\cQ) \,\deg(\cS)$.  Furthermore,
\begin{align*}
&\betap = \deg(\cP)^2 - \deg(\myS) \, \mapdeg(P) = 5^2 - 5 \cdot 1 = 20\\
&\deg(\cS)^2 \betaq+\deg(\myS)\, \mapdeg(\cQ) \,\betas =
2^2 \cdot 4 + 5 \cdot 1 \cdot 3 = 31.
\end{align*}
This shows that when $\myB(\cQ) \ne \emptyset$, the inequalities in Proposition \ref{prop-ineq} can be strict.}
\end{example}

\para

The next theorem extends Corollary \ref{cor-betapqs} (1) using the curves from \eqref{eq-K1K2g} and \eqref{eq-J1J2}.

\para

\begin{theorem}\label{T-propertyP2} If  $\myB(\cQ)=\emptyset$, then  $\betap=\deg(\myS) \, \mapdeg(\cQ) \, \betas$. Furthermore,
\[\Content_{\{ \ox\}}(\Res_{t_3}(K_{1}^{L_\cP},K_{2}^{L_\cP}))=\Content_{\{ \ox\}}(\Res_{t_3}(J_{1}^{L_\cS}, J_{2}^{L_\cS}))^{\deg(\myS) \, \mapdeg(\cQ)}.\]
       where
$L_\cP$ belongs to the open set introduced in Theorem \ref{theorem-BetaP-K1K2} and $L_\cS$ belongs to the open set introduced in Theorem \ref{theorem-BetaP-J1J2}.
\end{theorem}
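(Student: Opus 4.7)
For Part 1 the argument is immediate: the hypothesis $\myB(\cQ)=\emptyset$ gives $\betaq=0$, and Theorem \ref{T-propertyP0} gives $\gcd(p_1,\ldots,p_4)=1$, so Theorem \ref{T-propertyP1}(3) directly yields $\betap=\deg(\myS)\,\mapdeg(\cQ)\,\betas$.

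For Part 2 the plan is to exhibit both contents, up to a nonzero scalar, as products of the same linear forms attached to base points, with the multiplicities differing by the factor $\deg(\myS)\,\mapdeg(\cQ)$. By Theorem 5.3 of \cite{Walker} combined with Proposition \ref{prop-HSmult}, and because the $\ox$-independent factors of the resultant come exactly from the intersection points in $\proj2$ (the base points), I would obtain
\[
\Content_{\ox}(\Res_{t_3}(K_1^{L_\cP},K_2^{L_\cP}))\;\doteq\;\prod_{A\in\myB(\cP)}\ell_A(t_1,t_2)^{e_A(\cP)},
\]
with $\ell_A(t_1,t_2):=b_At_1-a_At_2$ for $A=(a_A{:}b_A{:}c_A)$, $e_A(\cP)$ the Hilbert-Samuel multiplicity, and $\doteq$ meaning equality up to a nonzero scalar; the analogous identity holds for the $\cS$-content. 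From the definitions one checks that $\myB(\cP)=\myB(\cS)$: if $A\in\myB(\cS)$ then each $p_i(A)=q_i(\mathbf{0})=0$ by homogeneity of the $q_i$; conversely, if $A\in\myB(\cP)$ and $\cS(A)\ne\mathbf{0}$, then $\cS(A)\in\myB(\cQ)=\emptyset$, a contradiction.

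The decisive step, and the main obstacle, is the per-point identity $e_A(\cP)=\deg(\myS)\,\mapdeg(\cQ)\,e_A(\cS)$ for every $A\in\myB(\cP)=\myB(\cS)$, which the global sum of Part 1 is too coarse to give. Setting $d=\deg(\cQ)$, Theorem \ref{theorem-formulaP} applied to $\cQ$ (for which $\betaq=0$) gives $d^2=\deg(\myS)\,\mapdeg(\cQ)$, so it suffices to prove $e_A(\cP)=d^2\, e_A(\cS)$. I would work in the $2$-dimensional regular local ring $R_A=\mathcal{O}_{\proj2,A}$, set $J_A=(\tilde s_1,\tilde s_2,\tilde s_3)$ and $I_A=(q_1(\tilde s),\ldots,q_4(\tilde s))$, and establish that $I_A$ is a reduction of $J_A^d$; the standard theory of reductions then yields $e(I_A)=e(J_A^d)=d^2\, e(J_A)$. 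The inclusion $I_A\subseteq J_A^d$ is immediate from each $q_i$ being homogeneous of degree $d$. For the reverse reduction identity $J_A^{d(n+1)}\subseteq I_A\cdot J_A^{dn}$ for $n\gg 0$, I would invoke $\myB(\cQ)=\emptyset$ and the Nullstellensatz to conclude that the graded ring $\K[u_1,u_2,u_3]/(q_1,\ldots,q_4)$ is Artinian, hence vanishes in all degrees above some $M$; for $d(n+1)\ge M$, every monomial $u^\beta$ of degree $d(n+1)$ lies in $\sum_i q_i\cdot\K[u]_{dn}$, and specializing $u_j\mapsto\tilde s_j$ gives $\tilde s^\beta\in I_A\cdot J_A^{dn}$. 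Combining the content factorization, the identification $\myB(\cP)=\myB(\cS)$, and the per-point multiplicity identity then yields the claimed polynomial equation.
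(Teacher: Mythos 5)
Your proof is correct, and in one essential respect it is \emph{more} rigorous than the paper's own argument. The paper proves Part~1 exactly as you do (via Corollary~\ref{cor-betapqs}(1), which rests on Theorems~\ref{T-propertyP0} and~\ref{T-propertyP1}). For Part~2, however, the paper argues only that $\myB(\cP)=\myB(\cS)$ and that $\deg_{\ot}(R_\cP)=\betap=\deg(\myS)\,\mapdeg(\cQ)\,\betas=\deg(\myS)\,\mapdeg(\cQ)\,\deg_{\ot}(R_\cS)$, and from there asserts that $R_\cP=R_\cS^{\alpha}$ for some exponent $\alpha$, which the degree count then pins down. That assertion is exactly what you single out as ``the decisive step, and the main obstacle'': having the same base points shows only that $R_\cP$ and $R_\cS$ have the same linear factors, not that the exponent of every factor in $R_\cP$ is the \emph{same} multiple of its exponent in $R_\cS$. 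Your proof closes that gap by establishing the per-point identity $e_A(\cP)=\deg(\cQ)^2\,e_A(\cS)$ for each $A\in\myB(\cP)=\myB(\cS)$: you show, using $\myB(\cQ)=\emptyset$ and the Nullstellensatz, that $I_A=(q_1(\tilde s),\ldots,q_4(\tilde s))$ is a reduction of $J_A^{\deg(\cQ)}$ in the two-dimensional regular local ring $R_A$, and then invoke $e(J_A^{\deg(\cQ)})=\deg(\cQ)^2\,e(J_A)$. Combined with $\deg(\cQ)^2=\deg(\myS)\,\mapdeg(\cQ)$ (from Theorem~\ref{theorem-formulaP} with $\betaq=0$) and the identification of the content as $\prod_A \ell_A^{e_A}$ up to a scalar, this yields the claimed power relation factor by factor. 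In short, you reach the same conclusion, but by a genuinely different route: a local commutative-algebra argument (reduction ideals at each base point) rather than the paper's global degree-counting shortcut; your version is the one that actually proves the power identity rather than assuming its shape.
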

\begin{proof}
By Corollary \ref{cor-betapqs} (1), $\betap=\deg(\myS) \, \mapdeg(\cQ) \, \betas$. Now let us prove that $R_{\cP}=R_\cS^{\deg(\myS) \, \mapdeg(\cQ)}$, where
\[
\begin{array}{l} R_{\cP}(t_1, t_2):=\Content_{\{ \ox\}}(\Res_{t_3}(K_{1}^{L_\cP},K_{2}^{L_\cP})) \\
\noalign{\medskip}
R_\cS(t_1, t_2):=\Content_{\{ \ox\}}(\Res_{t_3}(J_{1}^{L_\cS}, J_{2}^{L_\cS})).
\end{array}
\]
       Indeed, by Theorems \ref{theorem-BetaP-K1K2} and Theorem \ref{theorem-BetaP-J1J2}, we know that
       $\deg_{\ot}(R_\cP)=\betap$ and $\deg_{\ot}(R_\cS)=\betas$. On the other hand, recall that $p_i=q_i(s_1,s_2,s_3)$ for $i=1,\ldots,4$. Reasoning as in the proof of Theorem \ref{T-propertyP0}, we  see that every base point of $\cP$ is a base point of $\cS$ (remember that $\betaq=0$). Furthermore, it is clear that every base point of $\cS$ is a base point of $\cP$ ($q_i(0,0,0)=0$). Then $R_{\cP}=R_\cS^\alpha$ for some exponent $\alpha$,  and since  $\betap=\deg(\myS) \, \mapdeg(\cQ)\, \betas$, we conclude that $R_{\cP}=R_\cS^{\deg(\myS) \, \mapdeg(\cQ)}$.
\end{proof}



\appendix

\section{Some underlying algebra and geometry}

In this appendix, we discuss the algebra and geometry behind Theorem \ref{theorem-formulaP}, which we write in the form
\begin{equation}\label{eq-degformula}
\deg(\cP)^2 = \betap + \deg(\myS)\cdot \mapdeg(\cP).
\end{equation}
Our approach in this appendix, based on \cite{CoxWebPage}, is intuitive and non-rigorous.

The polynomials $W_1$ and $W_2$ defined in \eqref{eq-W} are linear combinations of the parametrization $\cP = (p_1,\dots,p_4)$ with coefficients given by new variables $x_1,\dots,x_4$ and $y_1,\dots,y_4$.  For the time being, we will regard the $x_i$ and $y_i$ as generic elements of the base field $\Bbb{K}$.  Later in the discussion, they will resume their role as independent variables.

With this convention, $W_1$ and $W_2$ define curves in $\Bbb{P}^2(\Bbb{K})$.  By B\'ezout's Theorem, their points of intersection, counted with multiplicity, add up to $\deg(\cP)^2$.  This is the left-hand side of \eqref{eq-degformula}.

Intersection points of the curves $\myC(W_1)$ and $\myC(W_2)$ come in two flavours:
\begin{itemize}
\item The $p_i$ all vanish at the base points $\myB(\cP)$, so the same is true for $W_1$ and $W_2$.  Hence $\myC(W_1)$ and $\myC(W_2)$ always intersect at the base points.  These are always the same, no matter how we choose $x_i$ and $y_i$.
\item The remaining points in $\myC(W_1)\cap\myC(W_2)$ depend on the choice of $x_i$ and $y_i$.
\end{itemize}
Let us explain how these two flavours contribute the right-hand side of \eqref{eq-degformula}:
\begin{itemize}
\item In the notation of Proposition \ref{prop-HSmult}, a base point $A \in \myB(\cP)$ contributes $\dim_{\Bbb{K}} R_{A}/\langle \widetilde{W}_1,\widetilde{W}_2\rangle$ to B\'ezout's Theorem.  As noted in the proof, this equals the Hilbert-Samuel multiplicity $e(I_A,R_A)$.  Summing these up, we see that the base points contribute $\betap$ to  B\'ezout's Theorem, which explains the first summand on the right-hand side of \eqref{eq-degformula}.

The key point here is that to compute $e(I_A,R_A)$, we replace $I_A$ with a reduction ideal (see \cite[4.6]{BH}).  Since $R_A$ has dimension two, the reduction ideal is generated by two generic linear combinations of the generators of $I_A$.  From the point of view of commutative algebra, this explains why we work with $W_1$ and $W_2$.
\item For the remaining points of intersection, consider the surface $\myS$ parametrized by $\cP$.  Its degree $\deg(\myS)$ is the number of points where a generic line intersects $\myS$.  This line is the intersection of two generic planes $H_1$ and $H_2$.  For homogeneous coordinates $u_1,\dots,u_4$ of $\Bbb{P}^3(\Bbb{K})$, we can let $H_1 = \myC(\sum_{i=1}^4 x_i u_i)$ and $H_2 = \myC(\sum_{i=1}^4 y_i u_i)$ since $x_i$ and $y_i$ are generic.  Via the parametrization $\cP$, the curves $H_1 \cap \myS$ and $H_2 \cap \myS$ on $\myS$ pull back to $\myC(W_1)$ and $\myC(W_2)$ in $\Bbb{P}^2(\Bbb{K})$.  From the point of view of geometry, this explains why we work with $W_1$ and $W_2$.

Since $H_1 \cap H_2$ is generic, we can assume that $H_1 \cap H_2$ meets $\myS$ transversely at $\deg(\myS)$ smooth points of $\myS$ and that $\mapdeg(\cP)$ points of $\Bbb{P}^2(\Bbb{K})$ map to each point of $H_1 \cap H_2 \cap \myS$.  This gives $\deg(\myS)\cdot \mapdeg(\cP)$ points of $\Bbb{P}^2(\Bbb{K})$, all contained in $\myC(W_1)\cap\myC(W_2)$ by our choice of $H_1$ and $H_2$.  Genericity implies that $\cP$ is \'etale at these points (i.e., the Jacobian has maximal rank).  When combined with transversality, it follows that each point contributes 1 to B\'ezout's Theorem.  This explains the second summand on the right-hand side of \eqref{eq-degformula}.
\end{itemize}

It remains to explain how this relates to the resultants that appear in the body of the paper.   We begin with the proof of B\'ezout's Theorem from \cite[Chapter 8, \secsym 7]{Cox2015}.  A coordinate change $L \in \myG$ gives the polynomials $W_{1}^{L},W_{2}^{L}$ from \eqref{eq-WL}.  The basic idea of the proof is that if $L$ is sufficiently generic, then the resultant $\Res_{t_3}(W_{1}^{L},W_{2}^{L})$ is a homogeneous polynomial in $t_1,t_2$ whose irreducible factors correspond to the points of intersection and whose exponents give the corresponding multiplicities.  Since the resultant has degree $\deg(\cP)^2$, this proves B\'ezout's Theorem.

So far, $\ox = (x_1,\dots,x_4)$ and $\oy = (y_1,\dots,y_4)$ have been generic elements of $\Bbb{K}$.  But now let them return to being independent variables.  Then $\Res_{t_3}(W_{1}^{L},W_{2}^{L})$ is a polynomial in $t_1,t_2,\ox,\oy$.  Thinking in terms of $\ox,\oy$, we have a factorization
\[
\Res_{t_3}(W_{1}^{L},W_{2}^{L}) = \content_{\{\ox,\oy\}}(\Res_{t_3}(W_{1}^{L},W_{2}^{L})) \cdot \Primpart_{\{\ox,\oy\}}(\Res_{t_3}(W_{1}^{L},W_{2}^{L})).
\]

The first factor is polynomial in $t_1,t_2$ only, while the second also depends on $\ox,\oy$.  Recall that when the curves intersect, the base points give intersection points that are independent of $\ox,\oy$.  Since the resultant takes multiplicities into account, this suggests that
\[
\betap = \deg_{\ot} (\content_{\{\ox,\oy\}}(\Res_{t_3}(W_{1}^{L},W_{2}^{L}))),
\]
which is proved in Theorem \ref{theorem-betaW1LW2L} for any $L \in \myG$.

To complete the proof of the degree formula \eqref{eq-degformula}, it remains to show that
\[
\deg(\myS)\cdot \mapdeg(\cP) = \deg_{\ot} (\Primpart_{\{\ox,\oy\}}(\Res_{t_3}(W_{1}^{L},W_{2}^{L}))),
\]
This is more challenging, since the line $H_1 \cap H_2$ has to be chosen carefully to meet the surface $\myS$ transversely.  In the body of the paper, we do this in two steps.  The first is the substitution
\[
(x_1,x_2,x_3,x_4,y_1,y_2,y_3,y_4) \to (x_4, 0, 0, -x_1, 0, 0, x_4, -x_3),
\]
which turns $W_1,W_2$ into $K_1,K_2$. The second step applies a carefully chosen $L \in \myG$ that does not affect the content and provides the needed transversality.  The result is
\[
\underbrace{\Res_{t_3}(K_{1}^{L},K_{2}^{L})}_{\begin{array}{l} \text{The degree}\\
\text{is } \deg(\cP)^2 \end{array}}
\ = \ \underbrace{\content_{\ox}(\Res_{t_3}(K_{1}^{L},K_{2}^{L}))}_{\begin{array}{l} \text{By Theorem \ref{theorem-BetaP-K1K2}, the}\\
\text{degree is } \betap \end{array}} \cdot  \underbrace{\Primpart_{\ox}(\Res_{t_3}(K_{1}^{L},K_{2}^{L}))}_{\begin{array}{l} \text{By Lemma \ref{lemma-formula}, the degree}\\
\text{is } \deg(\myS)\hskip1pt \mapdeg(\cP) \end{array}},
\]
where ``degree'' means ``degree in $\ot$''.
Notice how the careful choice of $L \in \myG$ described in Lemma \ref{lemma-formula}  involves the gradients needed to prove transversality.

It follows that the proof of \eqref{eq-degformula} given in Theorem~\ref{theorem-formulaP} is consistent with the argument from \cite{CoxWebPage} sketched in this appendix, though the proof of Theorem~\ref{theorem-formulaP} was discovered independently.

\section*{Acknowledgements}


This work has been partially supported by FEDER/Ministerio de Ciencia, Innovación y Universidades-Agencia Estatal de Investigación/MTM2017-88796-P (Symbolic Computation:\
new challenges in Algebra and Geometry together with its applications).

\para The second and third authors belong to the Research Group ASYNACS (Ref.\ CT-CE2019/683).

\end{document}